\renewcommand*{\backref}[1]{}
\renewcommand*{\backrefalt}[4]{
  \ifcase #1 %
   [No citations.]%
  \or
   [#2]%
  \else
   [#2]%
  \fi
}
\let\oldmarginpar\marginpar
\renewcommand\marginpar[1]{\oldmarginpar[\raggedleft\footnotesize #1]%
{\raggedright\footnotesize #1}}
\newcommand{\RR}{{\mathbb{R}}}
\newcommand{\ZZ}{{\mathbb{Z}}}
\newcommand{\bdy}{{\partial}}
\newcommand{\vol}{{\mathcal{V}}}
\newcommand{\ang}{{\mathcal{A}}}
\newcommand{\lad}{{\mathcal{L}}}
\newcommand{\eps}{{\varepsilon}}
\newcommand{\ignore}[1]{}
\newtheorem{theorem}{Theorem}[section]
\newtheorem{proposition}[theorem]{Proposition}
\newtheorem{lemma}[theorem]{Lemma}
\newtheorem{claim}[theorem]{Claim}
\theoremstyle{definition}
\newtheorem{define}[theorem]{Definition}
\newtheorem{remark}[theorem]{Remark}
\newtheorem{example}[theorem]{Example}
\newtheorem{observation}[theorem]{Observation}
\numberwithin{equation}{section}
\title{Explicit angle structures for veering triangulations}
\subjclass[2000]{57M50, 57R05}
\author{David Futer}
\address{Department of Mathematics, Temple University,
Philadelphia, PA 19122, USA}
\email{dfuter@temple.edu}
\thanks{Futer is supported in part by NSF Grant No. DMS--1007221.}
\author{Fran\c{c}ois Gu\'eritaud}
\address{Laboratoire Paul Painlev\'e, CNRS UMR 8524, Universit\'e de Lille 1, 59650 Ville\-neuve d'Ascq, France}
\email{Francois.Gueritaud@math.univ-lille1.fr}
\thanks{Gu\'eritaud is supported in part by the ANR program ETTT (ANR-09-BLAN-0116-01).}
\date{\today}
\begin{document}

\maketitle

\begin{abstract}
Agol recently introduced the notion of a \emph{veering} triangulation, and showed that such triangulations naturally arise as layered triangulations of fibered hyperbolic $3$--manifolds. We prove, by a constructive argument, that every veering triangulation admits positive angle structures, recovering a result of Hodgson, Rubinstein, Segerman, and Tillmann. Our construction leads to explicit lower bounds on the smallest angle in this positive angle structure, and to information about angled holonomy of the boundary tori.
 \end{abstract}

\section{Introduction}

Let $\overline{M}$ be a compact oriented 3-manifold whose boundary $ \partial \overline{M}$ is a nonempty union of tori. Suppose $\overline{M}$ is decomposed into finitely many truncated tetrahedra $\Delta_i$, glued in pairs along all their hexagonal faces. The \emph{boundary triangles} or \emph{cusp triangles} obtained by truncating tips of the tetrahedra remain unglued, and give a (possibly non-simplicial) triangulation of $\partial M$. We then say that $\tau = \{ \Delta_1, \ldots \Delta_n \} $ determines an \emph{ideal triangulation} of the noncompact manifold $M=\overline{M}\smallsetminus \partial \overline{M}$ (by removing the cusp triangles). Note that $ \partial \overline{M}$ has two types of edges: cusp edges (contained in $\partial \overline{M}$) and interior edges (contained in $M$).


\begin{define} \label{def:anglestructure}
We will assign a real variable $\theta_j$, called an \emph{angle}, to every pair of opposite edges of every tetrahedron. Thus there are $3n$  variables for the $n$ tetrahedra.  
The angles associated to $\Delta_i$ are $ \theta_{3i-2}, \theta_{3i-1}, \theta_{3i}$.
We impose the following system of equations:
\begin{enumerate}
\item\label{eq:tetrahedron} For each tetrahedron $\Delta_i$, the angle sum is $\theta_{3i-2} + \theta_{3i-1}+ \theta_{3i}=\pi$.
\item\label{eq:edge} For each (interior) edge of $M$ surrounded by angles $\theta_{j_1}, \dots, \theta_{j_s}$, one has $\sum \theta_{j_i} = 2\pi$.
\end{enumerate}
An angle vector $(\theta) = (\theta_1, \ldots, \theta_{3n})$ satisfying equations \eqref{eq:tetrahedron} and \eqref{eq:edge} is called: 
\begin{itemize}
\item a \emph{generalized angle structure} on $\tau$ if  $\theta_j \in \RR$ for all $j$,
\item a \emph{taut angle structure} on $\tau$ if $\theta_j \in \{0,\pi\}$ for all $j$,
\item a \emph{positive angle structure} or simply an \emph{angle structure} on $\tau$  if $\theta_j > 0$ for all $j$. Note that by \eqref{eq:tetrahedron}, this implies $\theta_j \in (0,\pi)$ for all $j$.
\end{itemize}
\end{define}

These related definitions are  ordered according to (roughly) increasing strength. 
Casson, Luo, and Tillmann showed that every ideal triangulation of a manifold with torus boundary admits a generalized angle structure \cite[Theorem 1]{luo-tillmann}. If $M$ is irreducible and acylindrical, Lackenby showed that \emph{some} ideal triangulation of $M$ admits a taut angle structure \cite{lackenby:taut}. On the other hand, positive angle structures are rarer and more powerful: Casson and Lackenby showed that if an ideal triangulation of $M$ admits angle structures, then $M$ must carry a complete hyperbolic metric.
See \cite[Corollary 4.6]{lackenby:surgery} and \cite[Theorem 1.2]{fg:arborescent}.

The theme of this paper is to deform a taut angle structure into a (positive) angle structure, with explicit information about the angles.

\medskip

A taut angle structure endows every boundary  triangle in the tiling of $\partial \overline{M}$ with a triple of angles $(0,0,\pi)$. Therefore, every vertex $v$ in the tiling of $\partial \overline{M}$ has a link consisting of two $\pi$-angles, also called \emph{wide} angles, separated by two (possibly empty) sequences of $0$-angles, also called \emph{thin} angles. These two sequences of thin angles are called the two \emph{fans} adjacent to $v$. \emph{Wide} and \emph{thin} will always refer to the same angles at $v$, even when we will start assigning them other values than $0, \pi$.

A nice source of taut angle structures comes from \emph{layered} triangulations, which are constructed in the following way. Let $S$ be a surface with punctures and $\rho$ an ideal triangulation of $S$. Let $\phi:S\rightarrow S$ be a pseudo--Anosov diffeomorphism, and suppose $M$ is the mapping torus $M=(S\times [0,1])/\!\sim_\phi$,  where $(x,1)\sim_\phi(\phi(x), 0)$. Find a path from the triangulation $\rho$ to its pushforward $\phi_*(\rho)$, via a sequence of \emph{diagonal exchanges}. Each such diagonal exchange can be seen as a flattened tetrahedron, with angles of $\pi$ on the exchanged diagonals and $0$ on the periphery. The union of these tetrahedra gives a taut ideal triangulation of $M$. 

The taut angle structure on a layered triangulation admits an additional global property: 
a coherent choice of transverse orientation  $\sigma$ on the $2$--skeleton, such that for each tetrahedron, the two faces sharing one $\pi$-angle have $\sigma$ pointing inward, and the two faces sharing the other $\pi$-angle have $\sigma$ pointing outward. We call this property \emph{transverse--taut}.\footnote{Following Lackenby \cite{lackenby:taut}, most authors call \emph{taut} what we here call \emph{transverse--taut}, as this notion is inspired by that of a taut foliation. 
By contrast, a taut angle structure can be called \emph{angle--taut}  for short.}
It is not hard to see that if a taut angle structure is not transverse--taut, it must have a transverse--taut double cover; see Lemma \ref{lemma:transverse-cover}. To summarize the implications, 
 $$\text{layered}\Longrightarrow\text{transverse--taut}\Longrightarrow\text{taut angle structure} .$$

Agol recently introduced the following notion  \cite{agol}.
\begin{define} \label{def:veering}
A taut angle structure is \emph{veering} if for every vertex $v$ of $\partial \overline{M}$, either all the triangles of the fans of $v$ have their $\pi$--angle immediately \emph{before} $v$ in the counterclockwise cyclic order, or all the triangles of the two fans of $v$ have their $\pi$-angle immediately \emph{after} $v$ in the counterclockwise cyclic order. We say $v$ is \emph{left--veering} in the first case, \emph{right--veering} in the second. A veering taut angle structure will be called a \emph{veering structure} for short. We note that this definition uses the orientation on $M$ in an essential way.
\end{define}

See Figure \ref{fig:veeringvertices} (ignoring the colors and labels for the moment) for the two types of vertices in $\partial \overline{M}$ 
(left-veering is left).
Notice that every fan is nonempty: for, if $v$ had an empty fan, then the two wide triangles incident to $v$ would have another common vertex $w$ that would be neither left- nor right-veering. In other words, the rightmost fan of Figure \ref{fig:veeringvertices} has the minimal number of triangles, namely 1. We will call a fan \emph{short} if it contains just one triangle, and \emph{long} otherwise.

Notice also that if $e=vv'$ is an interior edge of $M$ (not a cusp edge!), the vertices $v$ and $v'$ veer in the same direction: right- or left-veeringness is an intrinsic property of the edge $e$.

\begin{figure}
\begin{center}
\psfrag{v}{$v$}
\psfrag{t}{$T$}
\psfrag{tt}{$T'$}
\psfrag{p}{$P$}
\psfrag{pp}{$P'$}
\psfrag{q}{$Q$}
\includegraphics[width=14cm]{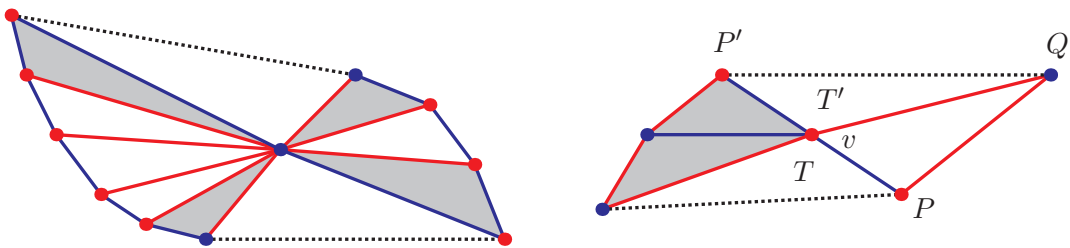}
\caption{Two vertex links in $\partial \overline{M}$ showing fans of 5, 3, 2, and one triangles (the last one a so-called \emph{short} fan). Obtuse angles are $\pi$, acute angles are $0$. The polygon on the left surrounds a left--veering vertex, and the polygon on the right a right--veering vertex. Vertices and edges receive colors (red/blue) from the veeringness condition. Hinge triangles in the fans are shaded.}
\label{fig:veeringvertices}
\end{center}
\end{figure}

Agol proved that veering structures are pleasantly common: in particular, there is a \emph{canonical} layered, veering triangulation on any pseudo-Anosov mapping torus, provided that the singularities of the invariant foliations are punctures. This canonical veering triangulation is in fact produced algorithmically by a standard (weighted) train track splitting procedure.

Shortly after Agol introduced the notion of \emph{veering}, Hodgson, Rubinstein, Segerman, and Tillmann \cite{ozzie} proved

\begin{theorem}[Theorem 1.5 of \cite{ozzie}] \label{thm:main}
Let $\tau$ be a veering ideal triangulation of $M$. Then the veering (taut angle) structure on $\tau$ can be deformed to a positive angle structure.
\end{theorem}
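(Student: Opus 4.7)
The plan is to deform the veering taut angle structure $(\theta^{0})$ to a positive one by producing a tangent vector $\eta\in\RR^{3n}$ that satisfies the homogenized versions of equations (2.1) and (2.2) and is strictly positive on every thin slot. Once such an $\eta$ exists, for $t>0$ sufficiently small the vector $(\theta^{0})+t\eta$ is a positive angle structure: the former zero slots become positive, and the former $\pi$ slots decrease but stay in $(0,\pi)$.

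A preliminary reduction, via Lemma \ref{lemma:transverse-cover}, lets us assume $\tau$ is transverse--taut. If it is not, we work on the transverse--taut double cover $\widetilde\tau$, produce a positive angle structure there, and average over the deck involution to obtain an invariant positive angle structure that descends to $\tau$.

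The main construction will assign explicit positive weights to each thin slot, based on the combinatorial geometry of the fans in the cusp triangulation of $\partial\overline{M}$. Around every interior edge $e$ of $M$, the incident tetrahedra split into two \emph{hinge tetrahedra at $e$} (those whose $\pi$-angle lies on $e$) and the remaining non-hinge tetrahedra. To each non-hinge slot we assign a positive weight; to each wide slot of a tetrahedron we assign the negative of the sum of its two thin weights, automatically satisfying the tetrahedron sum (2.1). The explicit nature of the weights will yield the lower bounds on the smallest angle advertised in the abstract.

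The main obstacle is then the edge equation (2.2): around every interior edge $e$ the assigned weights must sum to zero, a tightly coupled global requirement. To verify it we will exploit the rigid combinatorial pattern that veering forces on the link of $e$: the two hinge tetrahedra at $e$ sit in specific positions within the cyclic sequence of tetrahedra around $e$, and the remaining tetrahedra arrange themselves into two arcs reflecting the two fans at a chosen endpoint of $e$, with short and long fans treated on essentially the same footing. Using this rigidity we will pair positive thin contributions with the hinge negatives slot-by-slot, producing cancellation at every interior edge simultaneously and completing the construction of $\eta$.
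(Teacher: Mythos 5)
Your reduction of the theorem to producing a tangent vector $\eta$ that satisfies the homogenized versions of \eqref{eq:tetrahedron} and \eqref{eq:edge} and is strictly positive on every thin slot is fine, but it is only a reformulation: such an $\eta$ exists if and only if the theorem holds (given a positive structure $(\theta)$, take $\eta=(\theta)-(\theta_{\mathrm{taut}})$). The entire content must therefore lie in the choice of the thin weights and the verification of the edge equations, and this is exactly where your proposal stops. ``Pair positive thin contributions with the hinge negatives slot-by-slot'' is not a mechanism: each tetrahedron's two thin weights enter positively into the equations of its four thin edges and negatively (through its single wide slot) into the equations of the edge(s) carrying its $\pi$-angle, so the linear system is globally coupled and no edge-local pairing is available. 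A quick sanity check shows that no naive choice can work: if all thin weights equal $w>0$, the equation at an interior edge of degree $d$ reads $(d-2)w-4w=0$, forcing every edge to have degree $6$. So the weights must vary from slot to slot in a way governed by the global fan/ladder combinatorics of $\partial\overline{M}$, and nothing in your outline produces them; the sharp quadratic dependence of the attainable angles on the fan length (Theorem \ref{thm:bound}, Remark \ref{rem:ptorus-bound}) is a further sign that any valid weight system is genuinely non-local rather than the result of a slot-by-slot cancellation.

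For comparison, the paper never verifies the edge equations by hand: it deforms only along leading--trailing deformations $D^\gamma$ (Section \ref{sec:lead-trail}), which are tangent to $GAS(\tau)$ by construction, so \eqref{eq:tetrahedron} and \eqref{eq:edge} are preserved automatically. The real work then goes into choosing the curves: the edge loops in Lemma \ref{lemma:hinge-rescue} unflatten all hinge tetrahedra and leave the remaining zeros opposite ladderpole segments, and the fan-following curves of Lemma \ref{lemma:nonhinge-rescue} rescue the remaining zeros one at a time, using the ladder structure and the placement of hinge triangles at the ends of long fans. To salvage your direct approach you would essentially have to reconstruct that combinatorial input (for instance, weights depending on the distance along a fan to the nearest hinge, as in Section \ref{sec:lowerbound}); as written, the key step is missing. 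Also note that your preliminary passage to the transverse--taut double cover is unnecessary for this theorem --- the paper needs it only for the quantitative bound of Theorem \ref{thm:bound}.
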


The proof in \cite{ozzie} is non-explicit in that it uses duality in linear programming. Namely, the linear problem of finding positive $(\theta_j)$ has a so-called \emph{dual} linear problem, which by work of Rivin \cite{rivin:combinatorial}, Kang--Rubinstein \cite{kang-rubinstein:taut-angle}, Luo--Tillmann \cite{luo-tillmann}, and others can be reduced to checking the absence of certain types of normal surfaces in the triangulation of $M$. Our aim in this paper is to give a constructive proof of Theorem \ref{thm:main},
which provides effective information about the angles.
 In fact, we show the following.

\begin{theorem}\label{thm:bound}
Let $\tau$ be a veering ideal triangulation of $M$. 
Then $\tau$ admits a positive angle structure $(\theta)$, whose angles satisfy
$$\theta_i \: \geq \:  \frac{\pi}{12 \,  d_{\max}^2} \: \geq \:  \frac{\pi}{12 \,  (e_{\max} - 3)^2},$$
where $d_{\max}$ is the maximum number of triangles in a fan of any vertex $v \in \bdy \overline{M}$, and $e_{\max} $ the largest degree of an edge of $\tau$.
\end{theorem}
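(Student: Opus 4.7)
The plan is to deform the taut veering structure $T = (\theta^0_j) \in \{0,\pi\}^{3n}$ inside the affine variety of generalized angle structures. Denote by $W \subset \RR^{3n}$ its tangent space, i.e.\ vectors $w$ satisfying $w_{3i-2}+w_{3i-1}+w_{3i}=0$ on every tetrahedron and $\sum_{j \in e} w_j = 0$ around every interior edge $e$. It suffices to exhibit $w \in W$ with $w_j > 0$ whenever $\theta^0_j = 0$: then $w_j < 0$ at every wide angle by the tetrahedron equation, so $T + tw$ is a positive angle structure for all $t \in (0,\pi/\max_j|w_j|)$, and the quantitative bound will follow by optimizing over $t$.

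By Lemma \ref{lemma:transverse-cover} I may assume $\tau$ is transverse--taut after passing to a double cover; any positive structure on the cover descends by averaging over the deck involution. The veering condition (Definition \ref{def:veering}) equips every interior edge with a color (red/blue) and partitions the triangles of each fan at a vertex $v\in\bdy \overline M$ into \emph{hinge} triangles (at the ends of the fan, where adjacent edges change color, as shaded in Figure \ref{fig:veeringvertices}) and non-hinge triangles (interior to a monochromatic run). I would define $w_j$ at each thin angle as an explicit function of the position and type (hinge vs.\ non-hinge) of its cusp triangle within the fan containing it, with positive values controlled from below and above by explicit quantities depending on fan length; the tetrahedron equation then dictates $w_j$ at the two wide angles.

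The technical heart is verifying $\sum_{j\in e} w_j = 0$ around each interior edge $e = vv'$. Both endpoints $v,v'$ veer in the same direction, and the link of $e$ in $\bdy \overline M$ traces a polygonal path that visits a controlled sequence of fans around $v$, around $v'$, and around the other boundary vertices it meets. I would aim to group the thin contributions around $e$ fan by fan, so that within each fan the weights cancel in coordinated pairs, with boundary terms at the hinge triangles being absorbed by the two wide contributions at $v$ and $v'$. The left/right consistency built into the veering condition is exactly what aligns these cancellations at the two ends of $e$; the separate case analysis for hinge vs.\ non-hinge triangles, and for short fans (of length one) vs.\ long ones, is where I expect the main obstacle to lie, since the formula for $w_j$ must be chosen very precisely for all these edge sums to close up simultaneously.

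Once $w$ is constructed, the quantitative bound follows from explicit control on its entries: $\min_{\theta^0_j = 0} w_j$ and $\max_j |w_j|$ can each be estimated polynomially in $d_{\max}$, and a routine optimization over the scalar $t$ produces the stated lower bound $\pi/(12 d_{\max}^2)$. The inequality $d_{\max} \leq e_{\max} - 3$ is a direct combinatorial count in the veering triangulation: an interior edge incident to a vertex at the wide end of a maximal fan of length $d_{\max}$ is surrounded by at least the $d_{\max}$ fan triangles together with a small constant number of further triangles coming from the adjacent hinge and wide configurations.
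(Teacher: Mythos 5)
Your overall frame is sound but only at the level of strategy: reducing to the transverse--taut case by Lemma \ref{lemma:transverse-cover} and averaging over the deck involution matches the paper, and it is true that it suffices to produce a tangent vector $w$ to the space of generalized angle structures that is positive at the thin angles, with the quantitative bound coming from the ratio of $\min w_j$ at the zeros to $\max_j|w_j|$. The genuine gap is that you never construct $w$, and the way you propose to do it is ill-posed. You want to ``define $w_j$ at each thin angle as an explicit function of the position and type of its cusp triangle within the fan containing it,'' but a single angle variable $\theta_j$ (a pair of opposite edges of a tetrahedron) appears in \emph{all four} cusp triangles of that tetrahedron, and under the transverse--taut structure these lie two in ascending and two in descending ladders, in four different fans with unrelated positions (Observation \ref{obs:transverse}). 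So a per-corner prescription does not define a vector in $\RR^{3n}$ at all without a consistency argument, and verifying the interior-edge equations $\sum_{j\in e} w_j=0$ directly --- which you yourself flag as ``where the main obstacle lies'' --- is precisely the content of the theorem, not a technical afterthought. The same applies to the quantitative step: asserting that $\min_{\theta^0_j=0} w_j$ and $\max_j|w_j|$ are ``estimated polynomially in $d_{\max}$'' with ratio of order $1/d_{\max}^2$ is exactly what must be proved, and nothing in your outline forces it.

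The paper sidesteps the edge equations entirely by building the deformation only out of leading--trailing vectors $D^\gamma$ (Section \ref{sec:lead-trail}), which lie in the tangent space automatically. Concretely (Proposition \ref{prop:transverse-bound}): first apply the edge deformations of Lemma \ref{lemma:hinge-rescue} with $t=\pi/6$, making hinge cusp triangles equilateral and non-hinge ones $(\tfrac{\pi}{3},\tfrac{2\pi}{3},0)$; then, for every cusp triangle $T$ in an ascending ladder, run a null-homotopic curve $\gamma_T$ from the adjacent descending ladder down $H(T)$ rungs to the nearest hinge and back, and add $\sum_T \frac{\kappa}{d_{\max}^2}H(\sigma(T))\,D^{\gamma_T}$ with $\kappa=\pi/24$, where $\sigma(T)$ is the \emph{other} ascending cusp triangle of the same tetrahedron. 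The cross-fan consistency that your plan hand-waves is carried exactly by the identities $H(\sigma(T_3'))=H(T_1)-1$ and $H(\sigma(T_4'))=H(T_2)-1$, which make the four deformations hitting a flat angle sum to at least $2\kappa/d_{\max}^2$, while the bound ``at most $2d$ curves cross any cusp triangle, each with weight at most $\kappa d/d_{\max}^2$'' keeps the formerly positive angles above $64\kappa/d_{\max}^2$. Without an analogue of this bookkeeping tying together the four cusp triangles of each tetrahedron, your proposed $w$ neither exists as stated nor yields the claimed $\pi/(12 d_{\max}^2)$. (Your closing count giving $e_{\max}\geq d_{\max}+3$ is essentially the paper's and is fine.)
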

 The quadratic dependence on $d_{\max}$ or $e_{\max}$ is sharp. On the other hand, the constant $\pi/12$ is very likely not sharp. See Remark \ref{rem:ptorus-bound} for more details.

Knowing explicit angles for the triangulation has several benefits. First, it is satisfying to know how to find actual examples, even though linear optimization algorithms can find them efficiently on a computer as soon as no obstruction exists. 

Second, the lower bound of Theorem \ref{thm:bound} 
is useful for algorithms in computational topology (see \emph{e.g.}\ \cite{cooper-tillmann, lackenby:heegaard-alg}). This is because known lower bounds on the angles dramatically reduce the search time required to enumerate normal surfaces of a particular genus.

Third, our construction endows every cusp with a canonical \emph{slope} attached to the veering structure. This invariant seems not to have been pointed out before.

Fourth, our construction produces positive angle structures with zero angular holonomy. That is, the turning angle about every embedded essential curve in $\partial \overline{M}$ is zero;  see Section \ref{sec:holonomies} for definitions. At present it seems to be unknown whether the existence of positive angle structures implies the existence of positive angle structures with vanishing holonomy.

Fifth, we can play further with this idea and exhibit large values of the holonomy (see Section \ref{sec:holonomies}). These values are not always the largest possible, but they still give interesting lower bounds on holonomy that might be useful for Dehn surgery arguments. In the transverse--taut case, we can also produce by deformation some taut angle structures distinct from the initial one (and usually non-veering).

Sixth, an ambitious goal is to find hyperbolic shapes on the tetrahedra $\Delta_i$ that glue up coherently to give the hyperbolic metric on $M$. By the Casson--Rivin program  \cite{rivin:volume, fg:angled-survey}, this amounts to finding a critical point of the \emph{volume functional} $\vol: \ang(\tau) \to \RR$, defined on the space $\ang(\tau)$ of positive angle structures. In practice, proving the existence of a critical point of $\vol$ requires a careful
parametrization of $\ang(\tau)$ and its boundary \cite{gf:punctured-torus}. The explicit deformations described in Sections \ref{sec:rescuing} and \ref{sec:lowerbound} are a step toward this detailed parametrization. 

Recall that the layered, veering triangulations of mapping tori constructed by Agol  are canonically determined by the pseudo--Anosov monodromy \cite{agol}. The existence of a crititcal point of $\vol$ for these triangulations would show a deep interaction between combinatorics (of train tracks, say) and hyperbolic geometry. 

Finally, the combinatorial understanding provided here might help in addressing such questions as:  
\emph{ Is every veering angle structure virtually layered (\emph{i.e.}\ a finite quotient of a layered structure)? Is there a universal bound on the volume of the union of the tetrahedra belonging to a fan?}

\subsection{Organization}
This paper is organized as follows. In Section \ref{sec:observations}, we reformulate Definition \ref{def:veering} in terms of a coloring of the edges, and explore a number of consequences for the triangulation of $\bdy \overline{M}$. In Section \ref{sec:lead-trail}, we recall the notion of \emph{leading--trailing deformations} of a generalized angle structure, naturally associated to closed curves on  $\bdy \overline{M}$. In Section \ref{sec:rescuing}, we use leading--trailing deformations to unflatten a veering taut angle structure on $\tau$ into a positive angle structure, proving Theorem \ref{thm:main}. In Section \ref{sec:lowerbound}, we analyze how far the angles can be unflattened, and prove Theorem \ref{thm:bound}.  Finally, in Section \ref{sec:holonomies}, we explore the possible holonomies of the angle structures on a veering triangulation.

\subsection{Acknowledgements}
The majority of the arguments presented in this paper were discovered during the first author's visit to Universit\'e de Lille in April 2009. At the time, Agol's definition of veering triangulations had not yet been formulated, but in retrospect we were actually studying a special case: namely, 
layered triangulations of ``Penner--Fathi'' mapping tori defined by alternating Dehn twists \cite{fathi}. 
We are grateful to Universit\'e de Lille 1  for its support and hospitality during this visit.

The appearance of \cite{agol} and \cite{ozzie} prompted us to reconsider our construction, and realize that it actually works for all veering structures.  We thank Ian Agol, Craig Hodgson, Hyam Rubinstein, Henry Segerman, and Stephan Tillmann for their stimulating ideas as well as remarks on the draft.

\section{Observations from the cusp}\label{sec:observations}

Agol's definition of veering structures can be reformulated in the following way. Note that Definition \ref{def:veering} implies that the edges of $M$ can be partitioned into two families. We color
the right-veering edges  \emph{red} and the left-veering edges  \emph{blue}.
Hodgson, Rubinstein, Segerman, and Tillmann showed the following characterization of \emph{veering}:

\begin{lemma}[Proposition 1.4 of \cite{ozzie}]\label{ozzie-veering}
A taut angle structure is veering if and only if every tetrahedron $\Delta_i$ can be sent by an orientation-preserving diffeomorphism to the one depicted in Figure \ref{fig:onetetrahedron}:
namely a thickening of the unit square, with 
\begin{itemize}
\item the $\pi$-angles on the diagonals,
\item the rising diagonal in front,
\item the vertical (thin) edges blue, and 
\item the horizontal (thin) edges red.
\end{itemize}
\end{lemma}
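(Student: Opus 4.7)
I would prove the two implications separately.

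For the reverse direction ($\Leftarrow$), the argument is a direct inspection of Figure \ref{fig:onetetrahedron}. At each vertex $V$ of the standard tetrahedron, the cusp triangle sits on the three edges of $\Delta_i$ meeting at $V$, with its $\pi$-corner on the diagonal $\pi$-edge. The planar embedding prescribed by the figure (rising diagonal in front), together with the orientation of the tetrahedron inherited from $M$, forces the counterclockwise cyclic order (viewed from outside $M$) of the three corners around the cusp triangle to place the $\pi$-corner immediately before the blue (vertical) $0$-corner and immediately after the red (horizontal) $0$-corner. Aggregating this contribution over every tetrahedron incident to a fixed cusp vertex then yields the condition of Definition \ref{def:veering}.

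For the forward direction ($\Rightarrow$), I would first extract combinatorial structure from veering. Fix a tetrahedron $\Delta_i$ and a vertex $V$ with $\pi$-edge $e_\pi$ and $0$-edges $e_1, e_2$. The three corners of the cusp triangle at $V$ sit on these three edges in some counterclockwise cyclic order, with the $\pi$-corner at $e_\pi$. Applying Definition \ref{def:veering} at the cusp vertex corresponding to $e_1$ (and similarly for $e_2$), the $0$-edge which immediately follows $e_\pi$ in this cyclic order must be left-veering (blue), and the other right-veering (red). In particular, the two $0$-edges at every vertex of $\Delta_i$ receive opposite colors.

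The four $0$-edges of $\Delta_i$ form an equatorial $4$-cycle, and by the previous observation its coloring alternates; opposite $0$-edges therefore share a color. Hence the six edges of $\Delta_i$ partition into three opposite pairs—the $\pi$-pair, a red pair, and a blue pair—and this labeled combinatorial data, together with the orientation of $\Delta_i$ inherited from $M$, is exactly the data carried by the standard tetrahedron of Figure \ref{fig:onetetrahedron}. The sought orientation-preserving diffeomorphism is thus obtained by matching the three opposite pairs. The main obstacle, and subtlest point, is to check that the cyclic order forced at $V$ by the veering condition really agrees with the cyclic order in Figure \ref{fig:onetetrahedron} (and not its mirror image); this boils down to verifying a single vertex, after which the Klein four-group of order-$2$ rotational symmetries of the standard tetrahedron lets us align the remaining choices without disturbing colors or orientation.
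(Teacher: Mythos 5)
The paper itself gives no proof of this statement: it is quoted directly from Hodgson--Rubinstein--Segerman--Tillmann \cite{ozzie}, so your argument is supplying what the paper omits rather than paralleling an in-text proof. On its own terms your outline is correct. The forward direction's key deductions are right: since every cusp triangle at a vertex $V$ of $\Delta_i$ is a fan triangle of the two cusp vertices lying on the thin edges at $V$, Definition \ref{def:veering} forces the counterclockwise corner order (red, $\pi$, blue) in that triangle, hence the two thin edges at each vertex of $\Delta_i$ get opposite colors; the equatorial $4$--cycle argument then correctly yields the three monochromatic opposite pairs, and your reduction of the remaining ambiguity to a single-vertex handedness check is valid, since the label-preserving symmetries of the tetrahedron form exactly the Klein four-group of $180^\circ$ rotations (all orientation-preserving and transitive on vertices), so the figure and its mirror image are genuinely distinct and one comparison suffices. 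The reverse direction is likewise the right inspection: all fan triangles at a cusp vertex $v$ have their thin corner on the same interior edge, whose single color makes the placement of the $\pi$--corner consistent across both fans. The one soft spot is that the decisive chirality verification is announced but not performed; to close it you should actually record the orientation conventions (the middle panel of Figure \ref{fig:onetetrahedron}, ``seen from $A,B,C$ or $D$'') and check that the counterclockwise order there is indeed (red, $\pi$, blue) rather than its mirror --- equivalently, the statement the paper records later, that with the $\pi$--angle drawn on top the left edge and right vertex are red while the right edge and left vertex are blue. Since this is a finite convention check against the figure and the rest of your argument is sound, I regard the proof as essentially complete.
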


\begin{figure}
\begin{center}
\psfrag{o}{$0$}
\psfrag{p}{$\pi$}
\psfrag{A}{$A$}
\psfrag{B}{$B$}
\psfrag{C}{$C$}
\psfrag{D}{$D$}
\psfrag{S}{$\begin{array}{c}\text{Seen from} \\ A,B,C \text{ or } D\end{array}$}
\psfrag{H}{Hinges}
\psfrag{N}{Non-hinges}
\includegraphics[width=13cm]{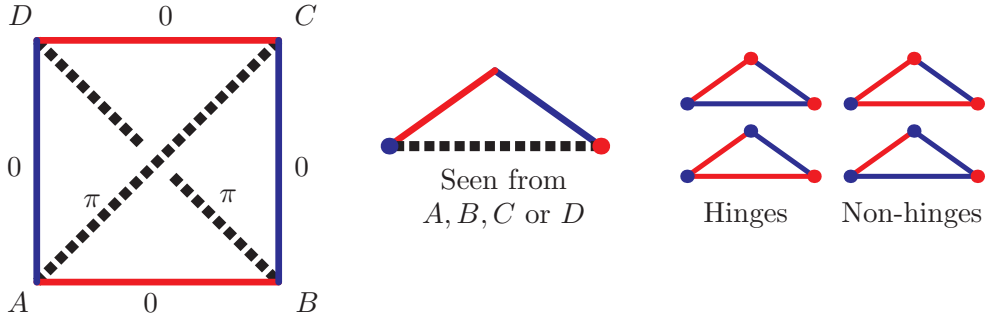}
\caption{A colored tetrahedron (left) and its cusp view (middle). Given a triangle in the cusp view, the colors of its base and top vertex determine its type (hinge or non-hinge). Throughout this paper, the lighter color (printed in greyscale) is red, and the darker color is blue.}
\label{fig:onetetrahedron}
\end{center}
\end{figure}

Note that in this characterization of veeringness, the colors of the two wide diagonals (black in Figure \ref{fig:onetetrahedron}) may be anything: \emph{i.e.}\ they are determined by the adjacent tetrahedra. This induces a partition of the tetrahedra of $M$ into \emph{hinges}, whose diagonals (\emph{i.e.}\ wide angles) bear different colors, and \emph{non-hinges}, whose diagonals are the same color. We are importing this terminology from \cite{gf:punctured-torus}.

Next, we can color all the edges and vertices of the tiling of $\partial \overline{M}$ in red and blue: the color of a vertex $v$ of $\partial \overline{M}$ is the color of the edge in $M$ incident to $v$, while the color of an edge $e \subset \bdy M$ is the color of the \emph{base} of the  ideal triangle in $M$ whose tip was truncated to yield $e$. A consequence of the red/blue characterization of veeringness is that for each triangle $T$ of $\partial \overline{M}$, if we draw the $\pi$-angle on top,
 then the left edge and right vertex are red, while the right edge and left vertex are blue. The bottom edge and top vertex could be any color (determined by some adjacent triangle $T'$); these two colors disagree if and only if $T$ is a truncation of a hinge tetrahedron (called a hinge triangle). Note that hingeness is an inherent property of the tetrahedron, inherited by all four of its boundary triangles. See Figure \ref{fig:onetetrahedron}, right.

We can now revisit Figure \ref{fig:veeringvertices} and apply colors according to the rule above. Notice that 
all vertices and edges in the figure receive a determined color, except for the bases of the two triangles that have wide angles at $v$. We can next make a series of observations:




\begin{observation}\label{obs:two-neighbors}
The vertex $v$ is connected to precisely two vertices $P,P'$ with the same color as $v$ (belonging to the two wide triangles incident to $v$). In the cyclic order for the neighbors of $v$, these two vertices are not consecutive, because each fan is nonempty. Therefore, if $v$ is for example red, then $v$ has blue neighbors on both sides (in both fans).
\end{observation}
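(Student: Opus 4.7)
My plan is to walk around $v$ in $\partial\overline{M}$ and read off the colors of all its neighbors. By symmetry of the two veering types, suppose $v$ is red. Then the coloring rule just established forces $v$ to be the bottom-right vertex of every thin triangle at $v$ and the top vertex of both wide triangles. Denote the cyclic CCW order of triangles at $v$ in $\partial\overline{M}$ by $W_1, t_1, \ldots, t_{f_1}, W_2, s_1, \ldots, s_{f_2}$, with $W_1, W_2$ wide and $f_1, f_2 \geq 1$ the fan sizes.

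Using the orientation of $\partial\overline{M}$, one checks that the CCW direction around $v$ enters each wide triangle through its left edge and exits through its right edge, whereas it enters each thin triangle (with $v$ at bottom-right) through its right edge and exits through its bottom edge. Combined with the coloring rule, this determines the shared edge and its far endpoint at each consecutive transition.

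At a transition from a wide $W_k$ to a thin triangle, the wide's blue right edge (reaching its red bottom-right corner) is identified with the thin's blue right edge (reaching its top vertex), so the common far endpoint is the red bottom-right of $W_k$. At a transition from a thin triangle to a wide $W_k$, the thin's bottom edge (now forced red by color matching) is identified with the wide's red left edge (reaching its blue bottom-left corner), so the common far endpoint is blue. At every interior thin-to-thin transition, the exiting bottom edge of one thin meets the entering right edge of the next, with common far endpoint a blue bottom-left of a thin triangle.

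Reading off the resulting cyclic list of neighbors of $v$: one red vertex (the bottom-right of $W_1$, call it $P$), then $f_1$ blue vertices, then one red vertex (the bottom-right of $W_2$, call it $P'$), then $f_2$ blue vertices. Thus $v$ has exactly two red neighbors $P, P'$, located in the two wide triangles as stated; and since each fan has $f_i \geq 1$ triangles, each arc between $P$ and $P'$ in the cyclic order contains at least one blue neighbor, so $P$ and $P'$ are not consecutive. The only subtle step is the orientation check fixing the CCW entry/exit edges; once that is done, the rest is routine color-matching bookkeeping.
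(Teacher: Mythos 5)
Your proof is correct and takes essentially the same route as the paper, which obtains this observation by applying the red/blue rule of Lemma \ref{ozzie-veering} (``$\pi$-angle on top: right vertex and left edge red, left vertex and right edge blue'') to the link of $v$ as drawn in Figure \ref{fig:veeringvertices}; your cyclic walk through $W_1,t_1,\dots,t_{f_1},W_2,s_1,\dots,s_{f_2}$ is just a written-out version of that figure inspection, and it correctly yields exactly one same-colored neighbor per wide triangle, separated by the $f_i\geq 1$ opposite-colored fan neighbors. The one step you assert rather than verify (which edge the counterclockwise walk enters/exits in each triangle type) is harmless: the mirror-image convention produces the same count of two same-colored, non-consecutive neighbors, so the conclusion does not depend on resolving that chirality.
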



\begin{observation}
As a consequence, if we draw all edges in $\partial \overline{M}$ that connect two vertices of the same color (incidentally, such an edge is always of the other color: check Figure \ref{fig:onetetrahedron}, right), then this defines a system of disjoint curves $\gamma_1, \gamma_2, \dots$ on $\partial \overline{M}$, passing through all vertices. We claim that no curve $\gamma_i$ can bound a disk in $\partial \overline{M}$. For, suppose without loss of generality that $\gamma_1$ has red vertices and bounds an \emph{innermost} disk. By observation \ref{obs:two-neighbors}, any vertex $v$ on this curve must have blue neighbors on both sides of $\gamma_1$. Hence there is a blue vertex inside $\gamma_1$ that belongs to some $\gamma_i$, contradicting the assumption that $\gamma_1$ was innermost.


Therefore the complement of the union of the curves $\gamma_i$ is a union of annuli $\lad_i$, because each component of $\partial \overline{M}$ has Euler characteristic $0$. The number of parallel curves $\gamma_i$ inside each torus component $T$ of $\partial \overline{M}$ is even, because their colors alternate. Note that the \emph{slope} of the $\gamma_i$ in $\mathbb{P}H_1(T, \mathbb{Z})\simeq \mathbb{Q} \cup \{ \infty \} $ is an invariant of the veering structure. See Observation \ref{obs:slope-meaning} for the meaning of this slope in the context of layered triangulations of mapping tori.
\end{observation}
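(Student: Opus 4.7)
The plan is to derive each claim of the observation from Observation~\ref{obs:two-neighbors} together with the global topology of the torus. Since every vertex of $\partial\overline{M}$ has exactly two same-colored neighbors (namely the opposite vertices $P,P'$ of the two wide triangles at $v$), the edges of $\partial\overline{M}$ whose endpoints have the same color form a $2$-regular graph on the vertex set. Such a graph is automatically a disjoint union of simple closed curves $\gamma_1,\gamma_2,\dots$ covering every vertex, and by the rule from Figure~\ref{fig:onetetrahedron} each $\gamma_i$ is monochromatic on vertices while using edges of the opposite color.

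To show that no $\gamma_i$ bounds a disk, I would argue by contradiction and pick an innermost disk $D \subset \partial\overline{M}$ bounded by some $\gamma_1$, whose vertices are (say) red. By Observation~\ref{obs:two-neighbors} every vertex on $\gamma_1$ has blue neighbors in both fans, so at least one blue neighbor $w$ lies in the interior of $D$. Now $w$ belongs to some curve $\gamma_j$, and since the $\gamma_i$ are pairwise disjoint, $\gamma_j$ is entirely contained in $D$. But any simple closed curve in a disk bounds a strictly smaller sub-disk, contradicting the choice of $\gamma_1$ as innermost.

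On each torus component $T \subset \partial \overline{M}$, the curves $\gamma_i \cap T$ are pairwise disjoint simple essential loops, and on a torus such a family is automatically mutually parallel; the complement is then a disjoint union of open annuli $\lad_i$, as desired. For the parity statement, suppose two adjacent parallel curves $\gamma, \gamma'$ on $T$ had the same color, and let $A$ be the annular strip they cobound (containing no other $\gamma_k$). A vertex $v \in \gamma$ would have a neighbor of the opposite color lying on the $A$-side, but that neighbor must lie on some $\gamma_k$, forcing $\gamma_k \subset A$, a contradiction. Hence colors alternate around $T$ and the number of curves on $T$ is even. Finally, the common isotopy class of the $\gamma_i$ on $T$ is a well-defined element of $\mathbb{P}H_1(T,\mathbb{Z}) \simeq \mathbb{Q}\cup\{\infty\}$; it depends only on the coloring induced by the veering structure, and so is a veering invariant.

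The main obstacle is the first step: one has to verify from Figure~\ref{fig:onetetrahedron} and the coloring rules of Section~\ref{sec:observations} that the two same-colored neighbors supplied by Observation~\ref{obs:two-neighbors} are the \emph{only} same-color endpoints at $v$, so that the resulting graph is genuinely $2$-regular rather than of higher valence. Once this local combinatorial check is done, the remainder is routine innermost-disk reasoning and elementary torus topology.
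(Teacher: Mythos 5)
Your proposal is correct and follows essentially the same route as the paper: 2-regularity of the same-color edge graph via Observation \ref{obs:two-neighbors}, the identical innermost-disk contradiction, and standard torus topology for the annuli, alternation, and slope invariance (the paper gets the annuli from $\chi(\partial\overline{M})=0$ rather than parallelism, and merely asserts the color alternation you spell out, but these are cosmetic differences). The ``main obstacle'' you flag is already settled by the wording of Observation \ref{obs:two-neighbors}, which says $v$ is connected to \emph{precisely} two same-colored vertices, so no further local check is needed.
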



\begin{observation}
Inside each annulus $\lad_i$, every edge connects one boundary component of $\lad_i$ to the other (indeed the edge has ends of distinct colors because it does not belong to the $\gamma_i$). So $\lad_i$ has the structure of a \emph{ladder}, with two \emph{ladderpoles} $\gamma_i, \gamma_{i+1}$ (red and blue) connected by many \emph{rungs}. Two consecutive rungs always have a common endpoint, and each triangle of $\lad_i$ is bounded by two rungs and one ladderpole segment (see Figure \ref{fig:cuspview} in anticipation).

The combinatorics of the rungs inside each annulus $\lad_i$ could be expressed as a cyclic sequence of ``rights'' and ``lefts,'' echoing the situation with punctured torus bundles \cite{gf:punctured-torus}. However, in our more general setting the sequences for distinct annuli are generally unrelated.
\end{observation}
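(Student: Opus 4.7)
The observation bundles four assertions: (a) every edge of the cusp triangulation lying in $\lad_i$ connects $\gamma_i$ to $\gamma_{i+1}$; (b) $\lad_i$ has a ladder structure with ladderpoles $\gamma_i, \gamma_{i+1}$; (c) consecutive rungs share an endpoint; (d) each triangle of $\lad_i$ is bounded by two rungs and one ladderpole segment. I would first dispose of (a) by appealing to the construction of the $\gamma_j$ in the previous observation: these curves consist of \emph{all} edges joining two vertices of the same color, so any edge not on some $\gamma_j$ has endpoints of distinct colors. Since $\gamma_i$ and $\gamma_{i+1}$ have opposite colors, an edge in the interior of $\lad_i$ must run from one boundary component of $\lad_i$ to the other.

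Next, I would verify (d) by a purely local analysis of a single triangle $T$ of the cusp triangulation lying in $\lad_i$, using the color conventions recorded right after Lemma \ref{ozzie-veering}. Drawing $T$ with its $\pi$-angle on top, the bottom-left vertex is forced blue and the bottom-right vertex is forced red, while the top vertex has some color $c$ determined by the neighboring tetrahedron. The bottom edge always joins a blue and a red vertex, hence is a rung. If $c$ is red, the right edge (red--red) is a ladderpole and the left edge (red--blue) is a rung; if $c$ is blue, the left edge is the ladderpole and the right edge is the rung. In either case $T$ has exactly one ladderpole edge and two rung edges, and the two rungs meet at the vertex opposite the ladderpole.

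Claims (b) and (c) then follow by assembling this local picture globally. By (a) every rung has one endpoint on $\gamma_i$ and one on $\gamma_{i+1}$, so the rungs admit a cyclic order around the annulus. Consider two cyclically consecutive rungs $r, r'$ bounding a region $R$ of $\lad_i$. Since all ladderpole edges lie on $\gamma_i \cup \gamma_{i+1}$, two triangles of $\lad_i$ that share an edge in the interior of the annulus must share a rung; so if $R$ contained more than one cusp triangle, it would contain an interior rung, contradicting the consecutiveness of $r$ and $r'$. Hence $R$ is a single triangle, its two rungs are $r$ and $r'$, and by (d) they share the vertex opposite the ladderpole. Assembling these triangles cyclically around the annulus yields the ladder structure of (b). The one point that needs care is this last step---ruling out interior subdivisions of $R$---but it falls out immediately from (a) together with the fact that ladderpoles are confined to $\gamma_i \cup \gamma_{i+1}$.
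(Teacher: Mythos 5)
Your proposal is correct and follows essentially the same route as the paper: the paper's justification is precisely the parenthetical color argument for (a) (interior edges have bichromatic endpoints, hence join the two monochromatic ladderpoles), with the ladder structure, shared endpoints of consecutive rungs, and the two-rungs-one-ladderpole description of each triangle read off from the coloring convention established after Lemma \ref{ozzie-veering} and Figure \ref{fig:cuspview}. You simply make explicit the local triangle analysis and the global assembly that the paper leaves to inspection of the figure.
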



\begin{figure}[h!]
\begin{center}
\psfrag{p}{$w$}
\psfrag{e}{$e$}
\psfrag{t}{$T$}
\psfrag{tt}{$T'$}
\psfrag{o}{$0$}
\psfrag{g}{$\gamma$}
\psfrag{L}{Ladder $\mathcal{L}$}
\psfrag{LL}{Ladder $\mathcal{L}'$}
\includegraphics[width=12.5cm]{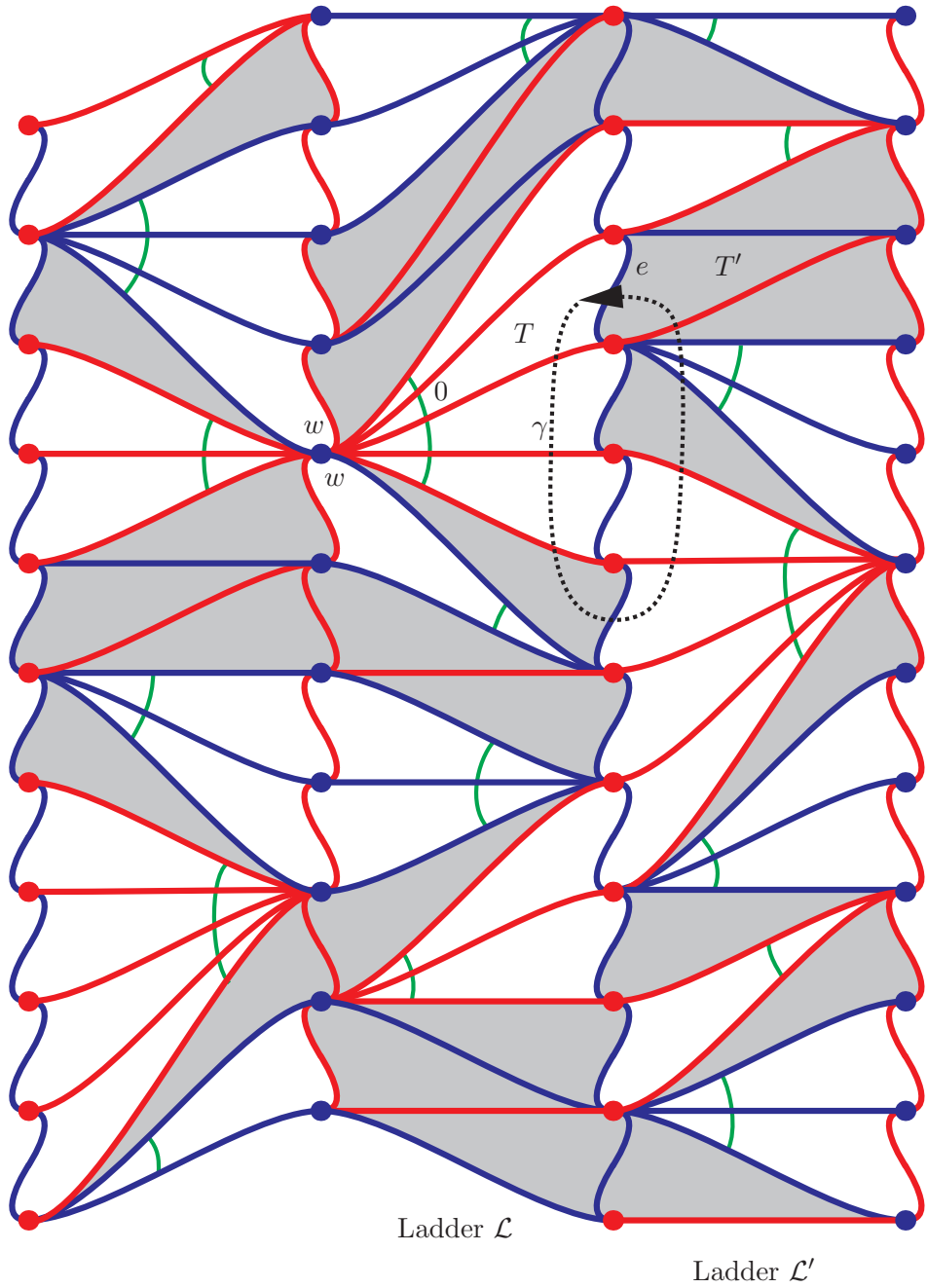}
\caption{View of $\partial \overline{M}$ with all colors. Ladderpoles are vertical, hinge triangles are shaded, and wide vs. thin angles are distinguished by a smoothing convention at each vertex. Ladderpoles may have different lengths, as the cutouts at the top and bottom suggest. Flat angles (after Lemma \ref{lemma:hinge-rescue} of the ``rescuing algorithm'') are marked in green. In Lemma \ref{lemma:nonhinge-rescue}, the curve $\gamma$ rescues the flat angle marked $0$, via the deformation $D^\gamma$.}
\label{fig:cuspview}
\end{center}
\end{figure}

\begin{observation}
For $v$ a red vertex, the two $\pi$-angles at $v$ are \emph{clockwise} just after the ladderpole through $v$, for the cyclic order on the link of $v$. For $v$ a blue vertex, the $\pi$-angles are \emph{counterclockwise} just after the ladderpole. The rest of the space on each side of the ladderpole is occupied by a full fan of $v$. This is visible in Figure \ref{fig:veeringvertices}.
\end{observation}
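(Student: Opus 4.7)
The plan is to read both cases of the observation off the local coloring rule for boundary triangles stated just before Observation 2.2: with the $\pi$-angle drawn on top, the left edge and right vertex are red, the right edge and left vertex are blue, and the top vertex and bottom edge are free in color. Swapping the two colors interchanges "left" and "right" in that rule and therefore reverses the rotational sense around a vertex, so it suffices to verify the observation at a red vertex $v$; the blue case then follows automatically.

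First I would identify the two ladderpole edges at $v$. By Observation 2.2, $v$ has exactly two same-colored (red) neighbors $P, P'$, and by Observation 2.3 the edges $vP, vP'$ lie on the red ladderpole through $v$ and are themselves blue. The coloring rule now leaves only one role they can play: a blue edge joining two red vertices must appear as the \emph{right edge} of a boundary triangle whose apex ($\pi$-vertex) and bottom-right vertex are both red. Hence each of $vP$ and $vP'$ is contained in a unique triangle in which $v$ itself sits at the apex, and these two triangles are precisely the two wide triangles at $v$.

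It remains to locate the $\pi$-sectors in the link of $v$ and identify what fills the rest. In each of the wide triangles just found, the $\pi$-sector at $v$ is the sector between the right edge (blue, ladderpole) and the left edge (red). With the cusp orientation used in Definition 1.3 and Figure 1 (the same one that underlies "counterclockwise" in the veering condition), a direct inspection of the local picture shows that the right edge is reached from the left edge by rotating clockwise around $v$ inside the triangle, so each $\pi$-sector is immediately clockwise of a ladderpole edge at $v$. On the opposite side of each ladderpole edge, every triangle incident to $v$ has a thin ($0$) angle at $v$, and the coloring rule forces $v$ to be its bottom-right vertex, which is exactly the role $v$ plays in a fan (Definition 1.3); so each of the two regions bounded by a ladderpole edge and a $\pi$-sector is filled by a full fan of $v$. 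The only delicate step is the orientation check in the previous sentence: once the conventions of Figure 1 and Definition 1.3 are fixed, it is a single local picture, and with red and blue swapped it yields the counterclockwise statement at a blue vertex.
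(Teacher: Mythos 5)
Your overall route---deriving the statement from the triangle coloring rule together with Observation \ref{obs:two-neighbors}---is essentially the paper's own (the paper simply reads the statement off Figure \ref{fig:veeringvertices} once the colors are in place), and your identification of the two ladderpole edges at a red vertex $v$ as the right edges of the two wide triangles at $v$ is correct. (Strictly, the uniqueness of the triangle with $v$ at the apex along each ladderpole edge also uses that fans are nonempty, so the two wide triangles at $v$ are not adjacent across a ladderpole edge; this is minor.) The real problem is the step you yourself flag as the delicate one: the orientation check is stated backwards, and as written your conclusion does not follow from it. If it were true that ``the right edge is reached from the left edge by rotating clockwise around $v$ inside the triangle,'' then rotating clockwise from the ladderpole (right) edge you would \emph{leave} the wide triangle, so the $\pi$-sector would sit immediately \emph{counterclockwise} of the ladderpole---the opposite of what you assert in the same sentence and of what the observation claims at a red vertex.

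In fact the rotation goes the other way. Under the orientation convention that makes the coloring rule (``$\pi$-angle on top: right vertex red, left vertex blue'') consistent with Definition \ref{def:veering}, the counterclockwise order of the corners of an apex-up triangle is apex, bottom-left, bottom-right. You can verify this convention on a \emph{fan} triangle at a red (right-veering) vertex $v$: the veering condition places that triangle's $\pi$-corner at the far end of the edge bounding its sector at $v$ on the clockwise side, and drawing it apex-up then puts $v$ at the bottom-right (red) corner, as the rule requires. Applying the same corner order to a \emph{wide} triangle at $v$ (apex $=v$) shows that the right (ladderpole) edge is the counterclockwise boundary of the $\pi$-sector: rotating clockwise from the ladderpole edge sweeps the $\pi$-sector and ends at the left edge. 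That is the correct justification of ``clockwise just after the ladderpole'' at a red vertex, and swapping colors (a mirror image) gives the counterclockwise statement at a blue vertex. With this step repaired, the rest of your argument goes through: since the link of $v$ carries exactly two wide angles, everything outside the two $\pi$-sectors and the two ladderpole edges is forced to be the two fans.
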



\begin{observation}
We can examine Figure \ref{fig:veeringvertices} to determine which triangles of the fans of $v$ are hinges. (Ignore the two triangles outside the fans of $v$, \emph{i.e.}\ the ones having $\pi$-angles at $v$: they belong to other fans. For example the triangle $T'$ belongs to a fan of $P'$ and a fan of $Q$.) A triangle is hinge if and only if its vertex at the wide angle has a color different from the opposite edge, so examination yields the following:
\begin{itemize}
\item In a long fan (of 2 or more triangles), only the first and last triangle are hinges.
\item In a short fan, the (single) triangle is not a hinge.
\end{itemize}
Hinge triangles are shaded in Figure \ref{fig:veeringvertices}.
\end{observation}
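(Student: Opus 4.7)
The plan is to verify the observation by direct inspection at a single vertex $v$, say $v$ red (right-veering); the blue case is symmetric. The two ingredients are the coloring rule (in a cusp triangle drawn with its $\pi$-angle on top, the left edge is red, the right edge blue, the left vertex blue, the right vertex red) and the hinge criterion ($T$ is a hinge iff its top vertex and its base disagree in color).

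Since $v$ is red, $v$ must be the right vertex of every fan triangle $T_i$ (where it carries a thin angle) and the top vertex of the two wide triangles $W_P, W_{P'}$ bordering the fan. Fixing a counterclockwise orientation on $\partial\overline{M}$ and translating the right-veering condition into the cyclic link at $v$, one checks that within every $T_i$ the blue right edge $v\text{-}\mathrm{top}_i$ is encountered in the link of $v$ immediately before $T_i$, while the base $v\text{-}\mathrm{left}_i$ is encountered immediately after. Numbering the fan $T_1, \ldots, T_d$ and the bounding $v$-edges $e_0, \ldots, e_d$ in counterclockwise order then yields the identifications $e_{i-1} = v\text{-}\mathrm{top}_i$ and $e_i = v\text{-}\mathrm{left}_i$.

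Next I would propagate colors along the fan. For $1 \le i \le d-1$, the edge $e_i$ is simultaneously the base of $T_i$ and the right edge of $T_{i+1}$, so it is blue; moreover $\mathrm{top}_{i+1} = \mathrm{left}_i$ is a blue (left-vertex) vertex of $T_i$. Repeating the same ccw analysis on $W_P, W_{P'}$ shows that $e_0$ is the blue right edge $v\text{-}P$ of $W_P$ (so $\mathrm{top}_1 = P$, which is red as a point of the red ladderpole through $v$), while $e_d$ is the red left edge $v\text{-}u'$ of $W_{P'}$ (so the base of $T_d$ is red).

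Combining these ingredients gives exactly the claimed dichotomy. For $d \ge 2$, both $T_1$ (top red, base blue) and $T_d$ (top blue, base red) are hinges, while for $2 \le i \le d-1$ the triangle $T_i$ has top and base both blue and is a non-hinge. For $d = 1$ we read off $\mathrm{top}_1 = P$ red and base $e_1 = v\text{-}u'$ red, so the single triangle is a non-hinge. The only real difficulty is the orientation bookkeeping in the second step: extracting consistently from the definitions of ``right-veering'' and of the left/right edge convention which of each triangle's two $v$-edges is the ccw-entering edge in the link of $v$. Once this is pinned down, the color propagation is mechanical.
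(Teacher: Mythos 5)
Your proposal is correct and follows essentially the same route as the paper: the paper also deduces the dichotomy by coloring the link of $v$ with the rule from Lemma \ref{ozzie-veering} (left edge/right vertex red, right edge/left vertex blue, with the $\pi$-angle on top) and applying the hinge criterion ``top vertex vs.\ base edge disagree,'' only it performs the examination pictorially in Figure \ref{fig:veeringvertices} rather than symbolically. Your explicit bookkeeping ($e_{i-1}=v$--$\mathrm{top}_i$, $e_i=v$--$\mathrm{left}_i$, with the end edges coming from the two wide triangles) is just a written-out version of that same inspection, and the color propagation you give checks out.
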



\begin{observation}
A triangle of a \emph{short} fan of a vertex $v$ (such as $vPQ$ in Figure \ref{fig:veeringvertices}) belongs to a \emph{long} fan of the vertex at its other thin corner (here $Q$), since the angles $\widehat{vQP}$ and $\widehat{vQP'}$ are both thin. (In fact, the fan at $Q$ is not just long but has length $\geq 3$, since the triangle $vPQ$, being non-hinge, cannot be the first nor the last triangle of its fan at $Q$.)

This implies in particular that \emph{there are} hinge triangles, and in fact that there are some inside every annulus $\lad_i$ of every boundary torus of $\overline{M}$.

Therefore, every hinge triangle belongs to precisely two long fans (while every non-hinge triangle belongs to precisely one long fan). Since every long fan also contains precisely two hinge triangles, we can jump from one long fan to the next according to the following scheme:
$$\begin{array}{ccccccccc}
\text{long fan}&&&&\text{long fan}&&&&\text{long fan}   \\
& \nwarrow && \nearrow && \nwarrow && \nearrow &\\
&&\text{hinge triangle}&&&&\text{hinge triangle}&&
\end{array}
$$
where arrows denote inclusion. By construction, this sequence of fans follows one of the ladders $\lad_i$.
\end{observation}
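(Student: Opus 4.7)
The approach is to extract each assertion in the observation from the color/type dictionary of the preceding observations (in particular Lemma \ref{ozzie-veering} and the coloring rules of Figure \ref{fig:onetetrahedron}), combined with a short double-counting argument for the membership statements.

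I would start with the short-fan statement. Let $T = vPQ$ be the unique triangle of a short fan at $v$, with $\pi$-angle at $P$; then $T$ is thin at both $v$ and $Q$. The edge $vQ$ separates $T$ from one of the two wide triangles at $v$, say $T' = vQP'$, which has its $\pi$-angle at $v$ and so is thin at $Q$. Thus $T$ and $T'$ both belong to the fan at $Q$, which has length $\geq 2$. Using the previously noted facts that a short-fan triangle is non-hinge and that only the first and last triangles of a long fan are hinges, length exactly $2$ is ruled out (both triangles would be hinges, contradicting $T$ being non-hinge); so the fan at $Q$ has length $\geq 3$. A hinge in each annulus $\lad_i$ now comes for free: pick any vertex $v$ on a ladderpole bounding $\lad_i$ and consider its fan inside $\lad_i$; if long, its endpoints are hinges, and if short, the argument above promotes it to a long fan of length $\geq 3$ (automatically inside $\lad_i$, since the opposite thin vertex $Q$ is a vertex of a triangle in $\lad_i$) with hinge endpoints.

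For the membership counts I would combine a local impossibility with global bookkeeping. Every triangle has exactly two thin corners and lies in exactly two fans. A hinge cannot sit in a short fan (whose single triangle is non-hinge), so each hinge is in two long fans. A non-hinge $T$ cannot sit in two short fans either: the triangle $T_{vw}$ adjacent to $T$ across the edge joining its two thin corners $v, w$ would have to carry the $\pi$-angle at both $v$ and $w$, which is impossible. To rule out ``two long fans,'' I count on each torus of $\partial\overline M$: the Euler characteristic relation gives $F = 2V$, and each vertex hosts two fans, so the total number of fans $s+\ell$ equals $2V = F = h+n$. Counting (hinge, long-fan-endpoint) pairs two ways yields $2h = 2\ell$, so $\ell = h$ and $s = n$. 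The identity $2(h+n) = s + 2\ell + M$, where $M$ denotes the total number of non-hinge-in-long-fan incidences, then forces $M = n$; combined with the local impossibility above, this implies each non-hinge lies in exactly one long fan and one short fan.

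The zigzag scheme is now immediate: each long fan has two hinge endpoints, and each hinge is shared by exactly two long fans, producing the advertised alternating cyclic sequence. Since every individual fan at a vertex $v$ lies on one side of $v$'s ladderpole and hence entirely inside one of the two adjacent ladders $\lad_i$, the whole sequence stays within a single ladder. The main subtle step is the non-hinge side of the membership claim, which requires both the local ``$T_{vw}$ cannot be wide at both $v$ and $w$'' observation and the Euler-characteristic bookkeeping; everything else flows from the short-fan-to-long-fan-of-length-$\geq 3$ promotion of the first step.
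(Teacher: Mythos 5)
Your proof is correct, and for most of the statement it follows the paper's own route: the short-fan triangle $T=vPQ$ is promoted to a long fan at $Q$ because the wide-at-$v$ triangle across $vQ$ is also thin at $Q$; length $2$ is excluded because $T$ is non-hinge; hinge triangles in every annulus $\lad_i$ then come from the endpoints of that long fan (your parenthetical reason that this fan lies in $\lad_i$ is too terse as stated --- the correct justification is the standing fact, which you do invoke later, that each fan of a vertex lies on one side of its ladderpole and hence inside a single ladder, and this fan contains $T\subset\lad_i$); and the zigzag scheme along a single ladder is obtained exactly as in the paper. Where you genuinely diverge is the parenthetical claim that every non-hinge triangle lies in precisely one long fan. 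The paper leaves this to inspection of the cusp pictures; a direct local argument is that every triangle is an \emph{end} triangle of the fan at the thin vertex of its ladderpole edge, so a non-hinge triangle forces that fan to be short, and the other fan is long by your first step. You instead prove it by a local impossibility (no triangle lies in two short fans, since the neighbor across the thin--thin edge would need two $\pi$-angles) combined with Euler-characteristic double counting on each boundary torus ($F=2V$, hence $s+\ell=h+n$, then $\ell=h$, $s=n$, and the incidence count $M=n$). That bookkeeping is valid --- and it has the pleasant byproduct of identifying the exact numbers of short and long fans --- but it is heavier than needed; the one-line local observation gives ``at most one long fan'' directly and is closer in spirit to the paper's figure-based reasoning.
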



All the above observations are summarized in Figure \ref{fig:cuspview}, a view of the triangulation of $\partial \overline{M}$, also called the ``cusp view''. Ladderpoles are vertical, hinge triangles are shaded, and we use a train track-like smoothing convention to emphasize which angles near any given vertex are the wide angles (look \emph{e.g.}\ for the two $w$-labels for ``wide'').

We close this section with two observations that hold true under additional hypotheses.

\begin{observation}\label{obs:transverse}
Suppose that the veering triangulation $\tau$ is transverse--taut. Recall this means there is a transverse orientation on all the faces (\emph{e.g.}\ towards the reader in the first panel of Figure \ref{fig:onetetrahedron}) that is consistent over all tetrahedra. This orientation induces an ``upward'' orientation along the ladders in each cusp triangulation. Inside each cusp, ladders of two types alternate: in one type, the wide angle of each triangle is always \emph{above} the base (as in ladder $\mathcal{L}$ in Figure \ref{fig:cuspview}); in the other type, the wide angle is always \emph{below} the base (as in ladder  $\mathcal{L}'$). We call the first type of ladder \emph{ascending}, and the second type \emph{descending}. 

If the transverse orientation of Figure \ref{fig:onetetrahedron} points toward the reader, the truncated vertices at $A$ and $C$ belong to ascending ladders, while the truncated vertices at $B$ and $D$ belong to descending ladders. In particular, each tetrahedron has two vertices in each type of ladder; this fact will be crucial in Sections \ref{sec:lowerbound} and \ref{sec:holonomies}.
\end{observation}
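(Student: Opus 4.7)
The observation bundles several claims: the transverse orientation induces an ``upward'' direction along each ladder; cusp triangles split into two types (wide angle above or below the base) which are constant within each ladder and alternate between adjacent ladders; and each tetrahedron contributes exactly two vertices of each type. My plan is to work directly with Figure~\ref{fig:onetetrahedron}, oriented so that the transverse arrow on every face points toward the reader, and to track the induced arrows into the cusp triangulation.

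By the definition of transverse--taut, the two faces containing the front $\pi$-diagonal $AC$ have transverse orientation pointing \emph{out} of the tetrahedron, while the two faces containing the back diagonal $BD$ point \emph{in}; call $AC$ the outflow $\pi$-edge and $BD$ the inflow $\pi$-edge. For each of the four cusp triangles I would then read off the direction of the transverse arrow across its three edges. At vertex $A$ the wide corner corresponds to $AC$ and the base of the cusp triangle comes from the face opposite $AC$, namely the inflow face $ABD$: the base arrow therefore points \emph{into} the cusp triangle, while the two side arrows (from the outflow faces $ABC$ and $ACD$) point \emph{out}. The cusp at $C$ is identical, and the cusps at $B, D$ are dual: base (outflow) arrow out, side (inflow) arrows in. This already gives the $2+2$ split of vertices, pairing them by which of the two $\pi$-edges they sit on.

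Next I would define the upward direction of a ladder as the common ladder-parallel sense of the rung arrows. The key point is that the two rungs of any single cusp triangle---its base and its non-ladderpole side---project onto the ladderpole direction the same way: for an $A/C$-type triangle both arrows project from the base toward the wide vertex, and for a $B/D$-type triangle both project in the opposite sense. Since two cusp triangles sharing a rung inherit the same arrow across it, this common sense propagates unchanged along the ladder, so the upward direction is globally well defined and each ladder is entirely ascending or entirely descending.

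For alternation, consider two adjacent ladders meeting at a ladderpole $\gamma$ and any vertex $v\in\gamma$. By Observation~\ref{obs:two-neighbors}, $v$ has exactly two same-color neighbors on $\gamma$, one above and one below; each of the two wide triangles at $v$ lies in one of the two adjacent ladders and uses one of these ladderpole edges as a side. In the ascending ladder the wide vertex $v$ lies above its triangle's base, forcing the ladderpole edge of that wide triangle to descend from $v$; in the descending ladder it must ascend. Hence the two adjacent ladders must have opposite types. The subtlest point is verifying that the two wide triangles at $v$ really use opposite ladderpole edges, rather than coincidentally sharing one: this follows by applying the first-step analysis to the two tetrahedra that contain these wide triangles, which meet along the edge of $\tau$ corresponding to $v$ and which, by transverse--tautness, sit on opposite sides of it (with this edge as outflow $\pi$-edge of one and inflow $\pi$-edge of the other), pinning down the two wide triangles in opposite orientations along $\gamma$.
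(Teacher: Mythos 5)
Your analysis of the model tetrahedron is correct (with $\sigma$ toward the reader, the base of the cusp triangle at $A$ or $C$ is co-oriented \emph{into} the triangle and the two wide-adjacent edges \emph{out}, dually at $B$ and $D$), and so is the propagation step: the two rungs of any cusp triangle are co-oriented the same way along the ladder, so the rung arrows are coherent and ``up'' is well defined on each ladder. The gap is the very next inference, ``so each ladder is entirely ascending or entirely descending.'' Arrow coherence alone does not give this: a ladder in which an $A/C$-type triangle is followed, across its wide-adjacent rung, by a $B/D$-type triangle whose wide vertex is the \emph{other} endpoint of that rung has perfectly coherent rung arrows, yet it would contain one ascending and one descending triangle. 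What excludes such a mix is not the transverse--taut bookkeeping but the veering structure of the cusp recorded in Section \ref{sec:observations}: at each vertex the two $\pi$-angles sit immediately adjacent to the ladderpole, on the side (clockwise or counterclockwise) dictated by the color of the vertex, and since the two poles of a ladder have opposite colors this forces all triangles of one ladder to have their wide corner attached to the pole in the same sense --- the picture of Figure \ref{fig:cuspview}. You never invoke this, so the central constancy claim is unproved in your write-up.

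The same omission undermines your resolution of your own ``subtlest point.'' It is true that the edge of $\tau$ at $v$ is the outflow $\pi$-edge of one of the two wide tetrahedra and the inflow $\pi$-edge of the other (both $\pi$-sectors reverse the co-orientation around that edge, so one is a source and one a sink), but this does \emph{not} force the two wide cusp triangles at $v$ onto opposite ladderpole edges: the configuration where they share the same ladderpole segment --- the two wide tetrahedra glued along a common face containing that edge, i.e.\ an empty fan at $v$ --- is entirely compatible with the source/sink arrow pattern. It is ruled out only by the veering fact, noted right after Definition \ref{def:veering}, that every fan is nonempty (equivalently, by the chirality observation above). Moreover, your step ``opposite pole edges at $v$ $\Rightarrow$ opposite types'' tacitly assumes the up-directions of the two adjacent ladders agree along the shared pole; either justify that, or argue intrinsically: every ladderpole segment is the wide-adjacent edge of \emph{both} triangles meeting along it, so its arrow points out of an $A/C$-type triangle and into a $B/D$-type one, whence (given constancy within each ladder) adjacent ladders have opposite types. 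For comparison, the paper gives no written proof --- the observation is read off from Figures \ref{fig:onetetrahedron} and \ref{fig:cuspview} --- and your tetrahedron-by-tetrahedron arrow analysis is the right way to make that precise; it just has to be combined with the veering facts (nonempty fans, vertex chirality) already established in Section \ref{sec:observations}, which carry the real weight in the constancy and alternation claims.
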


\begin{observation}\label{obs:slope-meaning}
Suppose that the veering triangulation $\tau$ comes from Agol's construction of  a layered triangulation of a mapping torus with monodromy $\phi$, with punctures at the singularities of the $\phi$--invariant foliations $\mathcal{F}, \mathcal{F}'$. Then 
%
the germs of the singular leaves of $\mathcal{F}, \mathcal{F}'$ incident to a puncture of the fiber
define a slope in each cusp of $M$. One can prove that these slopes are the same as the slopes of the ladderpoles $\gamma_i$. 
At each cusp of $M$, the number of prongs of $\mathcal{F}$ (or $\mathcal{F}'$) is equal to the number of pairs of ladders, multiplied by the intersection number of the
ladder slope with the fiber slope.
\end{observation}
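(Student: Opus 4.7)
The plan is to use Agol's construction explicitly. Start with a maximal train track $\tau_0$ on the fiber $S$ carrying the unstable foliation $\mathcal{F}$. Agol's recipe produces $\tau$ as a layered triangulation associated to a $\phi$-periodic sequence of maximal splittings of $\tau_0$, each splitting contributing one tetrahedron of $\tau$. Near a puncture $p$ of $S$ that is a $k$-prong singularity of $\mathcal{F}$ (equivalently of $\mathcal{F}'$, since the two foliations share the same singular set and prong count), the complementary region of $\tau_0$ at $p$ is a punctured $k$-gon whose $k$ boundary cusps are in bijection with the prongs of $\mathcal{F}$ at $p$; dually, these correspond to $k$ edges of the initial ideal triangulation incident to $p$.

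My first step would be to verify the compatibility between the red/blue dichotomy of Lemma \ref{ozzie-veering} and the two foliations. The color of each edge of $\tau$ is determined by which pair of thin edges of its tetrahedra it belongs to; propagating through the splits, one checks that red edges are dual to branches carrying $\mathcal{F}$ while blue edges are dual to branches carrying $\mathcal{F}'$. Under this identification, each prong of $\mathcal{F}$ emerging from $p$ corresponds to a unique red vertex in the cusp triangulation at $p$ lying in a fixed fiber cross-section, and similarly prongs of $\mathcal{F}'$ correspond to blue vertices. In particular this already pins down the slope on the cusp torus determined by the germ of any singular leaf at $p$.

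Next I would trace a single prong $\pi$ of $\mathcal{F}$ at $p$ along the splitting sequence. Between splits the prong stays anchored at one cusp of the complementary $\mathcal{F}$-region; each split local to $\pi$ slides the anchor by one blue edge to an adjacent same-colored vertex of the cusp triangulation. After traversing one period the monodromy $\phi$ sends $\pi$ to another prong at $p$, and after $k/s$ periods (where $s$ denotes the number of $\phi$-orbits among the $k$ prongs) the prong returns to itself. The cyclic trace of $\pi$ in the cusp torus is thus a monochromatic closed curve whose vertices are the red anchors encountered in succession, joined pairwise by blue edges; this is exactly a red ladderpole in the sense of Observation 2.3. The symmetric argument applied to $\mathcal{F}'$ identifies its prongs with the blue ladderpoles, and since the colors of ladderpoles alternate around the cusp torus, the number of $\phi$-orbits on the $\mathcal{F}$-prongs equals that on the $\mathcal{F}'$-prongs.

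Finally I would read off the numerical identity. The $s$ orbits produce $s$ red ladderpoles and, symmetrically, $s$ blue ones, so there are exactly $s$ pairs of ladders. Each such ladderpole winds $k/s$ times longitudinally (once per period) before closing up; since the fiber slope on the cusp torus is the meridian, the intersection number of the ladderpole slope with the fiber slope equals $k/s$, whence
\[
k \;=\; s \cdot (k/s) \;=\; \#(\text{pairs of ladders}) \cdot \iota(\text{ladder slope},\,\text{fiber slope}),
\]
and the slope of each singular-leaf germ at $p$ coincides with the common slope of the ladderpoles. The main obstacle is the middle step: verifying splitting-by-splitting that a prong's trace stays along a single monochromatic curve rather than jumping across a rung into a neighboring ladder. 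This reduces to a local analysis of each split, identifying which cusp of the complementary region is being resolved and matching it to the ladder picture of Figure \ref{fig:cuspview}, in conjunction with the color propagation rules established in the second paragraph.
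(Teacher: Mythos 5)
The paper itself offers no proof of this observation (it is stated with ``one can prove\dots''), so there is nothing internal to compare against; judged on its own, your strategy is the natural one: trace the germ of each prong through Agol's splitting sequence, identify the suspension of each prong orbit with a monochromatic ladderpole, and count. The final arithmetic is fine once that identification is granted: since $\phi$ (or the first-return map to the puncture orbit) is orientation-preserving it acts on the $k$ prongs of $\mathcal{F}$ by a cyclic rotation, so all orbits have the same length $k/s$, each suspended orbit meets the fiber boundary slope $k/s$ times, and $s\cdot(k/s)=k$ recovers the stated product formula.

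However, as written the proposal has genuine gaps. First, the crucial step---that under each split the prong's ``anchor'' slides along a single same-colored ladderpole rather than ever crossing a rung, i.e.\ that prong suspensions \emph{are} ladderpoles---is precisely the content of the observation, and you explicitly defer it to an unperformed ``local analysis of each split''; without that analysis nothing has been proved. Second, the color-compatibility claim underpinning the trace (``red edges are dual to branches carrying $\mathcal{F}$, blue edges to branches carrying $\mathcal{F}'$'') does not parse in Agol's setting: each train track in the splitting sequence carries only one of the two laminations, and every layer of the triangulation contains edges of both colors; the veering colors are instead governed by how an edge sits relative to \emph{both} foliations (e.g.\ the sign of the slope of the corresponding saddle connection in the flat structure, equivalently which pair of opposite sectors between prongs its germ at the puncture occupies, in the spirit of Lemma \ref{ozzie-veering}). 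This step must be reformulated before the tracing argument can even start. Third, to conclude that the number of pairs of ladders equals the number $s$ of prong orbits you need not only that distinct orbits give distinct ladderpoles but that \emph{every} ladderpole arises this way; injectivity alone yields an inequality, and the outline never addresses surjectivity (for instance by checking that the sectors between consecutive prongs of $\mathcal{F}\cup\mathcal{F}'$ account for all the ladders of Figure \ref{fig:cuspview}). Finally, minor points: when the puncture is not fixed by $\phi$ the argument should be run with the first-return map, and the assertion that all prong orbits have length $k/s$ needs the (easy) cyclic-rotation justification rather than being assumed.
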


\section{Leading--trailing deformations}\label{sec:lead-trail}

Let $GAS(\tau)$ be the set of generalized angle structures on a triangulation $\tau$, where the angles $\theta_j$ can take any value in $\RR$. 
In this section, we exhibit a spanning set for the tangent space $T_p GAS(\tau)$. All the facts in this section are proved in \cite[Section 4]{fg:angled-survey}, and we point to that paper for more detail.

\begin{define}\label{def:lead-trail}
A \emph{normal curve} on a component of $\partial \overline{M}$ is an embedded, oriented closed curve $\gamma$ transverse to the $1$--skeleton of the triangulation of $\partial \overline{M}$, such that $\gamma$ enters and exits each triangle through different edges. 

For each edge $e$ crossed by a normal curve $\gamma$, there is a real variable $\theta_{j_e}$ associated to the angle opposite $e$ in the triangle \emph{entered} by $\gamma$, and a real variable $\theta_{j'_e}$ associated to the angle opposite $e$ in the triangle \emph{left} by $\gamma$. Let $\eps_j$ be the $j$-th basis vector of $\mathbb{R}^{3n}$. Then the \emph{leading--trailing deformation} associated to $\gamma$ is the 
vector
$$D^\gamma := \sum_e (\eps_{j_e}-\eps_{j'_e})~.$$
Here each edge $e$ can appear many times in the sum (as often as it is crossed by $\gamma$) and the indices $j_e, j'_e$ may be swapped according to the direction in which $\gamma$ crosses $e$.
\end{define}

\begin{lemma}[Lemma 4.5 of \cite{fg:angled-survey}]
The vector $D^\gamma$ is tangent to $GAS(\tau)$. That is, if $(\theta)=(\theta_j)_{1\leq j \leq 3n}$ is a generalized angle structure, then $(\theta)+tD^\gamma$ is also one, for every real $t$.
\end{lemma}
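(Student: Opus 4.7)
The plan is to verify directly that $D^\gamma$ satisfies the homogeneous versions of the defining equations of $GAS(\tau)$. Since Definition \ref{def:anglestructure}(1)--(2) presents $GAS(\tau)$ as an affine subspace of $\RR^{3n}$ cut out by linear equations, the tangent space to $GAS(\tau)$ is the kernel of those same equations viewed as linear functionals. It therefore suffices to check two things: that for each tetrahedron $\Delta_i$ one has $(D^\gamma)_{3i-2} + (D^\gamma)_{3i-1} + (D^\gamma)_{3i} = 0$, and that for each interior edge $e$ of $M$, the sum of components of $D^\gamma$ over the angles around $e$ vanishes.

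For the tetrahedron sums, I would decompose the contributions to $D^\gamma$ according to how $\gamma$ passes through the four cusp triangles of $\Delta_i$. A single pass through a cusp triangle $T$ enters through some edge $e_1$, contributing $+\eps_\alpha$ with $\alpha$ the index of the angle of $\Delta_i$ opposite $e_1$ in $T$, and exits through a different edge $e_2$, contributing $-\eps_\beta$ with $\beta$ the index of the angle opposite $e_2$. Because distinct edges of a triangle have distinct opposite corners, we have $\alpha \neq \beta$, and both indices belong to $\{3i-2,3i-1,3i\}$; hence the pass contributes $+1-1=0$ to the tetrahedron sum. Summing over all passes gives the required identity.

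For the edge sums, I would pick an endpoint $v$ of $e$ on $\bdy\overline{M}$. The dihedral angles around $e$ are exactly the corners at $v$ in the star $\operatorname{Star}(v)$ in the cusp triangulation, so the edge equation is equivalent to the vanishing of the sum of $D^\gamma$-components indexed by these $v$-corners. The combinatorial point is that the angle opposite an edge $f$ in a cusp triangle $T$ sits at vertex $v$ exactly when $v \in T$ and $v \notin f$, that is, when $T \subset \operatorname{Star}(v)$ and $f \subset \bdy\operatorname{Star}(v)$. Crossings of $\gamma$ with edges $f$ containing $v$ therefore contribute nothing to the sum at $v$. On the other hand, for each crossing of $\gamma$ with an edge of $\bdy\operatorname{Star}(v)$, exactly one of the two adjacent triangles lies in $\operatorname{Star}(v)$, so the crossing contributes $+1$ if $\gamma$ is entering $\operatorname{Star}(v)$ and $-1$ if exiting. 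Since $\gamma$ is a closed normal curve (and hence avoids $v$ altogether), entries and exits match in number, and the total vanishes.

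The main step requiring care is the combinatorial analysis in the edge-sum verification: recognizing that the only crossings which contribute to the sum at $v$ are those with $\bdy\operatorname{Star}(v)$, and that they count the signed intersection number of $\gamma$ with $\bdy\operatorname{Star}(v)$. Once this is set up, the closedness of $\gamma$ finishes the argument at once.
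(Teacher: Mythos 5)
Your overall strategy is correct and is essentially the standard argument that the paper relies on (note that the paper does not prove this lemma itself; it defers to Lemma 4.5 of the cited survey): the tetrahedron equations are preserved because each pass of $\gamma$ through a cusp triangle adds $+1$ to one angle variable of that tetrahedron and $-1$ to a different one, and the edge equations are preserved because the contributions at a cusp vertex $v$ pair off according to entries and exits of $\gamma$ relative to the star of $v$. Your tetrahedron step is complete as written.

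The one place where your write-up has a gap is the edge-sum bookkeeping, which is phrased in set-theoretic terms ($v \in T$, $v \notin f$, ``exactly one of the two adjacent triangles lies in $\operatorname{Star}(v)$'') that implicitly assume the cusp triangulation is simplicial with embedded vertex stars. The paper explicitly allows the triangulation of $\partial\overline{M}$ to be non-simplicial: a cusp triangle may have two or three corners at the same vertex $v$, an edge may have both endpoints at $v$, and both triangles adjacent to an edge of the link of $v$ may belong to the star of $v$. In that generality, your claims that crossings of edges containing $v$ contribute nothing, and that exactly one adjacent triangle lies in the star, can fail as literally stated. The fix is routine and preserves your idea: work with corners rather than vertices. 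Each arc of $\gamma$ inside a triangle contributes $+1$ to the corner opposite its edge of entry and $-1$ to the corner opposite its edge of exit; pulling these arcs back to the abstract star of $v$ (the disk formed by the corners around $v$, glued in cyclic order), or to the universal cover of the cusp torus, a contribution at $v$ occurs exactly when the pulled-back arc crosses the outer boundary of that disk, with sign $+1$ on entering and $-1$ on exiting, while crossings of radial edges contribute $0$. Since the pullback of the closed curve $\gamma$ consists of properly embedded arcs (plus possibly closed components meeting only radial edges), entries and exits balance and each edge sum is unchanged, which is exactly the conclusion you wanted.
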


In other words, given an angle structure $(\theta)$, one may increase (resp. decrease) by $t$ all the angles opposite edges crossed by $\gamma$, according to the direction of crossing (with multiplicity). An equivalent and sometimes useful way of seeing the deformation $D^\gamma$ is as follows: whenever $\gamma$ traverses a triangle $abc$ by entering through $ab$ and leaving through $bc$, increase the angle at $c$ and decrease the angle at $a$. See Figure \ref{fig:perturbation}.

\begin{remark}
Suppose that $\gamma$ crosses a boundary triangle of a (truncated) tetrahedron $\Delta$. Then the vector $D^\gamma$ deforms the dihedral angles of four edges in $\Delta$, which are adjacent to all four boundary triangles of $\Delta$. As a result, the deformation $D^\gamma$ can affect the shapes of boundary triangles that do not intersect $\gamma$, including truncation triangles that belong to completely different components of $\partial \overline{M}$.
\end{remark}


\begin{remark}\label{remark:edge-def}
Let $\epsilon$ be an edge in the interior of $M$ that connects two vertices $v,v' \in \partial \overline{M}$. Then the clockwise loops $\gamma_v$ and $\gamma_{v'}$ that encircle $v$ and $v'$, respectively,  induce \emph{identical} deformations $D^{\gamma_v}=D^{\gamma_{v'}} \in T_p GAS(\tau)$. We call this common deformation  $D^\epsilon$. This is in contrast to other curves $\gamma$, each of which lives on a particular boundary component of $\partial \overline{M}$.
\end{remark}


The deformation $D^{\gamma_v}$ is illustrated in Figure \ref{fig:perturbation}, right. A comparison of this picture with Figure \ref{fig:veeringvertices} should convince the reader that $D^{\gamma_v}$ (resp. $-D^{\gamma_v}$) can be used to unflatten the triangles in the two fans of a blue (resp. red) vertex $v$, although it might place a negative angle at the thin vertices of the two wide triangles incident to $v$. The next section develops this idea.

\begin{figure}
\begin{center}
\psfrag{p}{$+$}
\psfrag{m}{$-$}
\psfrag{j}{$j_e$}
\psfrag{jj}{$j'_e$}
\psfrag{e}{$e$}
\psfrag{g}{$\gamma$}
\psfrag{gv}{$\gamma_v$}
\psfrag{v}{$v$}
\includegraphics[width=12cm]{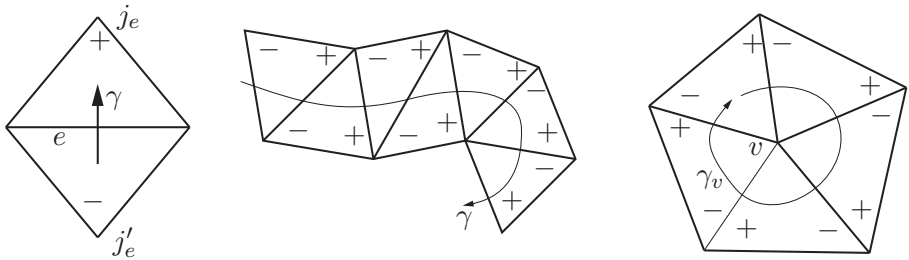}
\caption{Various (segments of) curves $\gamma$ and the associated perturbations $D^\gamma$, expressed via signs $+$ and $-$.}
\label{fig:perturbation}
\end{center}
\end{figure}

\section{Rescuing the zeros}\label{sec:rescuing}


In this section, we describe an algorithm to deform any veering structure to a positive angle structure, by applying various deformations $D^\gamma$. This algorithm will \emph{rescue} (that is, unflatten) the zero angles of the tetrahedra, one at a time. The hinge tetrahedra are rescued in Lemma \ref{lemma:hinge-rescue}, and the non-hinge tetrahedra in Lemma \ref{lemma:nonhinge-rescue}. Together, these lemmas imply Theorem \ref{thm:main}.

%

For each edge $\epsilon$ in the interior of $M$, let $D^\epsilon$ be the clockwise deformation about $\epsilon$, as described in Remark \ref{remark:edge-def} and shown in Figure \ref{fig:perturbation}, right. Define
\begin{equation}\label{eq:hinge-def}
D := \sum_{\epsilon \text{ blue}} D^\epsilon - \sum_{\epsilon \text{ red}} D^\epsilon.
\end{equation}

\begin{lemma}\label{lemma:hinge-rescue}
Let $(\theta)$ be a veering taut angle structure on $M$. Choose any $t\in (0, \pi/4)$, and deform $(\theta)$ to the generalized angle structure $(\theta') = (\theta) + tD$, for the deformation $D$ in \eqref{eq:hinge-def}. Then $(\theta')$ has the following properties:
\begin{enumerate}
\item\label{hinges-saved} Every hinge tetrahedron (or triangle) has positive angles.
\item\label{nonhinge-partial} Every non-hinge tetrahedron has non-negative angles.
\item\label{0-ladderpole}  All $0$--angles are opposite ladderpole segments in Figure \ref{fig:cuspview}.
\end{enumerate}
\end{lemma}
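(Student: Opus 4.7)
The plan is to compute the effect of $D$ on each tetrahedron $\Delta$ directly and verify the three claims by a case analysis on whether $\Delta$ is a hinge. The key local observation is that $D^\epsilon$ affects the angles of $\Delta$ if and only if $\epsilon$ is an edge of $\Delta$: the loop $\gamma_v$ about a $0$--cell $v\in\bdy\overline{M}$ visits precisely the cusp triangles of tetrahedra containing the edge of $M$ that truncates to $v$, so only the six edges of $\Delta$ contribute.

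Using Lemma \ref{ozzie-veering} and Figure \ref{fig:onetetrahedron}, label the three angle variables of $\Delta$ by $\theta_w,\theta_b,\theta_r$ at the wide, blue, and red pairs of opposite edges, so that $(\theta_w,\theta_b,\theta_r)=(\pi,0,0)$ in the veering taut structure. The two blue edges of $\Delta$ are blue edges of $M$ and the two red edges red, while the two wide edges $d_1,d_2$ carry colors $c_1,c_2\in\{\text{blue},\text{red}\}$ inherited from adjacent tetrahedra, with $c_1=c_2$ iff $\Delta$ is non-hinge. For a single edge $\epsilon$ of $\Delta$ with chosen endpoint $v\in\bdy\overline{M}$, the recipe of Definition \ref{def:lead-trail} (Figure \ref{fig:perturbation}, right) says that $D^\epsilon=D^{\gamma_v}$ adds $+1$ to the angle at the next corner of the cusp triangle $T$ of $\Delta$ at $v$ counterclockwise from $v$, subtracts $1$ from the angle at the next corner clockwise from $v$, and leaves the angle at $v$ unchanged. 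Applying this to each of the six edges of $\Delta$ (with sign $+$ on the blue ones, $-$ on the red ones, and $\sigma_i=\pm1$ on the wide ones according to $c_i$), and using the corner coloring of Figure \ref{fig:onetetrahedron} (wide on top, blue bottom-left, red bottom-right, with counterclockwise order top, bottom-left, bottom-right), the contributions collapse to
\[
D\bigl|_\Delta(\theta_w,\theta_b,\theta_r)\;=\;(\sigma_1+\sigma_2)(0,+1,-1)\;+\;(-4,+2,+2).
\]

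When $\Delta$ is a hinge, $\sigma_1+\sigma_2=0$, so the deformed angles are $(\pi-4t,\,2t,\,2t)$, strictly positive for $t\in(0,\pi/4)$; this gives (\ref{hinges-saved}). When $\Delta$ is a non-hinge with $c_1=c_2=\text{blue}$ (resp.\ red), the deformed angles are $(\pi-4t,\,4t,\,0)$ (resp.\ $(\pi-4t,\,0,\,4t)$): nonnegative with exactly one $0$, giving (\ref{nonhinge-partial}). For (\ref{0-ladderpole}), in the non-hinge blue case the surviving $0$ is $\theta_r$, sitting at the red (bottom-right) corner of every cusp triangle of $\Delta$; its opposite side in the cusp triangle joins the top corner (of color $c_1=\text{blue}$) to the bottom-left (blue) corner, so both endpoints of that edge are blue and the edge is a ladderpole segment by the characterization recalled in Section \ref{sec:observations}. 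The non-hinge red case is symmetric.

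The main obstacle is purely bookkeeping: one must match the clockwise convention of Remark \ref{remark:edge-def} with the orientation of $\bdy\overline{M}$ used throughout the figures, so that the pattern produced by $D^{\gamma_v}$ is indeed $(0,+1,-1)$ at (corner $v$, ccw-next, cw-next) rather than the opposite. Once that is settled, verifying the three claims reduces to the short arithmetic above.
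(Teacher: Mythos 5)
Your proposal is correct and follows essentially the same route as the paper: restrict $D$ to a fixed tetrahedron, sum the six per-edge contributions (exactly as the paper does, treating the edges one at a time even if identified in $M$), split into hinge versus non-hinge according to the diagonal colors to get $(\pi-4t,2t,2t)$ versus $(\pi-4t,4t,0)$ or $(\pi-4t,0,4t)$, and locate the surviving zero opposite an edge with same-colored endpoints, i.e.\ a ladderpole segment. The only difference is cosmetic packaging (your $(\sigma_1+\sigma_2)(0,+1,-1)+(-4,+2,+2)$ formula versus the paper's verbal bookkeeping), and your stated sign convention for $D^{\gamma_v}$ agrees with the one the paper fixes in Figure \ref{fig:perturbation}.
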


In fact, after applying the deformation $tD$, every non-hinge triangle will have precisely one vanishing angle, namely the one opposite the ladderpole\footnote{It is worth describing what happens in the special case of punctured torus bundles. After applying Lemma \ref{lemma:hinge-rescue}, in the notation and terminology of \cite{gf:punctured-torus}  we have ``$w_i\equiv 2t$'' for all $i$, so all ``hinge conditions'' are satisfied and one only needs to relax the ``concavity conditions''.}. The vanishing angles are marked green in Figure \ref{fig:cuspview}.

\begin{proof}
Fix a tetrahedron $\Delta$, hinge or non-hinge. It has two blue edges with $0$ angles (call these $e_1, e_2$), two red edges with $0$ angles (call these $e_3, e_4$), and two diagonals with $\pi$ angles (call these $e_5, e_6$). We will treat the deformations $D^{e_j}$ one at a time, as though $e_1, \ldots, e_6$ are distinct edges in $M$. For, if some of these edges (for concreteness, $e_1$ and $e_2$) are identified to the same edge $e \subset M$, the deformation of $D^e$ affects $\Delta$ in two different ways, one that corresponds to $e_1$ and the other that corresponds to $e_2$. Thus the cumulative effect of the deformations $D^{e_j}$ will be the same, regardless of whether 
some edges of $\Delta$ are identified.

First, consider the four thin edges of $\Delta$.  The clockwise deformation $t D^{e_1}$  decreases the $\pi$--angles by $t$ and increases the red $0$--angles by $t$. The effect of $t D^{e_2}$ is exactly the same. In a similar fashion, each of the counterclockwise deformations $-t D^{e_3}$ and $-tD^{e_4}$ decreases the $\pi$--angles by $t$ and increases the blue $0$--angles by $t$.
We conclude that after performing the deformations along the four thin edges of $\Delta_i$, the dihedral angles of the tetrahedron become 
\begin{equation}\label{eq:hinge-angles}
(\pi-4t, 2t, 2t).
\end{equation}

Next, consider the diagonals $e_5, e_6$ of $\Delta$. Observe that deformations $D^{e_5}$ and $D^{e_6}$ along opposite edges of $\Delta$ have precisely the same effect on $\Delta$. If $\Delta$ is a hinge tetrahedron, then its two diagonals have opposite colors, and the effect of $tD^{e_5} -t D^{e_6}$ cancels out completely. Thus $\Delta$ ends up with angles $(\pi-4t, 2t, 2t),$ which are always positive for $t \in (0, \pi/4)$. This proves \eqref{hinges-saved}.

If $\Delta$ is not a hinge and both diagonals are red, then each of the deformations $tD^{e_5}$ and $tD^{e_6}$ increases the thin red angles by $t$, and decreases the thin blue angles by $t$. Thus, after performing the deformations along both $e_5$ and $e_6$, the angles of $\Delta$ become 
\begin{equation}\label{eq:nonhinge-angles}
(\pi-4t, 4t, 0), 
\end{equation}
where the blue angles are $0$. 
%
Observe in Figure \ref{fig:cuspview} (or Figure \ref{fig:veeringvertices}) that in a non-hinge triangle coming from a tetrahedron with red diagonals, the ladderpole connects red vertices and is opposite the blue thin angle. Thus $\Delta$ will have non-negative angles everywhere, and $0$ precisely at the angles opposite the ladderpole segments.

If $\Delta$ has blue diagonals, the effect is exactly symmetric. The deformations along $e_5$ and $e_6$ will make the angles of $\Delta$ become $(\pi-4t, 0, 4t)$, with zeros at the thin red angles  opposite the ladderpole segments. This proves \eqref{nonhinge-partial} and \eqref{0-ladderpole}.
\end{proof}

Next, we rescue the non-hinge tetrahedra, using the following inductive procedure.

\begin{lemma}\label{lemma:nonhinge-rescue}
Suppose that the tetrahedra $\Delta_1, \ldots, \Delta_n$ are endowed with a generalized angle structure $(\theta) = (\theta_1, \ldots, \theta_{3n})$, such that these angle assignments satisfy properties \eqref{hinges-saved}, \eqref{nonhinge-partial}, and \eqref{0-ladderpole} of Lemma \ref{lemma:hinge-rescue}. Suppose as well that some angle $\theta_j$ is $0$. 

Then there is a homotopically trivial closed curve $\gamma \subset \bdy M$, such that for small $t > 0$ the structure $(\theta') = (\theta) + t D^{\gamma}$ again satisfies properties \eqref{hinges-saved}, \eqref{nonhinge-partial}, and \eqref{0-ladderpole} of Lemma \ref{lemma:hinge-rescue}, but has strictly fewer vanishing angles than $(\theta)$.
\end{lemma}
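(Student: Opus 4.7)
The plan is to pick an arbitrary vanishing angle of $(\theta)$ and exhibit a small contractible loop $\gamma$ on $\bdy \overline{M}$ whose leading--trailing deformation $D^\gamma$ unflattens it, while preserving the three properties of Lemma~\ref{lemma:hinge-rescue} and creating no new zeros for sufficiently small $t > 0$. Throughout, let $t_0 \in (0, \pi/4)$ denote the fixed value from Lemma~\ref{lemma:hinge-rescue}, so that every positive angle of $(\theta)$ is bounded below by $\min(2t_0, 4t_0, \pi - 4t_0) > 0$.

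Fix $\theta_j = 0$. By property (3) of Lemma~\ref{lemma:hinge-rescue}, this is the thin angle of some non-hinge cusp triangle $T$ opposite a ladderpole segment; label the vertices of $T$ by $v_0$ (carrying $\theta_j$), $v_\pi$ (wide), and $w$ (the other thin vertex). Then $v_\pi$ and $w$ share a color (say red) and lie on a red ladderpole, while $v_0$ is blue. Choose $\gamma$ to be a small contractible loop encircling the ladderpole segment $wv_\pi$ (or equivalently the vertex $v_\pi$), as sketched in Figure~\ref{fig:cuspview}, with orientation chosen so that the leading--trailing formula yields $D^\gamma_j = +1$: i.e.\ $\gamma$ crosses the pair of edges incident to $v_0$ in $T$ in the direction that pumps the angle at $v_0$ upward. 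The support of $D^\gamma$ is then localized to a handful of cusp triangles near $T$.

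The main obstacle is verifying that $D^\gamma$ preserves properties (1)--(3) and strictly reduces the zero count. For positive angles, smallness of $t$ handles everything, since they are all uniformly bounded below. The delicate check concerns \emph{other} vanishing angles $\theta_{j'} = 0$ affected by $D^\gamma$: unless $D^\gamma_{j'} \geq 0$ for each such $j'$, property (2) is broken for every $t > 0$. This is where the veering ladder structure is essential. By property (3), the surviving zeros all sit opposite ladderpole segments of non-hinge triangles; the consistent coloring of ladderpoles (Observation~\ref{obs:two-neighbors} and the ladder picture in Figure~\ref{fig:cuspview}) makes their positions at the link of $v_\pi$ (or $w$) rigid. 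A case analysis over the few local configurations at $v_\pi, w$ --- hinge vs.\ non-hinge triangles, long vs.\ short fans --- shows that some choice of $\gamma$ (possibly switching between loops around $v_\pi$ and around $w$, and between orientations) satisfies $D^\gamma_{j'} \geq 0$ for every surviving zero $j'$, while still giving $D^\gamma_j = +1$ at the targeted zero.

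With this compatibility check in hand, for $t > 0$ sufficiently small the structure $(\theta') = (\theta) + t D^\gamma$ satisfies properties (1), (2), (3): no positive angle becomes non-positive (by smallness), no surviving zero goes negative (by the sign check), and the targeted zero becomes $t > 0$ so the zero count strictly decreases. Iterating Lemma~\ref{lemma:nonhinge-rescue} finitely many times eliminates all zeros and produces a positive angle structure, thereby completing the proof of Theorem~\ref{thm:main}. The crux throughout is the sign check on zeros in the link of $v_\pi$, and it is precisely the rigidity forced by the veering ladder combinatorics that makes such a $\gamma$ available.
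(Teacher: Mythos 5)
Your construction is genuinely different from the paper's (which routes a long null-homotopic curve down the fan to a hinge triangle, across into the adjacent ladder, and back), and the difference is exactly where your argument breaks. First, a technical slip: crossing ``the pair of edges incident to $v_0$'' never changes the angle at $v_0$ --- the leading--trailing rule puts the $\pm 1$'s at the angles \emph{opposite} the crossed edges, so the only way any curve can raise the coordinate $\theta_j$ is to enter $T$ (or one of the other three cusp triangles of the same tetrahedron carrying $\theta_j$) across the ladderpole segment opposite the zero corner. That forced crossing lowers, in the triangle on the other side of that segment, the corner opposite the same segment; since that shared edge is that neighbor's unique ladderpole edge, this corner is the neighbor's zero whenever the neighbor is a flat non-hinge triangle. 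Right after Lemma \ref{lemma:hinge-rescue} \emph{every} non-hinge triangle is flat, so for a zero sitting in the middle of a long fan all four candidate entry points border flat triangles and \emph{no} deformation $D^\gamma$ can raise that $\theta_j$ while keeping property \eqref{nonhinge-partial}. Hence ``Fix $\theta_j=0$'' arbitrarily is already fatal: one must \emph{choose} which zero to rescue, which the paper does by taking a degenerate tetrahedron glued to a non-degenerate one (such a pair exists since $M$ is connected and hinge tetrahedra are non-degenerate), so that the entry edge $e$ borders a non-flat triangle $T'$.

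Second, even granting a good choice of zero, your small loop does not suffice. The loop around $v_\pi$ crosses \emph{both} ladderpole edges at $v_\pi$: besides the good crossing of $e=v_\pi w$ into $T$, it also crosses the other ladderpole edge $v_\pi P'$, putting a $-1$ on the corner opposite that edge in the last triangle of the other fan of $v_\pi$. If that fan is long, this triangle is a hinge and all is well; if it is short, it is a non-hinge triangle whose zero sits precisely at that corner, and your deformation makes it negative for every $t>0$. The loop around $w$ has the symmetric defect at the second ladderpole edge of $w$. So the asserted ``case analysis over the few local configurations'' is the entire content of the lemma, and the obvious candidates fail; nothing guarantees that some small local loop always exists. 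The paper's curve is built to avoid exactly this: after the single crossing of $e$ (out of the non-flat $T'$, into $T$), it only exits triangles across the edge joining the two thin corners while descending the fan (decreasing only wide, hence positive, angles), exits the ladder at a hinge triangle (positive by \eqref{hinges-saved}), and closes up through $T'$ (positive by hypothesis), so no zero is ever decreased. Your final paragraph (smallness of $t$ plus iteration) is fine, but it rests on the unproved sign check.
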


\begin{proof}
Call a tetrahedron \emph{degenerate} if it has an angle labeled $0$, and \emph{non-degenerate} otherwise. Similarly, call a boundary triangle \emph{flat} if it has an angle of $0$, and \emph{non-flat} otherwise.

Since $M$ is connected, there must be a degenerate tetrahedron $\Delta$ that is glued along face $f$ to a non-degenerate tetrahedron $\Delta'$. By looking at the appropriate truncated vertex of $f$ (three possible choices), we can assume that in $\partial \overline{M}$, we have the 0-angle in a triangle $T \subset \Delta$, opposite an edge $e$ shared with a non-flat triangle $T' \subset \Delta'$. By property  \eqref{0-ladderpole}, $e$ must be a ladderpole segment that forms part of the boundary between annuli $\lad$ and $\lad'$, with $T \subset \lad$ and $T' \subset \lad'$. See Figure \ref{fig:cuspview}.

Begin the curve $\gamma$ by crossing edge $e$ from $T'$ into $T$. (When crossing into $T$ opposite the vanishing angle, we instantly unflatten the angle of $0$, by definition of $D^\gamma$.) Next, travel vertically through the fan of $T$ in annulus $\lad$, until coming to a hinge triangle that shares an edge with $\lad'$. (Note that a hinge triangle at the top or bottom of the fan of $T$ must be adjacent to $\lad'$; inspection in Figure \ref{fig:cuspview} shows that when traversing the fan of $\lad$, the curve $\gamma$ must exit each triangle of the fan across the edge connecting the two \emph{thin} angles.) Cross back into $\lad'$ at the hinge, and travel in the opposite direction, until $\gamma$ returns to triangle $T'$ and closes up.

Note that when $\gamma$ travels vertically and crosses the rungs of a ladder, it never leaves a triangle opposite an angle of $0$, hence all angles decreased by $D^\gamma$ are strictly positive. In addition, $\gamma$ leaves $\lad$ through a hinge triangle that has positive angles by \eqref{hinges-saved}, and leaves $\lad'$ through a triangle $T'$ that has positive angles by induction hypothesis. Thus $D^\gamma$ never decreases an angle of $0$, and preserves properties \eqref{hinges-saved}, \eqref{nonhinge-partial}, and \eqref{0-ladderpole} while ``rescuing'' at least one tetrahedron.
\end{proof}

We may repeatedly apply Lemma \ref{lemma:nonhinge-rescue} until all angles are positive. This completes our  constructive proof of Theorem  \ref{thm:main}. \qed

\section{A lower bound on the smallest angle} \label{sec:lowerbound}

In this section, we prove Theorem \ref{thm:bound}. We will first prove the theorem under the additional hypothesis that the triangulation $\tau$ is transverse--taut, and then apply a covering argument to extend the result to all veering triangulations.

\begin{proposition}\label{prop:transverse-bound}
Let $\tau$ be a transverse--taut, veering ideal triangulation of $M$. 
Then $\tau$ admits a positive angle structure $(\theta)$, whose angles satisfy
\begin{equation}\label{eq:angle-bound}
\theta_i \: \geq \:  \frac{\pi}{12 \,  d_{\max}^2} \: \geq \:  \frac{\pi}{12 \,  (e_{\max} - 3)^2},
\end{equation}
where $d_{\max}$ is the maximum number of triangles in a fan of any vertex $v \in \bdy \overline{M}$, and $e_{\max} $ the largest degree of an edge of $\tau$.
\end{proposition}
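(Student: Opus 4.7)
The plan is to execute the two-stage unflattening of Section \ref{sec:rescuing} with explicit choices of deformation parameters, and then bound the cumulative angle changes using the ladder structure of $\bdy \overline{M}$. The first stage is to apply Lemma \ref{lemma:hinge-rescue} with $t := \pi/(6 d_{\max})$, yielding hinge angles $(\pi-4t, 2t, 2t)$ and non-hinge angles $(\pi-4t, 4t, 0)$ with the zero opposite a ladderpole segment. The second stage is to take, for every boundary triangle $T$ containing a zero angle, the rescuing loop $\gamma_T$ of Lemma \ref{lemma:nonhinge-rescue}, and apply all deformations $s\, D^{\gamma_T}$ simultaneously, with the uniform coefficient $s := \pi/(12 d_{\max}^2)$. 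The transverse--taut hypothesis (Observation \ref{obs:transverse}) lets us orient each $\gamma_T$ consistently so that it ascends through an ascending ladder containing $T$'s fan and descends through the paired fan in the adjacent descending ladder.

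The combinatorial heart of the argument will be the claim that each dihedral angle $\theta_j$ is decreased by at most $2 d_{\max}$ distinct crossings of the curves $\gamma_T$, each decrement having size $s$. To establish this, one fixes $\theta_j$ and enumerates the pairs $(T, e)$, where $e$ is an edge crossing of $\gamma_T$ contributing a decrement to $\theta_j$; the ladder structure forces every such pair to lie either in the fan containing a corner of $\theta_j$ below it in the ascending direction (at most $d_{\max}-1$ contributions), or in the paired fan across a shared ladderpole in the adjacent ladder (at most $d_{\max}$ contributions). Moreover, by the argument given in the proof of Lemma \ref{lemma:nonhinge-rescue}, zero angles are never decreased by any $\gamma_T$, only increased.

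With this combinatorial bound in hand, the final angles satisfy: rescued zero $\geq s = \pi/(12 d_{\max}^2)$; hinge thin $\geq 2t - 2 d_{\max}\, s = \pi/(6 d_{\max})$; non-hinge $4t$-angle $\geq 4t - 2 d_{\max}\, s = \pi/(2 d_{\max})$; wide angle $\geq \pi - 4t - 2 d_{\max}\, s \geq \pi/6$. All four bounds exceed $\pi/(12 d_{\max}^2)$, proving the first inequality of \eqref{eq:angle-bound}. The second inequality is immediate from the ladder structure: at any interior edge $\epsilon = vv'$ of $M$, the link of $v$ consists of two wide triangles and two nonempty fans of sizes $d_1, d_2 \geq 1$, so $\deg(\epsilon) = d_1 + d_2 + 2 \geq d_{\max} + 3$. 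The principal obstacle is the combinatorial count of decrement pairs: making this rigorous requires a case analysis distinguishing hinge from non-hinge traversals and invoking Observation \ref{obs:transverse} to guarantee the clean pairing of ascending/descending ladders used in the estimate.
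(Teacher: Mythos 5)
Your argument breaks down at the central claim that, when all the loops $\gamma_T$ are applied simultaneously with the \emph{uniform} coefficient $s$, ``zero angles are never decreased by any $\gamma_T$, only increased.'' That property is borrowed from Lemma \ref{lemma:nonhinge-rescue}, but there it relies on the inductive hypothesis that the triangle from which the loop enters is already non-flat; it fails once you attach a loop to \emph{every} flat triangle and run them all at once. Concretely, after Lemma \ref{lemma:hinge-rescue} every $0$--angle sits opposite a ladderpole segment, and the zero dihedral angle of a non-hinge tetrahedron $\Delta$ appears in all four of its cusp triangles. Each of these four triangles gains $+s$ from its own loop (which enters through its ladderpole edge), but whenever the neighbour across that ladderpole segment is itself flat, that neighbour's loop begins by crossing the very same edge, hence \emph{leaves} the cusp triangle of $\Delta$ opposite its zero angle and contributes $-s$. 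When all four ladderpole neighbours are flat --- which happens as soon as fans contain several consecutive non-hinge triangles, e.g.\ for punctured-torus bundles with monodromy $LR^n$ --- the net change to the zero angle is $4s-4s=0$ no matter how small $s$ is, so the angle is never rescued and the resulting structure is not positive. This is precisely why the paper's proof does \emph{not} use uniform coefficients: in \eqref{eq:bounded-rescue} the loop $\gamma_T$ is weighted by the height $H(\sigma(T))$ of the partner triangle, and the identities $H(\sigma(T'_3))=H(T_1)-1$, $H(\sigma(T'_4))=H(T_2)-1$ make the positive and negative contributions to each zero angle telescope to a guaranteed gain of $2\kappa/d_{\max}^2$. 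Some non-uniform choice of this kind (or an equivalent cancellation mechanism) is the missing idea; no choice of a single constant $s$ can work.

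Two further points. Your count that each dihedral angle suffers at most $2d_{\max}$ decrements is not justified: the angle appears in four cusp triangles, each of which can be crossed by on the order of $2d_{\max}$ loops even under the paper's leaner scheme, which (via Observation \ref{obs:transverse}) attaches loops only to triangles of \emph{ascending} ladders; your scheme attaches loops to flat triangles in both ladder types, roughly doubling the family, and with $t=\pi/(6d_{\max})$, $s=\pi/(12d_{\max}^2)$ the thin positive angles $2t$ could then be driven negative. This part is repairable by shrinking $s$ and redoing the arithmetic, unlike the zero-angle cancellation above. Finally, your derivation of $e_{\max}\geq d_{\max}+3$ from the two wide angles and the nonempty opposite fan agrees with the paper and is fine.
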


\begin{remark}\label{rem:ptorus-bound}
The quadratic dependence on $d_{\max}$ is optimal, even in the special case of punctured torus bundles. In the notation of \cite{gf:punctured-torus}, a non-hinge tetrahedron in the $i$-th spot of a syllable $LR^nL$ will have an angle of the form 
$$x_i = 2w_i - (w_{i-1} + w_{i+1}),$$ 
where all parameters must satisfy $w_j \in (0, \pi/2)$. The requirement $x_i > 0$ implies that the sequence of parameters $w_{i-1}, w_i, w_{i+1}$ is concave, hence is called the \emph{concavity condition}. Now, a fan of length $(n+1)$ imposes a concave sequence $w_0, \ldots, w_n$ that must stay at most distance $\pi/2$ above the line segment from $(0, w_0)$ to $(n, w_n)$. Summing by parts twice, we see that the range condition $w_j \in (0, \pi/2)$ cannot be satisfied if $x_i \geq 4\pi/n^2$ for all $i = 1, \ldots, n-1$. 
This indicates that the quadratic behavior is sharp but the constant $\pi/12$ is probably far from sharp.

In fact, \cite[Proposition 10.1]{gf:punctured-torus} implies that when the monodromy is $L^n R^n$ and $(\theta)$ is the \emph{geometric} structure on the triangulation (\emph{i.e.}, the unique angle structure under which the tetrahedra glue up to give the hyperbolic metric on $M$), the smallest angle will be approximately $4\pi^2/n^3$.
\end{remark}

\begin{proof}[Proof of Proposition \ref{prop:transverse-bound}]
The proof will use the same deformations $D^\gamma$ as in Lemmas \ref{lemma:hinge-rescue} and \ref{lemma:nonhinge-rescue}, with careful choices of coefficient $t$.

Given a transverse orientation on the faces of $\tau$ (say, towards the reader in the first panel of Figure \ref{fig:onetetrahedron}), Observation \ref{obs:transverse} implies that each tetrahedron has exactly two tips in ascending ladders, and two tips in descending ladders.
Consider the collection $\mathcal{T}$ of all cusp triangles belonging to ascending ladders: $\mathcal{T}$ has cardinality $2n$ if there are $n$ tetrahedra. The set $\mathcal{T}$ is naturally endowed with a fixed-point-free involution $\sigma$ taking each triangle to the only other triangle in $\mathcal{T}$ that belongs to the same tetrahedron.

Let $T$ be a triangle in $\mathcal{T}$, belonging to some ascending ladder $\mathcal{L}$. Define the \emph{height} of $T$, denoted $H(T)$, to be the length of the shortest path \emph{down} the ladder $\mathcal{L}$ that connects $T$ to a hinge triangle: for example, hinge triangles have height $0$; their nonhinge neighbors \emph{immediately above} in the ascending ladder have height $1$, and so on. There is no obvious \emph{a priori} relationship between $H(T)$ and $H(\sigma(T))$, except that they can only vanish simultaneously (when $T, \sigma(T)$ belong to a hinge tetrahedron).

Further, as in the proof of Lemma \ref{lemma:nonhinge-rescue}, define $\gamma_T$ to be the oriented path that enters $T$ through the ladderpole edge (leaving another ladder $\mathcal{L}'\neq \mathcal{L}$), travels $H(T)$ rungs down the ladder $\mathcal{L}$ until it reaches a hinge triangle, then crosses back into $\mathcal{L}'$ and 
travels across the rungs of $\mathcal{L}'$ to close up. For a hinge triangle $T$, by convention we define $\gamma_T$ to be the trivial (\emph{i.e.}\ empty) path.

Consider the generalized angle structure $(\theta')$ given by Lemma \ref{lemma:hinge-rescue} with $t=\frac{\pi}{6}$. By equations \eqref{eq:hinge-angles} and \eqref{eq:nonhinge-angles}, the angles of $(\theta')$ have the following properties:
\begin{itemize}
\item Hinge triangles are equilateral, with angles $(\frac{\pi}{3},\frac{\pi}{3},\frac{\pi}{3})$;
\item Nonhinge triangles have angles $(\frac{\pi}{3},\frac{2\pi}{3},0)$, with $\frac{\pi}{3}$ at the wide angle and $0$ belonging to a long fan.
\end{itemize}

Define $d$ to be the number of \emph{non-hinge} triangles in the longest fan in an ascending ladder.
Equivalently, $d=\max_{T \in \mathcal{T}} H(T)$ is the largest number of adjacent green arcs in an ascending ladder in Figure \ref{fig:cuspview}. Note that by the definition of $d_{\max}$, one has $d + 2 = d_{\max}$. (The inequality $d+2\leq d_{\max}$ is immediate, since $d_{\max}$ includes hinge triangles and counts both ascending and descending ladders. Equality holds because to each fan in a descending ladder at an endpoint of an edge $e$ of $M$ corresponds a fan of the same length in an ascending ladder at the other endpoint of $e$.)
Also, by construction, every cusp triangle (whether in the ascending collection $\mathcal{T}$ or not) is crossed by at most $2d$ curves $\gamma_T$: this is because the curves $\gamma_T$ are nested by families of at most $d$ along the ladderpoles, and each ladder has $2$ poles.

If $d=0$, then all tetrahedra are hinges, $d_{\max}= 2$ and $e_{\max}=6$. Inequality \eqref{eq:angle-bound} easily holds for the equilateral hinge triangles. Thus we may suppose that  $d\geq 1$.

\begin{claim}
Assume that $d \geq 1$, and let $\kappa=\frac{\pi}{24}$. Then the angle structure 
\begin{equation}\label{eq:bounded-rescue}
 (\theta) :=(\theta')+\sum_{T \in \mathcal{T}} \frac{\kappa}{d_{\max}^2}\cdot H(\sigma(T)) D^{\gamma_T}
\end{equation}
is positive, with smallest angle equal to at least $2\kappa/ d_{\max}^2$. (We may see the sum above as being over \emph{all} triangles $T \in \mathcal{T}$, even though hinge triangles contribute $0$.)
\end{claim}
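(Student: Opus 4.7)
My plan is to bound, for every dihedral angle of every tetrahedron, the net deformation caused by $\sum_{T\in\mathcal{T}} c_T D^{\gamma_T}$ with $c_T=\kappa H(\sigma(T))/d_{\max}^2$, and certify that every angle of $(\theta)$ is at least $2\kappa/d_{\max}^2$. I would begin with a \emph{bulk estimate}: by the remarks preceding the claim, each cusp triangle is crossed by at most $2d$ of the curves $\gamma_T$, and each coefficient satisfies $c_T\leq \kappa d/d_{\max}^2$, so the total perturbation at any single dihedral angle has magnitude at most $2d\cdot \kappa d/d_{\max}^2 = 2\kappa d^2/d_{\max}^2 \leq 2\kappa=\pi/12$. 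Consequently every angle equal to $\pi/3$ or $2\pi/3$ in $(\theta')$ stays at least $\pi/4$ in $(\theta)$, comfortably above $2\kappa/d_{\max}^2=\pi/(12\, d_{\max}^2)$, and this handles all three angles of every hinge tetrahedron and the wide and same-color-thin angles of every non-hinge tetrahedron.

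The remaining case is the vanishing dihedral angle $\theta_j$ of each non-hinge tetrahedron, which lives at the opposite-color-thin (``blue'') vertex of each of the tet's four cusp triangles and is always opposite that triangle's ladderpole edge. Since rung crossings only affect angles opposite rungs, $\Delta\theta_j$ decomposes as a sum of four contributions, one per cusp triangle; by Observation \ref{obs:transverse}, exactly two of the cusps, namely $T$ and $\sigma(T)$, lie in ascending ladders, while the other two lie in descending ladders. Arguing as in the proof of Lemma \ref{lemma:nonhinge-rescue}, the only curves in our sum that cross the ladderpole edges of $T$ and $\sigma(T)$ are $\gamma_T$ and $\gamma_{\sigma(T)}$ themselves, at the starts of their paths, entering their respective cusps: no other $\gamma_{T''}$ can cross them, because the alternative would force an ascending hinge triangle to have $T$ or $\sigma(T)$ as its ladderpole neighbor, but these cusps are non-hinge. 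Since $\sigma$ is an involution, this yields an ascending contribution $c_T + c_{\sigma(T)} = (\kappa/d_{\max}^2)(H(\sigma(T)) + H(T)) \geq 2\kappa/d_{\max}^2$, using $H(T), H(\sigma(T)) \geq 1$ for a non-hinge tetrahedron.

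The hard part, and the step I expect to be the main obstacle, is to show that the two descending cusps do not spoil this lower bound. For each descending cusp $T_d$ of the tetrahedron, its ladderpole edge is shared with some triangle $T_d^{\sharp}\in\mathcal{T}$ in an adjacent ascending ladder, and the curves in our sum that cross this edge contribute either $-c_{T_d^{\sharp}}$ (from the start of $\gamma_{T_d^{\sharp}}$, when $T_d^{\sharp}$ is non-hinge and the curve exits $T_d$) or $+\sum_{T''} c_{T''}$ summed over those $T''\in\mathcal{T}$ having $T_d^{\sharp}$ as their fan-bottom hinge (when $T_d^{\sharp}$ is a hinge, so that $c_{T_d^\sharp}=0$ and each $\gamma_{T''}$ crosses this edge entering $T_d$). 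The weighting $c_T=\kappa H(\sigma(T))/d_{\max}^2$, using the partner's height rather than $T$'s own, is precisely chosen so that these positive and negative contributions telescope along the ascending fans flanking $T_d$'s descending ladder. I would verify this telescoping explicitly by summing over the non-hinge and hinge ``$T_d^\sharp$'' cases along both ladderpoles of the descending ladder, to conclude that the combined descending contribution is nonnegative, whereupon $\Delta\theta_j\geq 2\kappa/d_{\max}^2$ and the claim follows.
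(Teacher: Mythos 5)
Your treatment of the flat angle breaks down at exactly the step you flagged: the combined descending contribution is \emph{not} nonnegative, and no telescoping confined to the descending side can make it so. Keep the paper's notation: $T_1,T_2\in\mathcal{T}$ and $T_3,T_4\notin\mathcal{T}$ are the cusp triangles of a non-hinge $\Delta$, and $T'_3,T'_4\in\mathcal{T}$ are the ladderpole neighbors of $T_3,T_4$. If $T'_3$ is non-hinge, the only curve crossing the ladderpole edge of $T_3$ is $\gamma_{T'_3}$, and it \emph{exits} $T_3$, so the contribution to $\theta_j$ at $T_3$ is exactly $-\frac{\kappa}{d_{\max}^2}H(\sigma(T'_3))$, with nothing positive at $T_3$ available to cancel it: the positive ``hinge-exit'' crossings you want to telescope against occur on the ladderpole edges of \emph{other} descending triangles, hence feed into the flat angles of other tetrahedra, not into $\theta_j$. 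Thus the descending side contributes $-\frac{\kappa}{d_{\max}^2}\bigl(H(\sigma(T'_3))+H(\sigma(T'_4))\bigr)$, which is strictly negative as soon as $H(T_1)>1$ or $H(T_2)>1$, so ``ascending $\geq 2\kappa/d_{\max}^2$ and descending $\geq 0$'' cannot both hold. The cancellation that actually saves the bound pairs the descending losses against the \emph{ascending} gains of the same tetrahedron: since $\sigma(T'_3)$ and $\sigma(T'_4)$ are the triangles one step below $T_1$ and $T_2$ in their ascending ladders (Figure \ref{fig:fourcusps}), one has $H(\sigma(T'_3))=H(T_1)-1$ and $H(\sigma(T'_4))=H(T_2)-1$, whence the net change of $\theta_j$ is at least $\frac{\kappa}{d_{\max}^2}\bigl(H(T_2)+H(T_1)-(H(T_1)-1)-(H(T_2)-1)\bigr)=\frac{2\kappa}{d_{\max}^2}$, with equality possible. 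You correctly sensed that the weights $H(\sigma(T))$ are chosen to force a cancellation, but you assigned the cancellation to the wrong pair of terms; the identity above is the missing idea.

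There is also a quantitative slip in your bulk estimate: a dihedral angle of $\Delta$ appears at a corner of \emph{each} of the four cusp triangles of $\Delta$, so crossings of all four triangles can perturb it, giving the bound $4\cdot 2d\cdot \kappa d/d_{\max}^2=8\kappa d^2/d_{\max}^2$ rather than $2\kappa d^2/d_{\max}^2$. Since $d=d_{\max}-2$, this can be close to $\pi/3$, so ``stays at least $\pi/4$'' is not justified. The step is salvageable exactly as in the paper: $\pi/3-8\kappa d^2/d_{\max}^2=8\kappa(d_{\max}^2-d^2)/d_{\max}^2=8\kappa(4d+4)/d_{\max}^2\geq 64\kappa/d_{\max}^2\geq 2\kappa/d_{\max}^2$, using $d_{\max}=d+2$. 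Your identification of which crossings affect the flat angle (only ladderpole-edge crossings, with $\gamma_{T_1},\gamma_{T_2}$ entering positively) does agree with the paper and is fine.
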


This claim clearly finishes the proof that all angles are at least $2\kappa/ d_{\max}^2$. The remaining inequality in the Proposition (relating angles to the maximum edge degree $e_{\max}$) follows from the observation that $e_{\max} \geq d_{\max} + 3$: for, $d_{\max}$ only counts the thin angles in one fan at $v$, but there must also be two wide angles, and at least one angle in the other fan. Thus all that remains is to prove the claim.


First, consider a hinge tetrahedron $\Delta$ which contributes cusp triangles $T_1, T_2 \in \mathcal{T}$ and $T_3, T_4 \notin \mathcal{T}$. Each of $T_1, \dots, T_4$ is crossed by at most $2d$ curves $\gamma_T$, and each curve carries weight at most
 $$\frac{ \kappa \cdot \max H }{ d_{\max}^2 } =\frac{\kappa  d }{d_{\max}^2}.$$ Each angle of $\Delta$ starts out with a value of $\pi/3 = 8 \kappa$ and is affected by at most $
4 \cdot 2d \cdot \kappa d / d_{\max}^2$. Thus all the $(\theta)$-angles of $\Delta$ are at least 
$$
\theta_j 
\geq 
8 \kappa - \frac{ 8 \kappa d^2 }{ d_{\max}^2 } 
= 8 \kappa \left( \frac{d_{\max}^2 - d^2 }{   d_{\max}^2}  \right)
= 8 \kappa \left( \frac{(d+2)^2 - d^2 }{   d_{\max}^2}  \right)
= 8 \kappa \left( \frac{4 d + 4 }{   d_{\max}^2}  \right)
\geq \frac{64 \kappa } {   d_{\max}^2} ~. \\
$$

Next, consider a non-hinge triangle $\Delta$. The same calculation as above applies to show that the angles of $\Delta$ that are nonzero for the angle structure $(\theta')$ are still at least $64\kappa/d_{\max}^2$ for the angle structure $(\theta)$. It only remains to deal with the flat angle of $\Delta$.

\begin{figure}
\begin{center}
\psfrag{t1}{$T_1$}
\psfrag{t2}{$T_2$}
\psfrag{t3}{$T_4$}
\psfrag{t4}{$T_3$}
\psfrag{p3}{$T'_4$}
\psfrag{p4}{$T'_3$}
\psfrag{s3}{$\sigma(T'_4)$}
\psfrag{s4}{$\sigma(T'_3)$}
\includegraphics[width=14cm]{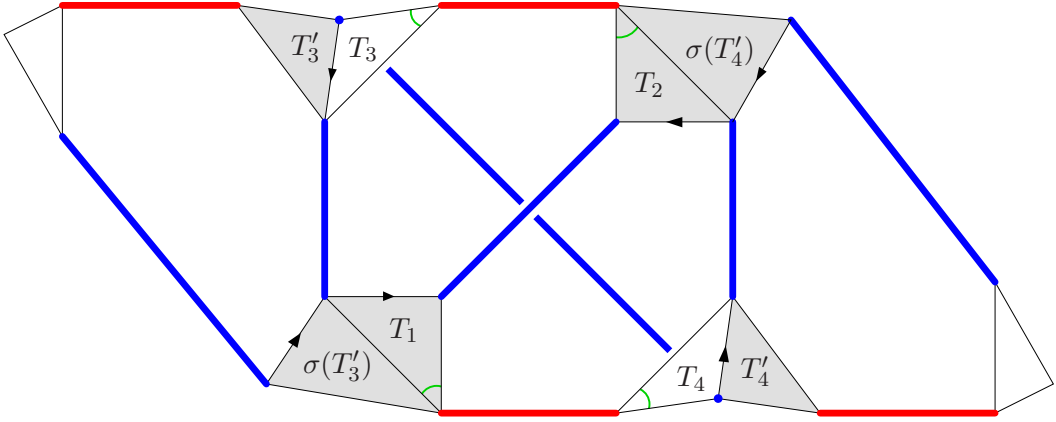}
\caption{A non-hinge tetrahedron $\Delta$ and its cusp triangles $T_1,\dots, T_4$, together with two adjacent tetrahedra. Inside each cusp, the ladderpole edges (always connecting vertices of the same color) are shown with their upward orientation induced by the transverse--taut structure. Triangles of the ascending collection $\mathcal{T}$ are shaded, and the $0$-angles of $\Delta$ for $(\theta')$ are marked in green as in Figure \ref{fig:cuspview}. For a non-hinge tetrahedron with four red edges, the figure would be reflected across a line of slope $+1$, with red and blue interchanged.}
\label{fig:fourcusps}
\end{center}
\end{figure}

We still assume that $\Delta$ has cusp triangles $T_1, T_2 \in \mathcal{T}$ and $T_3, T_4 \notin \mathcal{T}$. By definition, $\sigma$ exchanges $T_1$ and $T_2$. Let $T'_3\in \mathcal{T}$ be the neighbor of $T_3$ sharing a ladderpole segment with $T_3$, and $T'_4\in \mathcal{T}$ be the neighbor of $T_4$ sharing a ladderpole segment with $T_4$. The following deformations $D^{\gamma_T}$ all affect the 0-angle of $\Delta$:
\begin{itemize}
\item  $D^{\gamma_{T_1}}$ (positively because $\gamma_{T_1}$ \emph{enters} $T_1$ through the ladderpole);
\item  $D^{\gamma_{T_2}}$ (positively because $\gamma_{T_2}$ \emph{enters} $T_2$ through the ladderpole);
\item $D^{\gamma_{T'_3}}$ (negatively because $\gamma_{T'_3}$ \emph{leaves} $T_3$ through the ladderpole);
\item $D^{\gamma_{T'_4}}$ (negatively because $\gamma_{T'_4}$ \emph{leaves} $T_4$ through the ladderpole).
\end{itemize}
Any additional deformations $D^{\gamma_T}$ may only affect the $0$-angle of $\Delta$ positively: this occurs when $\gamma_T$ happens to enter a triangle of $\Delta$ when it crosses back into the descending ladder $\mathcal{L}'$ (in other words, when $T'_3$ or $T'_4$ is a hinge and happens to be the nearest hinge below $T$ in the ascending ladder $\mathcal{L}$).

The key observation is that $\sigma(T'_3)$ and $\sigma(T'_4)$ are the neighbors of $T_1$ and $T_2$ \emph{down} the ascending ladders: see Figure \ref{fig:fourcusps}. This implies 
\begin{equation}
H(\sigma(T'_3))=H(T_1)-1 \qquad \text{and} \qquad H(\sigma(T'_4))=H(T_2)-1. 
\end{equation}
As a result, by equation \eqref{eq:bounded-rescue}, the angle $\theta'_j = 0$ of  $\Delta$ will become
\begin{eqnarray*}
\theta_j & \geq &
 \frac{\kappa}{d_{\max}^2} \cdot \left ( 
H(\sigma(T_1)) D^{\gamma_{T_1}} + 
H(\sigma(T_2)) D^{\gamma_{T_2}} + 
H(\sigma(T'_3)) D^{\gamma_{T'_3}} + 
H(\sigma(T'_4)) D^{\gamma_{T'_4}} \right )_j \\
&\geq & \ \frac{\kappa}{d_{\max}^2}\cdot \left ( 
H(\sigma(T_1))+ H(\sigma(T_2))- H(\sigma(T'_3))- H(\sigma(T'_4))\right ) \\
&=&  \frac{\kappa}{d_{\max}^2} \cdot \left ( 
H(T_2)+ H(T_1)- [H(T_1)-1]- [H(T_2)-1] \right ) \\
& = &  \frac{2\kappa}{d_{\max}^2},
\end{eqnarray*}
completing the proof.
\end{proof}

To prove Theorem \ref{thm:bound} in general, we need to lift the angle structure $(\theta)$ to a transverse--taut double cover. The following lemma establishes the existence of such a cover.

\begin{lemma}\label{lemma:transverse-cover}
Let $\tau$ be an ideal triangulation of $M$ with a taut angle structure. If $\tau$ is not transverse--taut, then there is a double cover $N \to M$, such that the lift of $\tau$ to $N$ is transverse--taut. 
\end{lemma}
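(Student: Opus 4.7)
The plan is to construct a homomorphism $\phi \colon \pi_1(M) \to \mathbb{Z}/2\mathbb{Z}$ whose kernel corresponds to the desired double cover $N$. On each tetrahedron $\Delta_i$, the taut structure admits precisely two local transverse orientations compatible with the transverse--taut condition: one may declare either of the two pairs of faces sharing a $\pi$-edge to be the ``inward-pointing'' pair. Call these two local choices $\sigma_i^\pm$. The obstruction to splicing local choices into a globally coherent transverse orientation will be the class $\phi$.

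Concretely, I pick an arbitrary $\sigma_i$ on each $\Delta_i$. For each interior face $F$ shared between $\Delta_i$ and $\Delta_j$, set $c(F) = 0$ if $\sigma_i$ and $\sigma_j$ induce the same transverse orientation on $F$, and $c(F) = 1$ otherwise. View $c$ as a $\mathbb{Z}/2\mathbb{Z}$-valued $1$-cochain on the dual $1$-skeleton of $\tau$ in $M$. Note that the hypothesis that $\tau$ is not transverse--taut is precisely the statement that no choice of the $\sigma_i$ makes $c$ vanish identically, i.e.\ that $c$ is not a coboundary.

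The main step, and the principal obstacle, is to check that $c$ is a $1$-cocycle: that around every interior edge $e$ of $\tau$, the sum $\sum_F c(F)$ over the faces adjacent to $e$ vanishes modulo $2$. Enumerate the tetrahedra around $e$ cyclically as $\Delta_1, \ldots, \Delta_k$, with $F_i = \Delta_i \cap \Delta_{i+1}$, and record the transverse orientation of $F_i$ as $\alpha_i \in \{\pm 1\}$ (say $+1$ when the arrow points from $\Delta_i$ to $\Delta_{i+1}$). A direct case analysis at each $\Delta_i$ yields the following propagation rule: if $e$ is a $\pi$-edge of $\Delta_i$, then the two faces $F_{i-1}, F_i$ adjacent to $e$ lie in the same ``in/out'' pair, which forces $\alpha_i = -\alpha_{i-1}$; if $e$ is a $0$-edge of $\Delta_i$, then $F_{i-1}, F_i$ lie in different pairs, and $\alpha_i = \alpha_{i-1}$. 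Since the angles at $e$ sum to $2\pi$ and each is $0$ or $\pi$, exactly two of the angles around $e$ equal $\pi$, so the total sign propagated once around the cycle is $(-1)^2 = +1$. This is exactly the cocycle condition.

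It follows that $c$ represents a class $\omega \in H^1(M;\mathbb{Z}/2\mathbb{Z})$, and the hypothesis of the lemma is equivalent to $\omega \neq 0$. Let $p\colon N \to M$ be the connected double cover corresponding to the nonzero homomorphism $\phi\colon \pi_1(M) \to \mathbb{Z}/2\mathbb{Z}$ dual to $\omega$. The pulled-back class $p^*\omega \in H^1(N;\mathbb{Z}/2\mathbb{Z})$ vanishes, so after lifting the sheet choices $\sigma_i$ to $N$ and modifying by a suitable $\mathbb{Z}/2\mathbb{Z}$-cochain on the tetrahedra of the lifted triangulation, the lifted $c$ becomes identically zero. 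These adjusted choices assemble into a globally coherent transverse orientation on $N$ compatible with the lift of $\tau$; that is, the lift of $\tau$ to $N$ is transverse--taut.
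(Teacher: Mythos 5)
Your proof is correct, and it produces the same double cover as the paper, but by a genuinely more combinatorial route. The paper's argument is a two-line bundle-theoretic one: the (unoriented) transverse direction of the taut structure defines a line bundle $B \subset TM$; if $B$ is non-orientable, lift to the universal cover, where the pullback of $B$ is orientable, and take the index-two subgroup of deck transformations preserving that orientation. You instead realize the obstruction explicitly as a $\mathbb{Z}/2\mathbb{Z}$-valued cochain $c$ on the dual $1$-skeleton and check the cocycle condition by hand; your class is of course $w_1(B)$ in disguise, so the resulting covers agree. What your version buys is that it makes visible exactly where the taut angle structure is used: the consistency of the local in/out choices around each interior edge comes from the fact that precisely two of the angles at the edge are $\pi$, giving an even number of sign flips -- a point the paper's proof hides inside the assertion that $B$ is a well-defined subbundle of $TM$. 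What the paper's version buys is brevity and no case analysis. One presentational remark: your ``propagation rule'' is phrased as if $\alpha_i$ were a single globally induced orientation of $F_i$, whereas what you actually use are the orientations induced by each reference choice $\sigma_i$ on its own two faces at $e$; the clean statement is that $\sum_{F \ni e} c(F)$ is congruent mod $2$ to the number of corners of angle $\pi$ at $e$, which equals $2$. This is a one-sentence rewording, not a gap, and the rest of your argument (coboundaries correspond exactly to changing the choices $\sigma_i$, the class vanishes after pulling back to the double cover determined by it, and killing the lifted cochain yields a coherent transverse orientation) is complete.
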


\begin{proof}
The (unoriented) transverse direction defines a line bundle $B\rightarrow M$, which is a natural subbundle of $TM$. If $B$ is orientable (\emph{i.e.}\ admits a nonzero section), then $\tau$ is tansverse-taut. If not, then $B$ lifts to a line bundle $\widetilde{B}$ over the universal cover $\widetilde{M}$ of $M$, and $\widetilde{B}$ is orientable because $\widetilde{M}$ is simply connected. The group of deck transformations of $\widetilde{M}$ which preserve the orientation of $\widetilde{B}$ has index $2$, and the corresponding double cover $N$ of $M$ satisfies the conditions.
\end{proof}

\begin{proof}[Proof of Theorem \ref{thm:bound}]
Let $\tau$ be a veering, taut triangulation of $M$. If the taut angle structure on $\tau$ is actually transverse--taut, then the proof is complete by Proposition \ref{prop:transverse-bound}. Otherwise, Lemma \ref{lemma:transverse-cover} guarantees a double cover $N \to M$, such that the lift $\tilde{\tau}$ of $\tau$ is transverse--taut. Note that tetrahedra, angles, and edge degrees all lift to finite covers. Thus $d_{\max}(M) =  d_{\max}(N)$ and $e_{\max}(M) =  e_{\max}(N)$.

By Proposition \ref{prop:transverse-bound}, the transverse--taut triangulation $\tilde{\tau}$ of $N$ admits a positive angle structure $(\theta)$, with all angles satisfying inequality \eqref{eq:angle-bound}. Let $\sigma$ be the involution of $N$ that acts as a deck transformation of the (regular) double cover $N \to M$. Then, because the polytope of positive angle structures is convex,
$$(\theta') = \frac{(\theta) + \sigma(\theta)}{2}$$ is also an angle structure on $N$, also satisfies \eqref{eq:angle-bound}, and is $\sigma$--equivariant. Projecting the angles of $(\theta')$ down to $M$ completes the proof.
\end{proof}

\section{Holonomies} \label{sec:holonomies}

\subsection{Definitions}
In this section, we explore the possible holonomies of the positive angle structures on a veering triangulation. Informally, the holonomy of a curve $\gamma$ in $\partial \overline{M}$ is its total turning angle according to the angles $\theta_j$ of the angle structure. We will show that the holonomy of $\gamma$ only depends on the homology class $[\gamma]$, and is a linear functional on homology classes. In other words, the holonomy naturally lives in the first cohomology of $\partial \overline{M}$.

\begin{define}\label{def:holonomy}
Let $V$ be a torus component of $\partial \overline{M}$, endowed with its possibly non-simplicial triangulation. Let $\gamma \subset V$ be an oriented, normal closed curve, as in Definition \ref{def:lead-trail}.  Then a component of $\gamma$ in a triangle of $V$ cuts off exactly one corner the triangle, which is either to the left or right of $\gamma$. 

\begin{figure}
\begin{center}
\psfrag{a1}{$a_1$}
\psfrag{a2}{$a_2$}
\psfrag{b1}{$b_1$}
\psfrag{b2}{$b_2$}
\psfrag{do}{$\dots$}
\psfrag{ga}{$\gamma$}
\includegraphics[width=8cm]{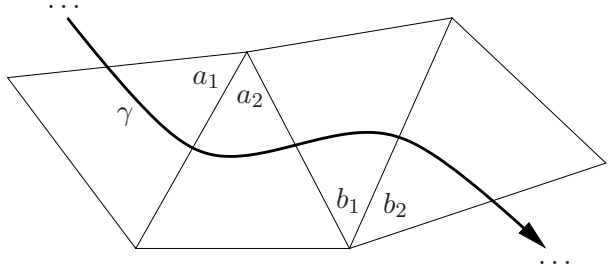}
\caption{Here $\mathcal{A}_\gamma=a_1+a_2+(\dots)$ and $\mathcal{B}_\gamma=b_1+b_2+(\dots)$.}
\label{fig:defholonomy}
\end{center}
\end{figure}

Let $(\theta)$ be a generalized angle structure on the triangulation $\tau$. Then every corner of each triangle on $V$ receives a (real-valued) ``angle'' assignment.
Let $\mathcal{A}_\gamma$ be the sum of angles to the left of $\gamma$, and $\mathcal{B}_\gamma$ the sum of angles to the right of $\gamma$. Note that each $\theta_j$ may appear several times in $\mathcal{A}_\gamma$ and $\mathcal{B}_\gamma$. Then we define the \emph{turning angle of $\gamma$} to be
\begin{equation}\label{eq:turning-angle}
t_\theta(\gamma) \: = \: \mathcal{A}_\gamma-\mathcal{B}_\gamma~. 
\end{equation}
See Figure \ref{fig:defholonomy}. Recall that every primitive homology class in $H_1(V, \ZZ)$ is represented by a simple closed curve, which can be taken to be normal after a small isotopy. If $[\gamma]$ is a nonzero, primitive homology class and $n \in \ZZ$, define the \emph{angular holonomy} of $n[\gamma] \in H_1(V, \ZZ)$ to be
\begin{equation}\label{eq:holonomy}
h_\theta(n[\gamma]) \: = \: n(\mathcal{A}_\gamma-\mathcal{B}_\gamma),
\end{equation}
for an arbitrary normal curve $\gamma$ representing $[\gamma]$.
\end{define}

Equation \eqref{eq:holonomy} begs the question of whether holonomy is well-defined. In fact, we have

\begin{proposition}\label{prop:holonomy-rep}
Let $V$ be a torus component of $\partial \overline{M}$, and $(\theta)$ a generalized angle structure on $\tau$. Let $\gamma \subset V$ be an oriented, normal, homologically non-trivial closed curve. Then the turning angle of $\gamma$ only depends on the homology class of $\gamma$, hence the holonomy $h_\theta([\gamma])$ is well-defined. Furthermore, for $a,b \in \ZZ$ and homology classes $\omega, \eta \in H_1(V, \ZZ)$, 
$$h_\theta(a \omega + b \eta) = a \cdot h_\theta(\omega) + b \cdot h_\theta(\eta).$$
In other words, $h_\theta: H_1(\bdy \overline{M}) \to \RR$ is a linear functional, hence $h_\theta \in H^1 (\bdy \overline{M}, \RR)$.
\end{proposition}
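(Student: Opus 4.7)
The plan is to base the proof on a combinatorial Gauss--Bonnet identity for the turning angle. The identity I would first establish is: for any oriented subsurface $R \subset V$ whose boundary decomposes as a disjoint union of oriented normal curves $\gamma_1, \ldots, \gamma_k$, each oriented so that $R$ lies on its left,
\[
\sum_{i=1}^{k} t_\theta(\gamma_i) \;=\; 2\pi\,\chi(R).
\]
The proof is by double-counting the total corner-angle of the triangulation that lies in $R$. Summing over triangles, each triangle entirely inside $R$ contributes $\pi$ (by the tetrahedron equation), and each boundary triangle contributes either its cut-off angle (when the cut-off corner lies in $R$) or $\pi$ minus that cut-off angle (when the quadrilateral lies in $R$). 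Summing over vertices, each vertex interior to $R$ contributes $2\pi$ (by the edge equation) and each exterior vertex contributes $0$, since no corner at a vertex outside $R$ can lie in $R$. Equating the two resulting expressions and comparing with the Euler-characteristic count of the CW structure that the triangulation and $\bdy R$ induce on $R$ produces the identity.

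Well-definedness follows from Gauss--Bonnet by a cobordism argument. Normal isotopy preserves $t_\theta$, because every bigon move introduces a cancelling pair of cut-off angles on opposite sides of the curve; hence two simple normal curves $\gamma, \gamma'$ on $V$ representing the same primitive class in $H_1(V, \ZZ)$ may be isotoped to be disjoint without changing their turning angles. As essential disjoint parallel curves on the torus, they cobound an annulus $R \subset V$, and Gauss--Bonnet gives $t_\theta(\gamma) - t_\theta(\gamma') = 2\pi\,\chi(R) = 0$. Since reversing the orientation of $\gamma$ interchanges $\mathcal{A}_\gamma$ and $\mathcal{B}_\gamma$, one has $t_\theta(-\gamma) = -t_\theta(\gamma)$, so the prescription $h_\theta(n[\gamma]) := n\,t_\theta(\gamma)$ unambiguously defines a function $h_\theta\colon H_1(V, \ZZ) \to \RR$.

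For linearity it suffices to verify additivity on a basis $[\alpha], [\beta]$ of $H_1(V, \ZZ)$ with simple normal representatives $\alpha, \beta$. Given a primitive class $\omega = p[\alpha] + q[\beta]$ with simple normal representative $\gamma$, the 1-cycle $\gamma - p\alpha - q\beta$ is null-homologous and so bounds a 2-chain $R$ in $V$; a version of Gauss--Bonnet suitably generalized to account for corners of $R$ at the intersections of $\alpha, \beta, \gamma$ (using that a flat cone metric with trivial cone angles extends smoothly) gives $t_\theta(\gamma) - p\,t_\theta(\alpha) - q\,t_\theta(\beta) = 2\pi\,\chi(R)$. Since $V$ is a torus with $\chi(V) = 0$, the bounding chain $R$ may be arranged so that this Euler contribution vanishes, yielding additivity. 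Combined with the homogeneity of the previous paragraph, this shows $h_\theta$ is $\ZZ$-linear and so descends to a class in $H^1(\partial \overline{M}, \RR)$; working component-by-component extends the conclusion from a single cusp $V$ to all of $\partial\overline{M}$. The main obstacle is precisely this last step: a careful local computation at each intersection of the three representing curves is required to show that the corner contributions and Euler contributions cancel in aggregate, and here the specific topology of the torus (namely $\chi(V) = 0$) is essential, in contrast to the disk-bounding case where the analogous Gauss--Bonnet calculation would leave a residual $2\pi$.
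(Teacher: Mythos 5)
Your combinatorial Gauss--Bonnet identity for an \emph{embedded} subsurface $R\subset V$ whose boundary is a disjoint union of normal curves is correct (both defining equations of a generalized angle structure are needed: triangle sums $\pi$, vertex sums $2\pi$, so $V$ is ``formally flat''), and the annulus cobordism does give $t_\theta(\gamma)=t_\theta(\gamma')$ for disjoint parallel representatives. One repair is needed even here: the elementary move relating two isotopic normal curves is not a bigon move (a bigon with an edge makes the curve non-normal, so $t_\theta$ is not even defined mid-move) but the push of an arc across a vertex, and its invariance is not a two-term cancellation --- it uses the full relations $\sum\alpha_i=2\pi$ around the vertex together with the angle sums of the two extreme triangles. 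That step is fixable by a short computation.

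The genuine gap is the linearity step, which is exactly the part you defer. For classes of different slopes the representatives $\alpha,\beta,\gamma$ must intersect, so $\gamma-p\alpha-q\beta$ bounds only a singular $2$-chain, not an embedded subsurface, and the ``Gauss--Bonnet with corners'' you invoke is precisely the missing content: the corner contributions at the intersection points are not determined by $(\theta)$ at all (a generalized angle structure assigns no angle to a crossing of two curves in the interior of a triangle), and your appeal to ``a flat cone metric with trivial cone angles'' is unavailable, since the $\theta_j$ may be zero, negative, or at least $\pi$, so no metric realizes them --- the structure is purely formal. Worse, the $\pm2\pi$ phenomenon for vertex-linking curves (noted right after the Proposition) means that disk pieces of your chain each contribute $\pm 2\pi$, and the assertion that ``$R$ may be arranged so that this Euler contribution vanishes'' assumes exactly what has to be proved; tracking these $2\pi$'s is the heart of the statement. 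The paper circumvents this by working in the space $T'V$ of tangent directions: the two angle equations give a consistent direction map $\psi_\theta$ on oriented edges of $\widetilde V$, this extends continuously to $T'\widetilde V$, yielding a representation $\pi_1(T'V)\to\RR$, and the choice of the direction field tangent to pushoffs of $\gamma$ is the device that simultaneously kills the $2\pi$ ambiguity and delivers well-definedness and linearity in one stroke. Without a proof of your corner Gauss--Bonnet (or some equivalent mechanism for the Euler-class bookkeeping), the additivity $h_\theta(a\omega+b\eta)=a\,h_\theta(\omega)+b\,h_\theta(\eta)$ is not established.
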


To appreciate the non-triviality of this claim, observe that the trivial homology class is represented by a curve $\gamma$ encircling a single vertex of $V$  (see Figure \ref{fig:perturbation}). All the angles cut off by $\gamma$ lie to one side of the curve, and by Definition \ref{def:anglestructure} these angles sum to $2\pi$.  Thus the turning angle of this trivial curve is $\pm 2\pi$, depending on the orientation of $\gamma$, rather than $0$. 

We note that angular holonomy is also a restriction of a well-known definition of complex--valued holonomy, which keeps track of complex--valued shape parameters instead of just
real--valued angles. (See Thurston's notes \cite[Chapter 4]{thurston:notes}. See also  \cite[Section 2]{fg:angled-survey}, where holonomy is defined only for normal curves, without any claim of linearity.) The turning angle of 
equation \eqref{eq:turning-angle} is precisely the imaginary part of the (log) holonomy from \cite{thurston:notes}. The issue of finding the right multiple of $2\pi$ is identical to the issue of finding the correct branch of a complex--valued logarithm.

\begin{proof}[Proof of Proposition \ref{prop:holonomy-rep}]
Let $\widetilde{V} \cong \RR^2$ be the universal cover of the torus $V$. The orientation of $V$, the triangulation of $V$, and the generalized angle structure $(\theta)$ all lift to $\widetilde{V}$. We begin by assigning a coherent direction to every oriented edge of $\widetilde{V}$.

\begin{claim}\label{claim:holonomy-1}
Let $X$ be the collection of all oriented edges of $\widetilde{V}$. Then there exists a map $$\psi_\theta:X\rightarrow \mathbb{R}/2\pi\mathbb{Z},$$
assigning each $x \in X$  a direction $\psi_\theta(x)$, 
with the following property. Whenever $x,y\in X$ have the same tail vertex $p$, with $y$ immediately following $x$ for the counterclockwise cyclic order at $p$, and $x'$ denotes the same edge as $x$ with opposite orientation, then 
\begin{equation}\label{eq:psi}
\psi_\theta(x')=\psi_\theta(x)+\pi \quad \text{ and } \quad \psi_\theta(y)=\psi_\theta(x)+\theta_j \, ,
\end{equation}
where $\theta_j$ is the angle between (the projections to $V$ of) $x$ and $y$.
\end{claim}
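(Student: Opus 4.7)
The plan is to construct $\psi_\theta$ by propagation from a basepoint. Fix $x_0 \in X$ and declare $\psi_\theta(x_0) = 0$. For every other $x \in X$, choose a finite sequence of oriented edges from $x_0$ to $x$ in which each consecutive pair is related by an elementary move --- either a reversal (increment $\pi$) or a CCW-rotation to the immediate successor at a common tail (increment equal to the triangle-angle $\theta_j$ between the two edges). Such a sequence exists because $\widetilde V$ is path-connected; define $\psi_\theta(x)$ as the sum of increments. By construction the two relations of \eqref{eq:psi} are satisfied, so the whole content of the claim is that $\psi_\theta(x)$ does not depend on the chosen sequence.

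Two sequences from $x_0$ to $x$ differ by a closed loop of moves, so I need to check that every closed loop has total increment in $2\pi\ZZ$. Since $\widetilde V$ is simply connected, it suffices to verify this on a generating family of elementary loops, of three types. First, \emph{vertex loops} at a point $p \in \widetilde V$, obtained by rotating CCW once around $p$: the total increment is the sum of all triangle-angles incident to $p$, which equals $2\pi$ by condition \eqref{eq:edge} applied to the ideal edge of $M$ lying under $p$. Second, \emph{reversal loops}, which trivially contribute $\pi + \pi = 2\pi$. Third, \emph{face loops} around a triangle $T \subset \widetilde V$ with angles $\alpha,\beta,\gamma$, executed as three reversals alternating with three CCW rotations at the successive vertices of $T$, each rotation going ``the long way around'' through the other triangles at that vertex: the three rotations contribute $(2\pi-\alpha) + (2\pi-\beta) + (2\pi-\gamma)$, so together with the three reversals the total is $3\pi + 6\pi - (\alpha+\beta+\gamma) = 9\pi - \pi = 8\pi$, using the per-tetrahedron relation $\alpha+\beta+\gamma = \pi$ from condition \eqref{eq:tetrahedron}.

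The main obstacle is the bookkeeping step: verifying rigorously that the three families above generate the cycle space of the moves graph on $X$. This is a combinatorial gauge-theoretic fact reflecting the 2-cell structure of $\widetilde V$ --- simple connectivity says that any closed sequence of moves can be ``filled in'' by a 2-disk whose cell structure decomposes the loop into elementary pieces. Concretely, one can induct on the number of rotation moves, replacing any excursion across the interior of a triangle by a walk along its boundary (at the cost of one face loop) and telescoping consecutive rotations at a common vertex (at the cost of one vertex loop), until the loop consists only of cancelling reversal pairs. The two conditions of Definition \ref{def:anglestructure} are used in exactly complementary ways here --- \eqref{eq:tetrahedron} for face loops and \eqref{eq:edge} for vertex loops --- which is a reassuring sign that the claim is really a statement about generalized angle structures.
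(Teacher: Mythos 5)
Your proposal is correct and is essentially the paper's own argument: define $\psi_\theta$ by propagation from a basepoint and check consistency of the increments on elementary loops, namely loops around a vertex (where equation \eqref{eq:edge} gives total $2\pi$) and loops around a triangle (where equation \eqref{eq:tetrahedron} gives a multiple of $2\pi$), invoking simple connectivity of $\widetilde{V}$. The paper handles your ``bookkeeping step'' cleanly by realizing the moves graph as a planar trivalent graph $\Gamma$ embedded in $\widetilde{V}\cong\RR^2$ whose complementary regions are exactly the vertex polygons and hexagons dual to triangles, so every loop visibly bounds a combination of these faces; your long-way-around face loops (total $8\pi$) are just its hexagons ($\pi+\theta_i+\pi+\theta_j+\pi+\theta_k=4\pi$) composed with vertex loops.
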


Consider the graph $\Gamma$ with vertex set $X$ and edges of the form $xx'$ and $xy$ (with $x,x',y$ as in the claim). 
To define the function $\psi_\theta$, it suffices to set $\psi_\theta(x_0) = 0$ for one arbitrary $x_0 \in X$. Then, equation \eqref{eq:psi} gives a way to extend the definition to every other $x \in X$. Because $\Gamma$ is connected, each $x\in X$ will receive (at least one) direction vector. The main content of Claim \ref{claim:holonomy-1} is that the definition is consistent, i.e.\ that no contradictions arise in extending the definition over $\Gamma$.

Up to isomorphism, $\Gamma$ is the infinite trivalent planar graph obtained from the $1$-skeleton of $\widetilde{V}$ by replacing every $v$-valent vertex with a $v$-gon. The complement of $\Gamma$ in the plane has two types of components: 
\begin{enumerate}
\item $v$-gons coming from vertices $p$ of $\widetilde{V}$. The definition is consistent along the boundary of such a $v$-gon because the $v$ angles around $p$ add up to $2\pi\equiv 0~[2\pi]$ for the angle structure $(\theta)$.
\item Hexagons coming from triangles of $\widetilde{V}$ with angles $\theta_i, \theta_j, \theta_k$. The definition is consistent along the boundary of such a hexagon because $\pi+\theta_i+\pi+\theta_j+\pi+\theta_k=4\pi\equiv 0~[2\pi]$ for the angle structure $(\theta)$.
\end{enumerate}
Since $\mathbb{R}^2$ is simply connected, every closed loop in $\Gamma$ bounds some combination of $v$-gons and hexagons. Thus $\psi_\theta$ is well-defined, proving Claim \ref{claim:holonomy-1}. Moreover, given the generalized angle structure $\theta$, the map $\psi_\theta$ is clearly unique up to an additive constant.

\begin{claim}\label{claim:holonomy-2}
Let $T'\widetilde{V}$ denote the space of 
oriented
tangent directions at points of $\widetilde{V}$,
i.e.\ tangent vectors of $T\widetilde{V}$ up to multiplication by positive scalars.
There exists a continuous map $$\Psi_\theta:T'\widetilde{V}\rightarrow \mathbb{R}/2\pi\mathbb{Z}$$ 
that extends $\psi_\theta$, in the sense that for any open edge $e$ of $\widetilde{V}$, if a nonzero tangent vector $[u]\in T'e$ defines on $e$ the orientation of $x\in X$, then $\Psi_\theta([u])=\psi_\theta(x)$.
\end{claim}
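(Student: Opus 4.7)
The plan is to specify $\Psi_\theta$ inside each triangle of $\widetilde V$ using a local Euclidean model that respects the prescribed angles, and then to verify continuity where triangles meet.

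First I would realize each triangle $\Delta$ of $\widetilde V$ as a Euclidean triangle whose three interior angles are the three values $\theta_i, \theta_j, \theta_k$ assigned by $(\theta)$ to $\Delta$, respecting the orientation of $\widetilde V$. This provides $\Delta$ with a flat structure and canonically identifies all tangent circles $T'_q \Delta$ for $q \in \overline\Delta$ via Euclidean parallel transport. Within each such $\Delta$ I would define $\Psi_\theta$ by fixing a vertex $a$ and an oriented edge $x \in X$ of $\Delta$ emanating from $a$: for any $[u] \in T'_q \Delta$, parallel-transport $[u]$ to $a$, measure its counterclockwise angle $\alpha \in [0, 2\pi)$ from the tangent direction of $x$, and set $\Psi_\theta([u]) = \psi_\theta(x) + \alpha \pmod {2\pi}$. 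A direct calculation, using the second clause of \eqref{eq:psi} at the other two vertices $b, c$ together with the triangle angle sum $\theta_a + \theta_b + \theta_c = \pi$, shows that $\Psi_\theta$ agrees with $\psi_\theta$ on all six oriented edges of $\Delta$; in particular, the definition does not depend on the choice of $a$ and $x$.

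Next I would verify continuity of $\Psi_\theta$ on all of $T'\widetilde V$. Inside the open interior of any triangle, continuity is automatic from the parallel transport construction. On the open interior of a shared edge $e$ between two triangles $\Delta, \Delta'$, the two along-$e$ directions receive the same values $\psi_\theta(e)$ and $\psi_\theta(e) + \pi$ from both triangles' definitions, while the remaining tangent directions point into exactly one of the two triangles and are handled by that triangle's Euclidean model. At a vertex $p$, the tangent circle $T'_p$ decomposes into sectors corresponding to incident triangles, each of width the angle of that triangle at $p$; the second clause of \eqref{eq:psi} says that adjacent sectors assign the same $\Psi_\theta$ value at the tangent direction of their common edge, and the total sector width is $2\pi$ by equation \eqref{eq:edge}.

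The main obstacle is the compatibility verification between adjacent sectors at vertices and across shared edges, but this reduces cleanly to the two clauses of \eqref{eq:psi} already established in Claim \ref{claim:holonomy-1}. No global flat or Euclidean structure on $\widetilde V$ is needed, only the local Euclidean model on each individual triangle: any potential mismatch in edge lengths between adjacent triangles plays no role, since $\Psi_\theta$ depends only on angular data within each triangle.
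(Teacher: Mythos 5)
There is a genuine gap: your construction assumes, in its very first step, that each triangle of $\widetilde V$ can be realized as a Euclidean triangle whose interior angles are the prescribed values $\theta_i,\theta_j,\theta_k$. That is only possible when all three angles lie in $(0,\pi)$, i.e.\ for a \emph{positive} angle structure. But Claim \ref{claim:holonomy-2} sits inside the proof of Proposition \ref{prop:holonomy-rep}, which is stated for an arbitrary \emph{generalized} angle structure: the $\theta_j$ may be $0$, equal to $\pi$, negative, or larger than $\pi$. These degenerate cases are not hypothetical in this paper — the proposition is applied to the taut structure itself (angles $(0,0,\pi)$, Lemma \ref{lemma:taut-trivial}) and to nonnegative structures on the boundary of the angle polytope (Propositions \ref{prop:holo-1}, \ref{prop:holo-2}, \ref{prop:ladder-holonomy-1}, \ref{prop:ladder-holonomy-2}), where your Euclidean triangle degenerates; and for a general point of $GAS(\tau)$ no Euclidean model exists at all. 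Relatedly, your continuity argument at a vertex decomposes the tangent circle into ``sectors of width the angle of that triangle at $p$'' summing to $2\pi$ via \eqref{eq:edge} — a decomposition that has no meaning once some angles are zero or negative, and your formula $\Psi_\theta([u])=\psi_\theta(x)+\alpha$ with $\alpha$ the Euclidean angle forces the value of $\Psi_\theta$ to vary \emph{monotonically} by exactly the geometric angle across each corner, which cannot reproduce a zero or negative net variation.

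The paper's proof avoids this by decoupling the parameterization of directions from the angle data: each triangle is identified with a fixed equilateral model, $\Psi_\theta$ is declared equal to $\psi_\theta$ on the six edge-parallel directions, and in between one interpolates by an arbitrary continuous function $f_\theta$ whose \emph{net variation} (not monotone increase) across each of the six sectors equals the corresponding angle $\theta_i,\theta_j,\theta_k$ or their repeats. Consistency then uses only $\theta_i+\theta_j+\theta_k=\pi$ from \eqref{eq:tetrahedron} and the $2\pi$ vertex sums from \eqref{eq:edge}, identities valid for any real-valued generalized angle structure; the resulting ambiguity in $f_\theta$ is harmless because the space of such interpolations is contractible and the eventual holonomy values are integer combinations of the $\theta_i$ and $2\pi$. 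When all angles are positive, your parallel-transport recipe is exactly one admissible choice of $f_\theta$, so your argument is fine in that special case — but to prove the claim as stated you should replace the Euclidean realization by this interpolation argument (or separately justify why the generalized case reduces to the positive one, which it does not in any obvious way).
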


We first define $\Psi_\theta$ on one triangle $\tau$
of $\widetilde{V}$: up to a diffeomorphism, identify $\tau$ with 
some fixed equilateral triangle of the plane. For a vector $[u]\in T'\tau$ (pointing into $\tau$ if its root is on $\partial \tau$), let $\alpha([u])\in [0,2\pi)$ be its argument: we may assume that $[u]$ is parallel to one of the three sides of $\tau$ if and only if $\alpha([u])\in \frac{\pi}{3}\mathbb{Z}$. For such $[u]$, declare $\Psi_\theta([u])=\psi_\theta(x)$ where $x$ is the (oriented) edge parallel to $[u]$. For other $[u]$, use interpolation, setting $\Psi_\theta([u])=f_\theta\circ\alpha([u])$ for some continuous function $f_\theta$ such that the net variation of $f_\theta$ between two consecutive multiples of $\frac{\pi}{3}$ is $\theta_i$ or $\theta_j$ or $\theta_k$ (the $(\theta)$-angles of $\tau$). Note that the space of such interpolating maps $f_\theta$ is contractible. This definition is continuous over $T'\tau$ (\emph{i.e.}\ over all arguments) because $\theta_i+\theta_j+\theta_k + \theta_i+\theta_j+\theta_k=2\pi \equiv 0 ~[2\pi]$. 

Similar definitions for other triangles $\tau$ of $\widetilde{V}$ fit together consistently. Moreover, as the direction $[u_t]$ rotates by a full turn (counterclockwise) around a chosen basepoint $p$ of $\widetilde{V}$, the net variation of $\Psi_\theta([u_t])$ is precisely $2\pi$. This follows from the definition for $p$ in the interior of a triangle $\tau$; from $\theta_i+\theta_j+\theta_k=\pi$ when $p$ is in the interior of an edge; and from the fact that the $(\theta)$-angles add up to $2\pi$ around any vertex of $V$ when $p$ is a vertex of $\widetilde{V}$. Claim \ref{claim:holonomy-2} is proved.

\smallskip

The spaces $T'V$ 
(oriented
tangent directions to $V$) and $T'\widetilde{V}$ have the same universal cover $\widetilde{T'V}$, and $\mathbb{R}/2\pi\mathbb{Z}$ has universal cover $\mathbb{R}$. Therefore $\Psi_\theta$ lifts to $\widetilde{\Psi}_\theta:\widetilde{T'V}\rightarrow \mathbb{R}$, and by computing net variation of $\widetilde{\Psi}_\theta$ along a path, this induces a map 
$$\overline{h}_\theta:\pi_1(T'V)\rightarrow \mathbb{R} \,,$$
which is clearly a representation. Its values are linear combinations of the $\theta_i$ (and of $2\pi$) with integer coefficients; therefore it does not depend on the choices made so far (\emph{e.g.}\ of function $f_\theta$). 
Moreover, any non-vanishing vector field on $V$ (for instance, a constant vector field in a flat metric) defines a direction field, which provides every loop $\gamma$ in $V$ with a canonical lift $\overline{\gamma}$ to $T'V$. The value $\overline{h}_\theta(\overline{\gamma})$ does not depend on the choice of direction field, because the angular difference between two such fields is just a scalar function on $V$. Therefore $\overline{h}_\theta$ descends to a representation 
$$\hat{h}_\theta:\pi_1(V) = H_1(V, \ZZ) \rightarrow \mathbb{R} \,.$$

It remains to relate the representation $\hat{h}_\theta$ defined via $T'V$ to the angular holonomy $h_\theta$ defined via turning angles. The connection is as follows. Let $\gamma$ be a smooth, embedded, normal curve in $V$. Then the torus $V$ can be foliated by ``pushoff'' curves isotopic to $\gamma$. The tangent directions to these pushoff curves define a nonzero direction field on $V$. By Claims \ref{claim:holonomy-1} and \ref{claim:holonomy-2}, the tangent direction to $\gamma$ changes by exactly $\mathcal{A}_\gamma-\mathcal{B}_\gamma$ as we walk around $\gamma$. Therefore,
$$ \hat{h}_\theta([\gamma]) \: = \:  \overline{h}_\theta(\overline{\gamma}) \:  = \: \mathcal{A}_\gamma-\mathcal{B}_\gamma \: = \: t_\theta(\gamma).$$
But we have already shown that $\hat{h}_\theta$ is a representation, i.e.\ depends only on the (primitive) homology class $[\gamma]$. Therefore, the turning angle $t_\theta(\gamma)$ of the curve $\gamma$ depends only on the homology class, which implies the holonomy $h_\theta([\gamma])$ is well-defined. This definition of holonomy for primitive classes extends linearly to all of $H_1(V, \ZZ)$, via equation \eqref{eq:holonomy}.

Finally, the linearity of $h_\theta$ follows immediately because $h_\theta = \hat{h}_\theta$ is a representation and $\RR$ is commutative.
\end{proof}

\subsection{How to deform the holonomy}
 
\begin{lemma}\label{lemma:taut-trivial}
The angular holonomy of a veering taut angle structure is always $h_{\text{taut}} = 0$.
\end{lemma}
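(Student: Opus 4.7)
The plan is to apply the linearity of angular holonomy from Proposition~\ref{prop:holonomy-rep} and verify that $h_{\text{taut}}$ vanishes on two generators of $H_1(V,\ZZ)$ for each torus component $V \subset \bdy \overline{M}$. A natural basis is the ladderpole class $\alpha = [\gamma_i]$ (for any ladderpole $\gamma_i \subset V$) together with a transverse class $\beta$ satisfying $[\beta] \cdot \alpha = 1$.

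For the ladderpole class, I would represent $\alpha$ by a normal curve obtained by pushing $\gamma_i$ slightly into an adjacent ladder $\lad$. As this curve travels along $\gamma_i$, in each triangle it crosses it cuts off either a vertex $v \in \gamma_i$ (in the R-type triangles of $v$'s apex fan in $\lad$) or the apex $A$ on the opposite ladderpole (in the L-type triangle between consecutive apex fans). Using the analysis of Section~\ref{sec:observations}, the wide angle at $v$ in $\lad$ sits in the L-type triangle adjacent to the upward ladderpole edge at $v$ (where $v$ is a base vertex), while the wide angle at $A$ in $\lad$ sits in an R-type triangle where $A$ is a base vertex. In neither case does the pushed-off ladderpole cut off a wide angle, so every cut-off angle is $0$ and $t_{\text{taut}}(\gamma) = 0$.

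For the transverse class, I would construct $\beta$ to cross each ladder through exactly two triangles: an L-type entered across a ladderpole segment, followed by an R-type exited across a ladderpole segment on the opposite side. In a red-blue ladder (with the red ladderpole on the left), the cut-off vertex of the R-type triangle is a blue base vertex that carries the wide angle of that triangle, and it lies on the right of $\beta$, contributing $-\pi$; the L-type contributes $0$. In the adjacent blue-red ladder the roles invert: the wide cut-off is now at a blue base vertex of the L-type, lying on the left of $\beta$, contributing $+\pi$, while the R-type contributes $0$. Summing over the $r$ pairs of alternating ladders around $V$ yields $t_{\text{taut}}(\beta) = r(-\pi + \pi) = 0$. (Any discrepancy between the start and end point of $\beta$ on $\gamma_i$ is absorbed by a ladderpole correction, which contributes nothing since $h_{\text{taut}}(\alpha) = 0$.)

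The main bookkeeping hurdle lies in pinning down, for each triangle the path enters, which vertex carries the wide angle and on which side of the path it lies; this amounts to translating the clockwise/counterclockwise ``just after the ladderpole'' rules for red and blue vertices from Section~\ref{sec:observations} into concrete statements about base-vertex versus apex positions in L- and R-type triangles, and then applying the sign convention for $\mathcal{A}_\gamma - \mathcal{B}_\gamma$. Once this is set up, vanishing on the two generators combined with the linearity of angular holonomy gives $h_{\text{taut}} = 0$ on $H_1(V,\ZZ)$ for every boundary torus of $\overline{M}$.
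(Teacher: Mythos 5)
Your proposal is correct and rests on the same skeleton as the paper's proof: invoke the linearity of $h_\theta$ from Proposition~\ref{prop:holonomy-rep} and check vanishing on the ladderpole class and on one transverse class; your ladderpole computation (a pushoff cuts only apex corners, i.e.\ corners opposite ladderpole segments, which are thin) is exactly the paper's first step. Where you genuinely differ is the transverse class. The paper orients the cusp triangulation as a train track (rungs left to right, blue poles down, red poles up), takes a curve carried by the rungs, and notes that at each vertex the angles on either side of the curve sum to $\pi$, so the carried curve perturbs to a normal curve of zero turning; you instead cross each ladder through a single adjacent pair of triangles with ladderpole edges on opposite poles and compute the two cut corners by hand, getting a net $\mp\pi$ per ladder that cancels between consecutive ladders because the wide angle sits at the top endpoint of the ladderpole edge in one ladder and at the bottom endpoint in the next. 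This is more explicit than the train-track trick, at the cost of the bookkeeping you acknowledge. Two points to tighten: (i) you cannot in general make the chosen two-triangle crossings of consecutive ladders meet along the shared pole, so ladderpole-parallel correction segments are needed between \emph{every} pair of consecutive ladders, not just to close the curve at the end; since such segments cut only thin apex corners they contribute nothing, but this should be stated, and it means $\beta$ does not literally cross each ladder in only two triangles. (ii) Your case analysis describes only one of the two local configurations of the adjacent L--R pair (the R-type triangle can lie above or below the L-type one across the shared rung); in the other configuration the wide cut corner sits in the L-type triangle rather than the R-type one, but the net contribution of the ladder crossing is the same $\mp\pi$ either way, so the pairwise cancellation, and hence $t_{\text{taut}}(\beta)=0$ and the conclusion $h_{\text{taut}}=0$, survive intact.
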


\begin{proof}
This can be seen from Figure \ref{fig:cuspview}. Indeed, a curve $\gamma$ parallel to the ladderpoles cuts only thin angles off the triangles it crosses, so $\mathcal{A}_\gamma=\mathcal{B}_\gamma=0$ and $h_{\text{taut}}(\gamma)=0$. For more general curves, the main observation is that the triangulation of $\partial\overline{M}$ can be turned into an \emph{oriented} train track, \emph{e.g.}\ in Figure \ref{fig:cuspview} by orienting all rungs from left to right, all blue ladderpoles downward, and all red ladderpoles upward. This way, at each vertex, the two $\pi$-angles separate the incoming edges (branches) from the outgoing ones. Next, consider an oriented curve $\overline{\gamma}$ carried by the train track, and assume for simplicity that $\overline{\gamma}$ consists only of rungs (and visits all ladders in cyclic order from left to right, possibly several times). Since at every vertex of $\overline{\gamma}$ the angles on either side of $\overline{\gamma}$ sum to $\pi$, it is easy to see that $\overline{\gamma}$ can be perturbed to a curve $\gamma$ transverse to the train track, with trivial holonomy $h_{\text{taut}}(\gamma)=0$. Since $h_{\text{taut}}$ is linear on $H_1(\partial \overline{M},\mathbb{R})$, we conclude that $h_{\text{taut}}=0$.
\end{proof}

For non-veering (even taut) angle structures, one may in general have non-trivial holonomy. The key to creating nonzero holonomy is the following relationship between holonomy and deformations.

\begin{lemma}[Lemma 4.4 of \cite{fg:angled-survey}]\label{lemma:holonomy-intersection}
Let $(\theta)$ be a generalized angle structure, and let $\gamma$ and $\delta$ be oriented closed curves on $\bdy M$. Then, for all $t \in \RR$, the deformation $t D^\gamma$ has the following effect on the holonomy of $\delta$:
$$h_{\theta+tD^\gamma}(\delta)=h_\theta(\delta)+2t\cdot \iota(\gamma, \delta)$$
where $\iota$ denotes algebraic intersection number. (For our purposes, the sign convention in $\iota(\gamma, \delta)$ will be irrelevant.)
\end{lemma}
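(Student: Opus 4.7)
The plan is to compute the derivative $\dot h(\delta) := \frac{d}{dt}\big|_{t=0} h_{\theta+tD^\gamma}(\delta)$ directly and match it to $2\iota(\gamma,\delta)$; the general statement then follows by linearity in $t$. I first apply Proposition \ref{prop:holonomy-rep} to move $\delta$ within its homology class and homotope $\gamma$ through normal curves so that $(\gamma,\delta)$ is in transverse general position—all intersections occur at finitely many points in triangle interiors, with signs $\mathrm{sign}(p)=\pm 1$ summing to $\iota(\gamma,\delta)$.

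From Definition \ref{def:holonomy}, differentiating the turning-angle formula gives
$$\dot h(\delta) \;=\; \sum_{(T,c)} \sigma_\delta(T,c) \cdot \dot\theta(c;T),$$
summed over $\delta$-traversals cutting off a corner $c$ in a boundary triangle $T$, where $\sigma_\delta(T,c)=\pm 1$ records the side of $\delta$ on which $c$ lies. By Definition \ref{def:lead-trail}, the quantity $\dot\theta(c;T)$ is a signed count of $\gamma$-edge-crossings, so $\dot h(\delta)$ reorganizes into a sum over pairs consisting of a $\gamma$-edge-crossing together with a compatible $\delta$-traversal of one of the two adjacent triangles.

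The heart of the argument is the local analysis at each transverse intersection $p \in \gamma \cap \delta$. Such a point lies in the interior of some triangle $T$; there $\gamma$ and $\delta$ must together use all three edges of $T$ (otherwise they could not cross inside $T$). A short case analysis on the three possible configurations of the two arcs shows that $p$ contributes exactly $2\,\mathrm{sign}(p)$: one factor of $\pm 1$ coming from $\gamma$'s entry into $T$, another from $\gamma$'s exit, each multiplied by the appropriate $\sigma_\delta$. Pairs not arising from an intersection should give no net contribution, either because $\delta$ does not visit the relevant region, or because nearby $\delta$-traversals assemble into a telescoping cancellation that uses the closedness of $\delta$ (and of $\gamma$).

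The main obstacle is exactly this cancellation. A single change in a dihedral angle $\theta_j$ affects the four distinct corners of the cusp view where $\theta_j$ appears, potentially located on different components of $\bdy \overline{M}$; in particular, the deformation $D^\gamma$ along $\gamma$ in one boundary torus can alter angles in $\delta$-traversed triangles on a disjoint torus, and all such spurious contributions must cancel. A conceptually cleaner route avoiding this casework uses the direction field $\Psi_\theta$ from Claim \ref{claim:holonomy-2}: one shows that the first-order variation $\dot\Psi$ produced by $D^\gamma$ has a jump of exactly $\pm 2$ across $\gamma$ in the universal cover $\widetilde{V}$ (and is locally constant elsewhere), so that a Stokes-type integration of the rotation of $\delta$'s tangent vector through this jump yields $\dot h(\delta) = 2\,\iota(\gamma,\delta)$ directly, with the global cancellation now built into the uniqueness of $\Psi_\theta$ up to constant.
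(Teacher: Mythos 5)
You should first note that the paper does not actually prove this statement: it is quoted as Lemma 4.4 of \cite{fg:angled-survey}, where the bookkeeping you sketch is carried out in full. So the comparison here is really with that reference, and the question is whether your argument closes the same combinatorial loop. It does not: the global cancellation that you yourself identify as ``the main obstacle'' is exactly the content of the lemma, and in your write-up it is asserted (``should give no net contribution \dots telescoping cancellation'') rather than proved. Moreover, your reorganization of $\dot h(\delta)$ into pairs consisting of a $\gamma$--edge-crossing and a $\delta$--traversal of ``one of the two adjacent triangles'' already misstates the sum: a crossing of an edge $e$ changes two dihedral angles $\theta_j$, and each $\theta_j$ is carried by four cusp corners, most of which lie in triangles not adjacent to $e$ and possibly on other boundary tori (this is precisely the Remark following Definition \ref{def:lead-trail}). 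Those non-local terms are the ones that must cancel, and no mechanism for their cancellation is given.

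The proposed ``conceptually cleaner route'' does not repair this, because its key claim is false: the first-order variation $\dot\Psi$ of the direction field of Claim \ref{claim:holonomy-2} is \emph{not} locally constant away from $\gamma$ with a jump of $\pm 2t$ across $\gamma$. If $\dot\Psi$ were locally constant off $\gamma$, then every corner angle of every triangle disjoint from $\gamma$ would be unchanged, whereas $D^\gamma$ genuinely changes the angles of such triangles (even on other components of $\partial\overline{M}$). A concrete counterexample is $\gamma=\gamma_v$, a loop around a single vertex: this is the deformation $D^\epsilon$ of Remark \ref{remark:edge-def}, the building block of Lemma \ref{lemma:hinge-rescue}, and it satisfies $\iota(\gamma_v,\delta)=0$ for every $\delta$ while altering angles in triangles spread across the cusps, including around the other endpoint $v'$ of $\epsilon$; so the variation of the direction field is in no sense supported along $\gamma_v$, and a ``jump across $\gamma$'' Stokes argument has nothing to integrate against. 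In summary, your proposal correctly reduces the lemma to a cancellation statement and correctly predicts the local contribution $2\,\mathrm{sign}(p)$ at each point of $\gamma\cap\delta$, but neither of your two routes establishes the cancellation of the non-local contributions, so the proof is incomplete at its essential step.
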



One immediate consequence of Lemma \ref{lemma:holonomy-intersection} is that if $\gamma$ is homologically trivial, or $\gamma$ belongs to a different boundary torus than $\delta$, then $D^\gamma$ does not affect the holonomy of $\delta$ at all. Thus, because $h_{\text{taut}} = 0$ and all the curves $\gamma$ used in Sections \ref{sec:rescuing} and \ref{sec:lowerbound} are trivial, it follows that all the angle structures constructed so far have $h_{\theta} = 0$. To obtain non-trivial holonomy, one must deform along homologically non-trivial curves.

\subsection{Holonomy of the rung direction} \label{sec:holonomy-rungs}

For example, consider a cusp with $2k$ ladders ($k\geq 1$). Let $\delta$ be a closed curve that has intersection number $1$ with the slope of the ladders $\lad_i$, and let $\gamma_1, \dots, \gamma_k$ be consistently oriented curves which travel up \emph{every other} ladder (\emph{i.e.}\ non-adjacent ladders), so that whenever some $\gamma_s$ traverses a triangle, it enters through one of the $0-\pi$ edges and exits through the $0-0$ edge. Then, the deformation $$D:=\sum_{s=1}^k \lambda_s D^{\gamma_s}$$ does not exit the space $\overline{\ang(\tau)}$ of nonnegative angle structures on $\tau$, for small nonnegative $\lambda_s$ (it decreases the $\pi$'s and increases the $0$'s). In fact we can take $\lambda_s=\pi/4$ for all $s$, because each tetrahedron will suffer at most $4$ deformations (the $\gamma_s$ can cross each of its $4$ cusp triangles at most once). Moreover, if we take the $\gamma_s$ in the \emph{other} set of $k$ ladders and reverse their orientation, the same construction works. Moreover still, we can choose to do this on all cusps simultaneously. By Lemma \ref{lemma:holonomy-intersection}, the deformation $D$ perturbs the holonomy of $\delta$ by 
$$2 \cdot \iota \left ( \delta~,~ \textstyle{\sum_{s=1}^k \lambda_s\gamma_s } \right )  =\pm \frac{k\pi}{2}~,$$ 
the sign depending on the choice of ladder set in the cusp containing $\delta$.
We can summarize the construction in the following result:

\begin{proposition} \label{prop:holo-1}
Let $T_1, \dots, T_c$ be the cusps of $M$, carrying $2k_1, \dots, 2k_c$ ladders (annuli) respectively. Choose $\varepsilon_1, \dots, \varepsilon_c \in \{-1,1\}$. Let $\delta_1, \dots, \delta_c$ be homology classes in $H_1(\partial \overline{M}, \mathbb{Z})\simeq \mathbb{Z}^{2c}$ that are $\mathbb{Z}^2$-complements of the slopes of the annuli in $T_1, \dots, T_c$ respectively. Then, there exists a nonnegative angle structure $(\theta)$ such that
\begin{equation}\label{eq:rung-holonomy}
h_\theta(\delta_i)=\frac{\varepsilon_i k_i\pi}{2}
\end{equation}
for every $i\in \{1,\dots, c\}$.
\end{proposition}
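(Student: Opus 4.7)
The plan is to prove Proposition \ref{prop:holo-1} by an explicit deformation of the veering taut angle structure $(\theta_{\mathrm{taut}})$, which has vanishing holonomy on every cycle by Lemma \ref{lemma:taut-trivial}, and then applying Lemma \ref{lemma:holonomy-intersection} cusp by cusp. The construction is the one already sketched in the paragraph above the statement; my proof will package it cleanly.

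\textbf{Construction of the curves.} In each cusp torus $T_i$, the $2k_i$ ladders partition into two alternating families of $k_i$ pairwise non-adjacent ladders. Using $\varepsilon_i$, I select one family and a consistent orientation of its ladderpoles. In each selected ladder $\mathcal{L}_{i,s}$, I draw a simple closed normal curve $\gamma_{i,s}$ representing the ladderpole homology class (with the chosen orientation), whose local behavior is as follows: in every triangle traversed by $\gamma_{i,s}$, the curve enters through a rung (a $0$--$\pi$ edge) and exits through a ladderpole segment (a $0$--$0$ edge). Geometrically, $\gamma_{i,s}$ hugs the ladderpole so that the wide vertex of each traversed triangle lies on the side opposite to $\gamma_{i,s}$; the fact that selected ladders are non-adjacent guarantees the curves can be taken disjoint from each other. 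I then set
$$
(\theta) \,:=\, (\theta_{\mathrm{taut}}) + \tfrac{\pi}{4}\sum_{i,s} D^{\gamma_{i,s}}.
$$

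\textbf{Nonnegativity.} By Definition \ref{def:lead-trail}, each traversal of a triangle as above decreases the $\pi$-angle at the wide vertex opposite the entering rung by $\tfrac{\pi}{4}$ and increases the $0$-angle opposite the exit edge by $\tfrac{\pi}{4}$; no other angle is affected in that triangle. A tetrahedron $\Delta$ has four boundary triangles (one per truncated vertex), and the non-adjacency of selected ladders ensures that each such triangle is traversed at most once by the entire family $\{\gamma_{i,s}\}$. Since each dihedral edge of $\Delta$ meets exactly two boundary triangles of $\Delta$ (one at each endpoint), its angle is perturbed at most twice. With coefficient $\tfrac{\pi}{4}$, the $\pi$-edges retain an angle of at least $\pi - \tfrac{\pi}{2} = \tfrac{\pi}{2}$, and the $0$-edges reach at most $\tfrac{\pi}{2}$; hence $(\theta)$ is nonnegative. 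Equations \eqref{eq:tetrahedron} and \eqref{eq:edge} are preserved automatically because each $D^{\gamma_{i,s}}$ is tangent to $GAS(\tau)$.

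\textbf{Holonomy computation.} Fix a cusp index $j$. The curves $\gamma_{i,s}$ with $i\neq j$ lie on a different boundary component, so the remark following Lemma \ref{lemma:holonomy-intersection} implies they do not affect $h_\theta(\delta_j)$. Within $T_j$ each $\gamma_{j,s}$ is homologous to the (oriented) ladderpole slope, and since $\delta_j$ is a $\mathbb{Z}^2$-complement of that slope, $\iota(\gamma_{j,s}, \delta_j) = \varepsilon_j$ (the common sign being fixed by our choice of ladder family and orientation). Applying Lemma \ref{lemma:holonomy-intersection} once per curve and summing,
$$
h_\theta(\delta_j) \,=\, h_{\mathrm{taut}}(\delta_j) + 2\cdot \tfrac{\pi}{4}\cdot \varepsilon_j \cdot k_j \,=\, \tfrac{\varepsilon_j k_j \pi}{2}.
$$

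The main obstacle I expect is the geometric part of Step 1: verifying that in each selected ladder one can indeed realize a normal closed curve with the prescribed local pattern (entering through a rung, exiting through a $0$--$0$ edge) in every triangle it crosses, and that the family can be taken disjoint so that the ``at most one traversal per boundary triangle'' count holds across all cusps simultaneously. Once this is pinned down from the ladder structure of Section \ref{sec:observations}, the nonnegativity and holonomy computations are routine bookkeeping.
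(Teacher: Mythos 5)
Your overall strategy is the paper's own: start from the veering taut structure (holonomy zero by Lemma \ref{lemma:taut-trivial}), add $\frac{\pi}{4}\sum D^{\gamma}$ for curves running along every other ladder in each cusp, and compute the holonomy change with Lemma \ref{lemma:holonomy-intersection}. However, your description of the curves is wrong in a way that would break the argument if taken literally. In the cusp triangulation the ladderpole segments are the $0$--$\pi$ edges (each one is adjacent to the wide corner; by Lemma \ref{lemma:hinge-rescue}\eqref{0-ladderpole} the $0$--angles are \emph{opposite} ladderpole segments), while the $0$--$0$ edge of every triangle is a rung. A curve with your stated pattern ``enter through a rung, exit through a ladderpole segment'' does not exist: crossing a ladderpole segment puts you in the adjacent, unselected ladder, whose triangle is then entered through a ladderpole segment, contradicting the pattern; moreover the angle decreased by such a crossing would be a thin angle, instantly going negative. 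The correct curve (the paper's) stays inside its ladder and crosses only rungs, entering each triangle through its $0$--$\pi$ rung and exiting through its $0$--$0$ rung; note also that the wide vertices of the triangles of one ladder lie on \emph{both} of its poles, so ``hugging a pole so the wide vertex is always on the far side'' cannot be arranged. This entry/exit requirement forces the direction of travel in each ladder (according to whether wide angles sit above or below the base), which is exactly why only every other ladder can be used with consistently oriented curves: the sign $\varepsilon_i$ is realized by choosing \emph{which} of the two alternating families to traverse (equivalently, the other family with reversed orientation), not by freely orienting a chosen family. Finally, by Definition \ref{def:lead-trail} the increased angle is the one opposite the edge of \emph{entry} and the decreased angle is opposite the edge of \emph{exit}; you state the opposite convention, although for the corrected curve the net effect is the intended one ($\pi$'s decrease, $0$'s increase).

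Your nonnegativity count also contains an error. Angles are assigned to \emph{pairs of opposite edges} (Definition \ref{def:anglestructure}), so the $\pi$--variable of a tetrahedron appears at the wide corner of all four of its cusp triangles, not two; since each cusp triangle is crossed at most once by the whole family, that variable may be decreased up to four times by $\frac{\pi}{4}$ and can reach $0$ --- your claimed lower bound of $\frac{\pi}{2}$ is false, and ``perturbed at most twice'' is not correct. This does not hurt the statement (only a \emph{nonnegative} structure is claimed, and $t=\frac{\pi}{4}$ is precisely the extremal coefficient in the paper's count of at most four deformations per tetrahedron), but the justification needs the corrected count. Once the curves are fixed as above, your holonomy computation is fine: each $\gamma_{i,s}$ is homologous to $\pm$ the ladderpole slope, curves in other cusps do not contribute, and Lemma \ref{lemma:holonomy-intersection} gives $h_\theta(\delta_i)=2\cdot\frac{\pi}{4}\cdot\varepsilon_i k_i=\frac{\varepsilon_i k_i\pi}{2}$.
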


Recall that the space $\ang(\tau)$ of positive angle structures on $\tau$ is a convex polytope, whose closure $\overline{\ang(\tau)}$ contains every non-negative angle structure. Thus there are (strictly) positive angle structures with holonomy arbitrarily close to $\varepsilon_i k_i\pi / 2$, for $\varepsilon_i \in \{-1,1\}$. In fact, by convexity of $\overline{\ang(\tau)}$, one can actually take $\varepsilon_i\in [-1,1]$ in equation \eqref{eq:rung-holonomy}, getting a full Cartesian product $\Pi$ of possible holonomies $\left ( h_\theta(\delta_1), \dots, h_\theta(\delta_c)\right )$.

\medskip

A little more can be said if the triangulation of $M$ is transverse--taut. Recall from Observation \ref{obs:transverse} that ascending and descending ladders then alternate in each cusp of $M$, and that each truncated tetrahedron then has precisely two boundary triangles in ascending ladders, and two boundary triangles in descending ladders.

Therefore, if we take all curves $\gamma_i$ along ladders of the \emph{same} type (a notion consistent across all cusps), we can make sure each tetrahedron suffers at most two (not four) deformations, and therefore choose $\lambda_s=\frac{\pi}{2}$ instead of $\frac{\pi}{4}$ in the definition of the deformation $D$ above. However, we can no longer choose the ladder set independently inside each cusp. This is summarized in the following Proposition.

\begin{proposition} \label{prop:holo-2} Suppose $M$ has a veering, \emph{transverse}-taut triangulation with cusp tori $T_1, \dots, T_c$ carrying $2k_1, \dots, 2k_c$ ladders respectively. Take $\varepsilon \in \{-1,1\}$, and let $\delta_1, \dots, \delta_c$ be homology classes in $H_1(\partial \overline{M}, \mathbb{Z})\simeq \mathbb{Z}^{2c}$ that have intersection number $\varepsilon$ with the upwards--oriented ladderpoles in $T_1,\dots, T_c$ respectively.
Then, for any subset $J \subset \{1,\dots, c\}$, there exists a nonnegative angle structure $(\theta)$ such that
$$h_\theta(\delta_i)=\left \{ \begin{array}{ll} \varepsilon k_i\pi & \text{ if $i\in J$;}\\ 0 & \text{ otherwise.}\end{array} \right .$$
\end{proposition}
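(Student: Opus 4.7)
The strategy is to start from the taut angle structure, which has vanishing holonomy by Lemma \ref{lemma:taut-trivial}, and perturb by leading--trailing deformations supported in the cusps indexed by $J$. Using the transverse--taut hypothesis (Observation \ref{obs:transverse}), for each $i \in J$ choose $k_i$ normal closed curves $\gamma^{(i)}_1, \dots, \gamma^{(i)}_{k_i}$, one running through each ascending ladder of $T_i$ (if $\varepsilon = +1$), or through each descending ladder (if $\varepsilon = -1$). Orient each $\gamma^{(i)}_s$ so that whenever it traverses a triangle, it enters through a $0$-$\pi$ edge and exits through the $0$-$0$ edge, exactly as in the construction preceding Proposition \ref{prop:holo-1}. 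Then consider the generalized angle structure
\[
(\theta) \;=\; (\theta_{\text{taut}}) + \sum_{i \in J} \sum_{s=1}^{k_i} \tfrac{\pi}{2}\, D^{\gamma^{(i)}_s}.
\]

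The first step is to check that $(\theta) \in \overline{\ang(\tau)}$. Each traversal by some $\gamma^{(i)}_s$ of a cusp triangle decrements the $\pi$-angle variable of the ambient tetrahedron $\Delta$ by $\pi/2$ and increments one of its $0$-angle variables by $\pi/2$, so the $0$-angles only grow. Fix a tetrahedron $\Delta$. By Observation \ref{obs:transverse}, $\Delta$ contributes exactly two of its four cusp triangles to the union of ascending ladders (and the other two to the descending ones); depending on whether we chose ascending or descending ladders in defining the $\gamma^{(i)}_s$, only those two triangles of $\Delta$ can be crossed. Since distinct curves occupy distinct ladders, each such cusp triangle is crossed at most once, so the $\pi$-angle variable of $\Delta$ drops by at most $2 \cdot \tfrac{\pi}{2} = \pi$, which suffices to keep $(\theta)$ non-negative.

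The second step is the holonomy calculation via Lemma \ref{lemma:holonomy-intersection}. For $j \notin J$, no curve $\gamma^{(i)}_s$ meets $T_j$, so $h_\theta(\delta_j) = 0$ by linearity. For $j \in J$, each of the $k_j$ curves $\gamma^{(j)}_s$ is freely isotopic within $T_j$ to a ladderpole oriented in the direction dictated by $\varepsilon$; with the appropriate sign convention, $\iota(\gamma^{(j)}_s, \delta_j) = \varepsilon$ for every $s$. Summing over $s$ and using $h_{\text{taut}}(\delta_j) = 0$ gives
\[
h_\theta(\delta_j) \;=\; 2 \cdot \tfrac{\pi}{2} \cdot \varepsilon \cdot k_j \;=\; \varepsilon k_j \pi,
\]
as claimed.

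The main obstacle is the counting argument of Step 1, which relies crucially on the transverse--taut hypothesis. Without a globally coherent partition of each cusp's ladders into ascending and descending ones, a single tetrahedron might contribute cusp triangles to as many as four of the curves $\gamma^{(i)}_s$ simultaneously, forcing the smaller coefficient $\pi/4$ used in Proposition \ref{prop:holo-1} and yielding only half the holonomy achieved here.
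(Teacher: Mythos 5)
Your proposal is correct and follows essentially the same route as the paper: one curve per ladder of a single type chosen consistently across all cusps (using Observation \ref{obs:transverse} to see that each tetrahedron meets such curves in at most two of its cusp triangles), coefficient $\pi/2$ on each $D^{\gamma}$, and the holonomy computed from Lemmas \ref{lemma:taut-trivial} and \ref{lemma:holonomy-intersection}. The only hand-waving is in pinning down whether ascending or descending ladders give the sign $\varepsilon$, but your ``appropriate sign convention'' caveat is at the same level of precision as the paper's own treatment, and the argument is unaffected since toggling the ladder type toggles the sign.
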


In summary, Proposition \ref{prop:holo-1} states that the $c$-tuple $\left (h_\theta(\delta_1), \dots, h_\theta(\delta_c)\right )$ can take any value inside some  parallelepiped $\Pi$ centered around the origin of $\mathbb{R}^c$, and Proposition \ref{prop:holo-2} states that in the transverse--taut case, one pair of opposite octants of $\Pi$ can be further homothetized by a factor of 2. Therefore, in the transverse--taut case,  Proposition \ref{prop:holo-2} implies Proposition \ref{prop:holo-1} by averaging out. See Figure \ref{fig:octants}.

\begin{figure}[h!]
\begin{center}
\psfrag{p}{$\Pi$}
\psfrag{h1}{$h_\theta(\delta_1)$}
\psfrag{h2}{$h_\theta(\delta_2)$}
\includegraphics[width=8cm]{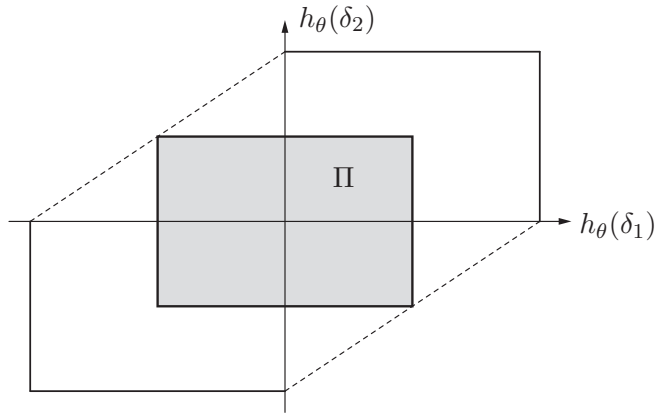}
\caption{Illustration of Propositions \ref{prop:holo-1} and \ref{prop:holo-2}.}
\label{fig:octants}
\end{center}
\end{figure}


\begin{example}

For once-punctured torus bundles, the bounds of Proposition \ref{prop:holo-2} can be seen to be optimal. However, we have found layered triangulations with two cusps, where all points provided by Proposition \ref{prop:holo-2} (for $J\neq 0$) belong to the boundary of the space of achievable holonomies, but where each individual $h_\theta(\delta_1)$ or $h_\theta(\delta_2)$ can take even larger values. 

Consider the punctured rectangle $$[0,2]\times[0,1]\smallsetminus \{0,1,2\}\times \{0,1\}$$ and identify opposite edges to get a twice-punctured torus $T$. The linear map 
$$\varphi_0:=
\begin{bmatrix}  1&1 \\  1&2 \end{bmatrix}^3
=\begin{bmatrix} 5&8 \\  8&13 \end{bmatrix} $$
 preserves $2\mathbb{Z}\oplus \mathbb{Z}$ and therefore induces a mapping class of $T$, still denoted $\varphi_0$ and preserving each puncture. The mapping torus $T\times [0,1]/\sim_{\varphi_0}$ is a 6-fold cover of the once-punctured torus bundle with monodromy $\left[ \begin{smallmatrix}  1&1 \\  1&2  \end{smallmatrix} \right]$ (the figure-8 knot complement) and as such receives a natural layered triangulation with 12 tetrahedra. We alter this construction by defining $$\varphi=\varphi_0 \circ \lambda~,$$ where $\lambda$ is a Dehn twist along the curve $\ell=\left \{\frac{3}{2} \right \}\times [0,1]$ of $T$. Since $\ell$ traverses just one tetrahedron, it turns out we can account for $\lambda$ by just one extra diagonal exchange, so $M:=T\times [0,1]/\varphi$ receives a natural layered (in fact, veering) triangulation with 13 tetrahedra. The number of pairs of annuli is $k_i=2$ for both cusps. Numerical computation shows that if $\delta_1, \delta_2$ are consistently oriented loops around the two punctures, then the space of pairs $(h_\theta(\delta_1), h_\theta(\delta_2))$, as $(\theta)$ runs over all nonnegative angle structures, is the convex hull $Q$ of the six points
$$\pm \left ( \frac{27\pi}{13}, \frac{\pi}{13}\right ), \quad \pm \left ( 2\pi, 2\pi \right ), \quad \pm \left ( \frac{\pi}{13}, \frac{27\pi}{13}\right ) .$$
In particular, the points $\pm (2\pi,2\pi)$, $\pm(0,2\pi)$, and $\pm(2\pi, 0)$ provided by Proposition \ref{prop:holo-2} all belong to the boundary of $Q$, but of course $\frac{27\pi}{13}>2\pi$.
\end{example}

\subsection{Exotic taut angle structures}

If one takes $J=\{1,\dots, c\}$ in Proposition \ref{prop:holo-2}, then every tetrahedron $\Delta$ of $M$ suffers \emph{precisely} two deformations, and these deformations are in the same direction. This is true because whatever the colors of the diagonals in Figure \ref{fig:onetetrahedron}, the triangles cut off at $A$ and $C$ (resp. $B$ and $D$) receive precisely the same colors on all their edges and vertices. As a result, after deformation with $\lambda_s\equiv \frac{\pi}{2}$, each tetrahedron $\Delta$ is flat again. Since we can choose $\varepsilon= 1$ or $\varepsilon= -1$  in Proposition \ref{prop:holo-2}, it follows that 

\begin{proposition}\label{prop:exotic}
Every veering, transverse--taut triangulation comes with at least two ``exotic'' taut angle structures, distinct from the given one (and usually not veering).
\end{proposition}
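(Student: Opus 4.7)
My plan is to apply Proposition \ref{prop:holo-2} with $J=\{1,\dots,c\}$ and the two sign choices $\varepsilon=\pm 1$, producing two non-negative angle structures $(\theta^+)$ and $(\theta^-)$. I will verify that each is in fact taut (all angles in $\{0,\pi\}$), and that both are distinct from the original veering taut structure and from each other.

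First, the tautness verification. Fix a tetrahedron $\Delta$, and let $A,B,C,D$ be its four truncation triangles, with $A,C$ in ascending ladders and $B,D$ in descending ladders by Observation \ref{obs:transverse}. When the curves $\gamma_s$ used in Proposition \ref{prop:holo-2} are placed in every other ascending ladder of each cusp, exactly two of them pass through $\Delta$, one crossing $A$ and the other crossing $C$. As observed in the paragraph preceding the statement, $A$ and $C$ carry identical color labels on all their edges and vertices; hence both are traversed in the combinatorially identical way (entering through a $0$--$\pi$ edge, exiting through the $0$--$0$ edge). Consequently the two deformations $D^{\gamma_s}$ act on the same pair of angle variables of $\Delta$ in the same direction: each shifts one variable by $+\pi/2$ and another by $-\pi/2$. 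With $\lambda_s=\pi/2$, summing yields a shift of $+\pi$ on one variable and $-\pi$ on another, while non-negativity is guaranteed by Proposition \ref{prop:holo-2}. Starting from the veering triple $(0,0,\pi)$, the only possibility consistent with non-negativity is that the $-\pi$ lands on the $\pi$-valued variable and the $+\pi$ lands on a $0$-valued one, so the new triple is again of the form $(0,0,\pi)$ with the wide edge-pair permuted. Since each $D^{\gamma_s}$ lies tangent to $GAS(\tau)$, equations \eqref{eq:tetrahedron} and \eqref{eq:edge} continue to hold, confirming that $(\theta^\pm)$ are genuine taut angle structures.

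Next, distinctness follows from the holonomy data. Proposition \ref{prop:holo-2} gives $h_{\theta^\pm}(\delta_i)=\pm\varepsilon k_i\pi$ on each cusp $T_i$, which is nonzero because $k_i\geq 1$. By Lemma \ref{lemma:taut-trivial} the original veering taut structure has zero angular holonomy on every cusp, so $(\theta^+)$ and $(\theta^-)$ are both distinct from the original, and they differ from each other by the opposite signs of holonomy on every cusp. The same holonomy comparison via Lemma \ref{lemma:taut-trivial} shows that neither $(\theta^+)$ nor $(\theta^-)$ can be veering, establishing the parenthetical assertion as well.

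The main obstacle is the combinatorial tracking in the second paragraph: one must confirm that the $-\pi/2$ shift produced by each single deformation really lands on the wide $\pi$ angle of $\Delta$, rather than on one of the thin $0$ angles (where it would create a negative angle, contradicting Proposition \ref{prop:holo-2}). This ultimately follows from the explicit coloring of Figure \ref{fig:onetetrahedron} together with the ``enter $0$--$\pi$ edge, exit $0$--$0$ edge'' traversal rule, but writing this out cleanly for every coloring of the two diagonals, and for both hinge and non-hinge tetrahedra, is the step where the bookkeeping must be watertight.
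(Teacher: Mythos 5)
Your proposal is correct and follows essentially the same route as the paper: take $J=\{1,\dots,c\}$ in Proposition \ref{prop:holo-2} with both choices of $\varepsilon$, observe that each tetrahedron receives exactly two deformations acting on the same pair of angle variables in the same direction (the $A$/$C$ color coincidence of Figure \ref{fig:onetetrahedron}), conclude the tetrahedra are flat again, and distinguish the resulting structures from the original (and from each other) via their nonzero angular holonomies together with Lemma \ref{lemma:taut-trivial}. The only differences are cosmetic --- you deduce that the $-\pi$ must land on the wide angle from nonnegativity rather than from the explicit ``enter a $0$--$\pi$ edge, exit the $0$--$0$ edge'' traversal rule, and you make explicit the holonomy argument for distinctness that the paper leaves implicit --- apart from the small slip that the curves run up \emph{all} ascending (or all descending) ladders, i.e.\ every other ladder, not ``every other ascending ladder.''
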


We have also checked that some, but not all, punctured-torus bundles can admit even more taut angle structures: this happens precisely when the monodromy, as a cyclic word in two letters $R= \left[ \begin{smallmatrix} 1 & 1 \\ 0 & 1 \end{smallmatrix} \right]$ and $L=\left[ \begin{smallmatrix} 1 & 0 \\ 1 & 1  \end{smallmatrix} \right]$, can be decomposed into a product of terms of the form $(RL^*R)(LR^*L)$, where the stars denote arbitrary nonnegative exponents.

\medskip

\subsection{Holonomy of the ladderpole direction} \label{sec:holonomy-poles}

We will find nonnegative angle structures $(\theta)$ that realize large values of $h_\theta([\ell])$, where $[\ell]\in H_1(\partial \overline{M}, \mathbb{R})$ is represented by a curve along the ladderpole direction. By the deformation formula of Lemma \ref{lemma:holonomy-intersection}, this will involve applying deformations $D^\gamma$ for  a curve $\gamma$ that intersects the ladderpoles essentially.

\begin{proposition} \label{prop:ladder-holonomy-1}
Let $M$ be a manifold with $c$ cusps, endowed with a veering triangulation $\tau$. Let $\ell_1, \dots, \ell_c$ be simple closed curves in $\partial \overline{M}$ along the ladderpole directions (with any orientations). Then there exists a nonnegative angle structure $(\theta)$ on $M$ such that $h_{\theta}([\ell_i])=\frac{\pi}{4}$ for all $1\leq i \leq c$.
\end{proposition}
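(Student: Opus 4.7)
The plan is to deform the veering taut angle structure $(\theta_{\text{taut}})$, which has $h_{\text{taut}} \equiv 0$ by Lemma \ref{lemma:taut-trivial}, via leading--trailing deformations along simple closed normal curves $\gamma_i \subset T_i$ transverse to the ladderpoles. By Lemma \ref{lemma:holonomy-intersection}, and since $\gamma_j$ is disjoint from $T_i$ for $j \neq i$, the structure $(\theta) = (\theta_{\text{taut}}) + t\sum_{i=1}^c D^{\gamma_i}$ satisfies $h_\theta(\ell_i) = 2t\cdot \iota(\gamma_i, \ell_i)$. Constructing $\gamma_i$ with $\iota(\gamma_i, \ell_i) = 1$ and setting $t = \pi/8$ then delivers the required value $h_\theta(\ell_i) = \pi/4$ in every cusp.

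For each cusp $T_i$, I would build $\gamma_i$ as a ``horizontal'' simple closed loop in the cusp view of Figure \ref{fig:cuspview}, crossing each of the $2k_i$ ladderpoles of $T_i$ exactly once. The crucial local requirement is that in every cusp triangle $T$ it traverses, $\gamma_i$ enters $T$ through an edge incident to the wide ($\pi$) vertex of $T$, and exits through the base edge opposite the wide vertex. Under this \emph{good traversal} property, each edge crossing raises one angle at a thin vertex from $0$ to $t$, and lowers the wide angle from $\pi$ to $\pi-t$, so no angle leaves the interval $[0,\pi]$. Since each tetrahedron edge has only two endpoints (each belonging to exactly one cusp triangle in one cusp), any given dihedral angle $\theta_e$ is affected at most twice by $\sum_i D^{\gamma_i}$, so the cumulative change is bounded by $2t=\pi/4$, keeping the deformed structure nonnegative when $t = \pi/8$.

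The main obstacle is the combinatorial construction of $\gamma_i$: at each shared edge $e$ between two triangles $T_1$ (exited) and $T_2$ (entered) visited by the curve, the good-traversal condition forces the wide vertex of $T_2$ to lie on $e$ while that of $T_1$ must lie off $e$. Using the wide/thin vertex structure developed in Section \ref{sec:observations}---in particular the coloring of vertices, the ladder decomposition, and the distinction between hinge and non-hinge triangles, which controls which endpoint of each edge carries a wide angle---one verifies that these local directional constraints can be patched consistently into a globally defined simple closed loop in each cusp, with the correct intersection number against $\ell_i$. The transverse--taut case benefits from the ascending/descending ladder alternation of Observation \ref{obs:transverse}, which provides a canonical orientation guiding the construction; the general case then follows by lifting to the double cover of Lemma \ref{lemma:transverse-cover}, constructing the angle structure upstairs, and averaging it down, exactly as in the proof of Theorem \ref{thm:bound}.
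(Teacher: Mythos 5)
Your reduction to Lemma \ref{lemma:holonomy-intersection} is the right framework, but the central construction fails: the ``good traversal'' condition you impose (enter through an edge incident to the wide vertex, exit through the $0$--$0$ edge opposite it) is incompatible with having $\iota(\gamma_i,\ell_i)\neq 0$. By the colour rules of Section \ref{sec:observations}, the two thin vertices of any cusp triangle have different colours (blue on the left of the $\pi$--angle, red on the right), so the edge joining them --- your prescribed exit edge --- is always a \emph{rung}, never a ladderpole segment. A curve that always exits through this edge therefore never crosses a ladderpole, stays inside one annulus $\lad$, and has zero intersection with the ladderpole slope; conversely, any curve with $\iota(\gamma_i,\ell_i)=1$ must at some point exit a triangle across a ladderpole segment, and since the angle opposite a ladderpole segment is a \emph{thin} ($0$) angle of the taut structure, the deformation $tD^{\gamma_i}$ drives that angle to $-t$ at each pole crossing. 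So deforming the veering taut structure directly along a ladderpole-crossing curve cannot stay nonnegative, and this is precisely why the paper's proof first applies Lemma \ref{lemma:hinge-rescue} at the extremal value $t=\pi/4$: in the resulting structure the hinge triangles have angles $(0,\tfrac{\pi}{2},\tfrac{\pi}{2})$ with the zero at the wide corner, so a ``horizontal'' curve can cross the ladderpoles \emph{at hinge triangles} (and travel through the fans in the direction opposite to Lemma \ref{lemma:nonhinge-rescue}) while only ever decreasing angles that are already positive; the coefficient $\pi/8$ and the fact that each cusp triangle is crossed at most once then give nonnegativity and $h_{\theta'}([\ell_i])\geq \pi/4$.

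Two secondary points. First, your bound ``any given dihedral angle is affected at most twice'' is miscounted: an angle $\theta_j$ is assigned to a \emph{pair} of opposite edges and appears once in each of the four cusp triangles of its tetrahedron, so it can be hit up to four times (and you have not shown that a single $\gamma_i$ crosses each cusp triangle at most once, which also requires an argument about how the curve closes up); at $t=\pi/8$ the corrected count $4t=\pi/2$ would still be harmless for $\pi$--angles, so this alone is not fatal, but it needs fixing. Second, the appeal to the transverse--taut double cover of Lemma \ref{lemma:transverse-cover} is both unsupported (averaging a structure built upstairs changes the holonomy of $\ell_i$ in a way you do not control, since lifts of $\ell_i$ may be single or double covers of $\ell_i$) and unnecessary: as in the paper, choosing an orientation of $\ell_i$ in each cusp already splits the ladders of that cusp into ascending and descending ones, which is all the construction needs, with no transverse--taut hypothesis.
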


\begin{proposition} \label{prop:ladder-holonomy-2}
Suppose in addition that the triangulation of $M$ is transverse--taut and that the orientations of the curves $\ell_i$ all agree (or all disagree) with the transverse--taut structure. Then for any subset $J$ of $\{1,\dots,c\}$ there exists a nonnegative angle structure $(\theta)$ such that $h_\theta([\ell_i])=\frac{\pi}{2}$ if $i\in J$ and $0$ otherwise.
\end{proposition}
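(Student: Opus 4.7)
Mirroring the passage from Proposition~\ref{prop:holo-1} to Proposition~\ref{prop:holo-2}, we will upgrade Proposition~\ref{prop:ladder-holonomy-1} by using the transverse--taut hypothesis to double the deformation coefficient. Starting from the veering taut structure, which has zero holonomy by Lemma~\ref{lemma:taut-trivial}, for each $i \in J$ we choose a normal simple closed curve $\gamma_i \subset T_i$ representing the primitive homology class dual to $[\ell_i]$ (a ``rung-direction'' curve), oriented so that $\iota([\gamma_i],[\ell_i]) = +1$ (reverse all orientations if the $\ell_i$ were oriented against the transverse--taut structure). The structure will be
\[
 (\theta) \: := \: (\theta)_{\text{taut}} + \sum_{i \in J} \tfrac{\pi}{4}\, D^{\gamma_i},
\]
so that Lemma~\ref{lemma:holonomy-intersection} immediately gives $h_\theta(\ell_i) = 2 \cdot \tfrac{\pi}{4} = \tfrac{\pi}{2}$ for $i \in J$ and $h_\theta(\ell_i) = 0$ for $i \notin J$ (since $\gamma_j \subset T_j$ and $\ell_i \subset T_i$ lie in disjoint boundary tori when $i \neq j$).

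\textbf{Construction of $\gamma_i$ and nonnegativity.} The substance of the argument is to arrange $\gamma_i$ so that the deformation stays inside $\overline{\ang(\tau)}$. We exploit Observation~\ref{obs:transverse}: each tetrahedron $\Delta$ has exactly two cusp triangles in ascending ladders and two in descending ladders, with matching color/angle data between the pairs $\{A,C\}$ and $\{B,D\}$ of Figure~\ref{fig:onetetrahedron}. This structural symmetry is what allows us to use coefficient $\pi/4$ (rather than $\pi/8$ as in Proposition~\ref{prop:ladder-holonomy-1}): we route $\gamma_i$ so that, within each ladder visit, the curve exits every cusp triangle opposite the wide angle (a ``safe'' $\pi$-decreasing exit, analogous to the rescuing curves of Lemma~\ref{lemma:nonhinge-rescue}), and so that the unavoidable ``bad'' $\pi$-increase at re-entry across each ladderpole edge is absorbed by a safe exit in the neighbouring triangle. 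Under this construction, each angle of any $\Delta$ is moved by at most $2 \cdot \pi/4 = \pi/2$: a $\pi$-angle decreases to at least $\pi/2$, and a $0$-angle rises to at most $\pi/2$. The consistency of the sign $\varepsilon$ across cusps guarantees that the local contributions in different cusps are globally compatible and do not inadvertently cancel or reinforce beyond the prescribed value $\pi/2$.

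\textbf{Main obstacle.} The most delicate step is the explicit combinatorial construction of $\gamma_i$ and the verification that, for every tetrahedron $\Delta$ whose vertices are distributed among the cusps $T_i$ (possibly with multiplicity), the combined effect of all crossings moves each angle by at most $\pi/2$ in absolute value. Unlike the ladderpole-direction curves in Proposition~\ref{prop:holo-2}, which stay within one ladder and can be restricted to a single ladder type across the whole triangulation, the rung-direction curve $\gamma_i$ must traverse every one of the $2k_i$ ladders of $T_i$; at each of its $2k_i$ ladderpole crossings it enters the next ladder's triangle opposite a wide angle, which is the ``bad'' direction and naively would push a $0$-angle below zero. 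The transverse--taut mirror symmetry between the ascending pair $\{A,C\}$ and the descending pair $\{B,D\}$ of each tetrahedron is what permits a pairing of these crossings so that ``bad'' contributions at one cusp triangle of $\Delta$ are immediately compensated by ``good'' contributions at an adjacent cusp triangle of the same $\Delta$. Carrying this bookkeeping through carefully—very much in the spirit of the combinatorial analysis in the proof of Proposition~\ref{prop:holo-2}—is the technical heart of the argument.
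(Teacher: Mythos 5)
There is a genuine gap, and it occurs at the very first step: you deform the \emph{taut} structure $(\theta)_{\text{taut}}$ directly by $\sum_{i\in J}\frac{\pi}{4}D^{\gamma_i}$. The paper's proof (and the proof of Proposition \ref{prop:ladder-holonomy-1}, on which it builds) instead starts from the structure produced by Lemma \ref{lemma:hinge-rescue} with the extremal value $t=\frac{\pi}{4}$, under which hinge triangles have angles $(0,\frac{\pi}{2},\frac{\pi}{2})$ and non-hinge triangles have angles $(0,\pi,0)$ with the $\pi$ at the thin corner adjacent to the ladderpole. This preliminary step is not optional. A rung-direction curve must cross $2k_i$ ladderpoles, and each time it leaves a triangle $T'$ across a ladderpole edge it \emph{decreases} the angle of $T'$ opposite that edge, which is a thin angle --- equal to $0$ in the taut structure. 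Your claim that this ``bad'' contribution is ``absorbed by a safe exit in the neighbouring triangle'' does not work: the neighbouring triangle belongs to a different tetrahedron, so the safe exit acts on a different angle variable and cannot compensate the one driven negative. The paper avoids the problem by routing $\gamma_i$ so that it crosses between ladders \emph{only at hinge triangles}, whose relevant angle is $\frac{\pi}{2}$ after the rescue, and by travelling the ladders in the direction (opposite to that of Lemma \ref{lemma:nonhinge-rescue}) in which the exited angle is the $\pi$ of a non-hinge triangle; with that starting structure and routing, $D^{\gamma_i}$ only ever decreases positive angles.

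Your quantitative bookkeeping is also not the right one, and it is precisely where the transverse--taut hypothesis enters. You assert that every angle of every tetrahedron is moved by at most two crossings, so that $\pi$-angles stay $\geq\frac{\pi}{2}$ and $0$-angles rise to at most $\frac{\pi}{2}$; this is neither proved (you defer the ``technical heart'' explicitly) nor what actually happens. In the paper's argument the $\pi$-angle of a non-hinge tetrahedron may be decreased up to four times by $\frac{\pi}{4}$ (once per cusp triangle), landing exactly at $0$, which is fine; the delicate point is the hinge tetrahedra, whose positive angles are only $\frac{\pi}{2}$. There the transverse--taut structure is used to show that each $\frac{\pi}{2}$-angle is decreased at most \emph{twice}: hinges of one type are visited by $\bigcup_{i\in J}\gamma_i$ only when a curve leaves a ladder, hinges of the other type only when a curve enters one, and which of the two $\frac{\pi}{2}$-angles is decreased depends on whether the ladder is ascending or descending; since each hinge tetrahedron has exactly two tips in ascending and two in descending ladders (Observation \ref{obs:transverse}), each $\frac{\pi}{2}$-angle loses at most $2\cdot\frac{\pi}{4}$. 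Without the rescue step, the hinge-crossing trick, and this type-by-type count, the proposal does not establish nonnegativity; the holonomy computation via Lemma \ref{lemma:holonomy-intersection} is the only part that goes through as written.
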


Note that just as with Propositions \ref{prop:holo-1} and \ref{prop:holo-2}, there exists a \emph{positive} angle structure $(\theta) \in \ang(\tau)$ with holonomy arbitrarily close to the values specified above.

Again, Figure \ref{fig:octants} illustrates the relationship between Propositions \ref{prop:ladder-holonomy-1} and \ref{prop:ladder-holonomy-2}. For punctured torus bundles, it is easy to see that Proposition \ref{prop:ladder-holonomy-2} gives the optimal bound when the monodromy has the form $R^aL^b$ for positive integers $a,b$. However, for $R^{a_1}L^{b_1}\dots R^{a_s}L^{b_s}$ the optimal bound becomes $s\frac{\pi}{2}$.

\begin{proof}[Proof of Proposition \ref{prop:ladder-holonomy-1}]
We start by applying Lemma \ref{lemma:hinge-rescue} with the extremal value $t=\frac{\pi}{4}$. The angles of the resulting nonnegative angle structure $( \theta )$ are now as follows:
\begin{itemize}
\item Hinge triangles have angle $0$ at the wide angle, and $\frac{\pi}{2}$ elsewhere.
\item Nonhinge triangles have angle $\pi$ at the thin angle adjacent to the ladderpole segment, and $0$ elsewhere. 
\end{itemize}
On any cusp being considered, the chosen orientation of $\ell$, referred to as ``upward'', induces a partition of the $2k$ ladders into two classes: namely, call \emph{ascending} those ladders whose triangles have their wide angle \emph{above} the base for the orientation of $\ell$, and \emph{descending} the other ladders. (In the absence of a transverse--taut structure, this does not have to be consistent over all $c$ cusps: a tetrahedron may have all its boundary triangles in descending ladders.)

We want to apply the deformation $tD^\gamma$ to the angle structure $( \theta )$, for a carefully chosen curve $\gamma$. Focus on one cusp. Start $\gamma$ by crossing from an ascending ladder $\mathcal{L}$ to a descending one $\mathcal{L}'$, at a hinge of $\mathcal{L}$. Then, let $\gamma$ travel \emph{down} $\mathcal{L}'$ until it enters a hinge, then cross over to the next ascending ladder $\mathcal{L}''$. Let $\gamma$ travel \emph{up} $\mathcal{L}''$ until the first hinge, and so on. See Figure \ref{fig:horizontal-deformation}. Notice that ladders are travelled in the opposite direction compared to Lemma \ref{lemma:nonhinge-rescue}.

\begin{figure}[h!]
\begin{center}
\psfrag{g}{$\gamma$}
\psfrag{U}{$\begin{array}{c} \text{Ascending} \\ \text{ladder $\mathcal{L}''$} \end{array}$}
\psfrag{D}{$\begin{array}{c} \text{Descending} \\ \text{ladder $\mathcal{L}'$} \end{array}$}
\psfrag{DD}{$\begin{array}{c} \text{Descending} \\ \text{ladder} \end{array}$}
\includegraphics[width=12cm]{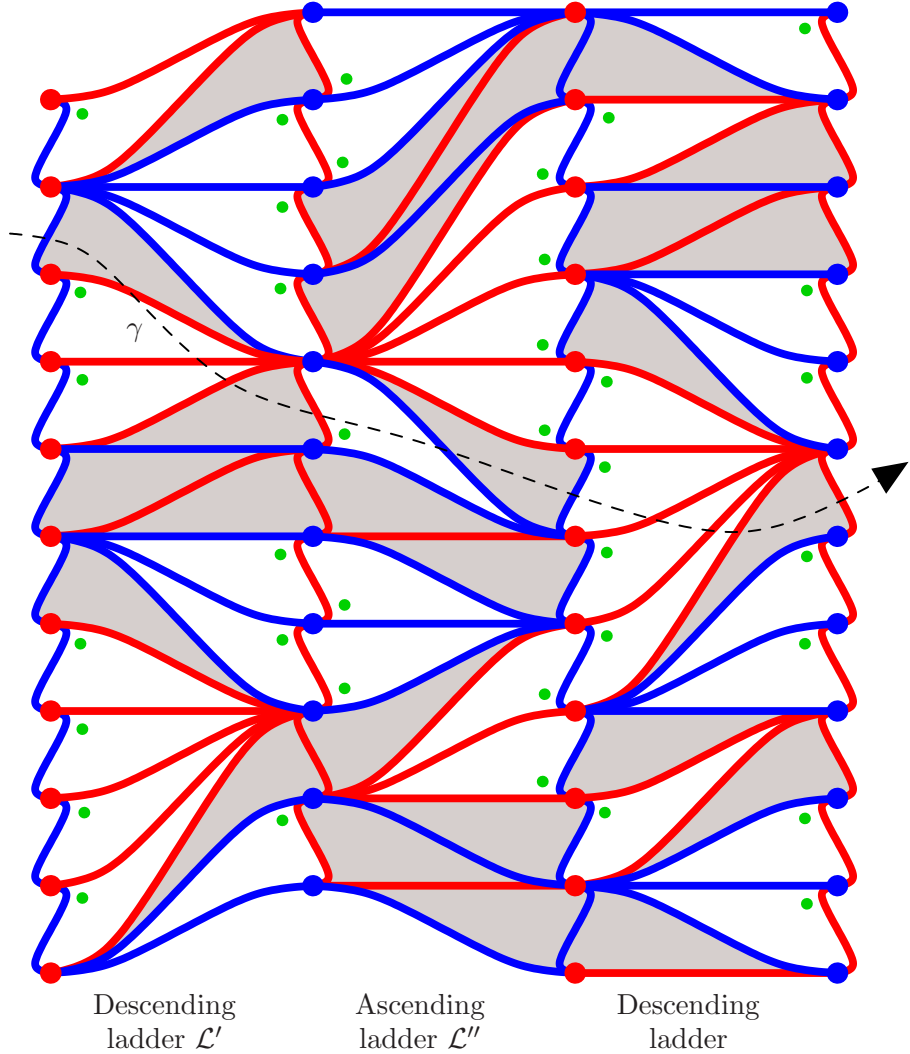}
\caption{The same triangulation as in Figure \ref{fig:cuspview}, with a horizontal deformation curve $\gamma$. Green dots indicate angles of $\pi$ in nonhinge triangles for $( \theta )$; hinge triangles have angles $0,\frac{\pi}{2}, \frac{\pi}{2}$ with the $0$ in the wide corner.}
\label{fig:horizontal-deformation}
\end{center}
\end{figure}

If we orient all ladderpoles consistently with $\ell$, then this curve $\gamma$ always intersects ladderpoles from the same side, and will eventually close up (possibly not at its starting point, in which case we just drop the initial dead arc of $\gamma$). It is easy to check that the  deformation $D^\gamma$ only decreases positive angles for $( \theta )$; moreover we can build one such curve $\gamma_i$ per cusp, crossing the ladderpoles in either direction. Since $\gamma$ always crosses the ladderpoles of each cusp in the same direction, 
$ \iota(\gamma, \ell_i) = m_i \geq 1.$

We claim that 
$$(\theta') = ( \theta )+\sum_{i=1}^c\frac{\pi}{8}D^{\gamma_i}$$ is a nonnegative angle structure.
To see this, just notice that each of the $4n$ triangles of $\partial \overline{M}$ is crossed at most once by the union of the $\gamma_i$. As a result, any angle that gets decreased (and was therefore at least $\frac{\pi}{2}$ for $( \theta )$) is decreased by at most $4\cdot \frac{\pi}{8}$, and thus stays nonnegative. 
By Lemma \ref{lemma:holonomy-intersection}, $h_{\theta'}([\ell_i])= \frac{\pi}{4}m_i$, completing the proof.
\end{proof}

\begin{proof}[Proof of Proposition \ref{prop:ladder-holonomy-2}]
If in addition $M$ is transverse--taut and the $\ell_i$ are consistently oriented, we claim that 
$$( \theta )+\sum_{i\in J} \frac{\pi}{4}D^{\gamma_i}$$ 
is still nonnegative. Of course, $\bigcup_{i\in J}\gamma_i$ still crosses each triangle of $\partial \overline{M}$ at most once.

The claim is easy to check for a nonhinge tetrahedron $\Delta$: the $\pi$-angle of $\Delta$ gets decreased at most four times by $\frac{\pi}{4}$ (once per cusp triangle of $\Delta$) while the other angles only get increased.

For $\Delta$ a hinge tetrahedron, we must discuss two possible cases. Note that (without loss of generality, up to exchanging colors), the curves $\gamma_i$ always cross from one ladder to the next at a hinge tetrahedron $\Delta$ with the upper diagonal red and the lower diagonal blue (this is true in Figure \ref{fig:horizontal-deformation}; recall the diagonals of a tetrahedron are seen as wide vertices of the corresponding cusp triangles). Call such tetrahedra \emph{hinges of type 1}. When $\gamma_i$ exits a \emph{descending} ladder, it decreases the $\frac{\pi}{2}$-angle \emph{clockwise} from the wide angle in a hinge of type 1. When $\gamma_i$ exits an \emph{ascending} ladder, it decreases the $\frac{\pi}{2}$-angle \emph{counterclockwise} from the wide angle in a hinge of type 1. Moreover, $\bigcup_{i\in J}\gamma_i$ visits hinges of type 1 \emph{only} as the $\gamma_i$ leave a ladder for an adjacent one. Therefore, since $\Delta$ has only two tips in ascending (resp. descending) ladders, each $\frac{\pi}{2}$-angle of $\Delta$ is decreased at most twice by $\frac{\pi}{4}$ and thus stays nonnegative.

For a hinge tetrahedron $\Delta'$ of the other type (upper diagonal blue and lower diagonal red), the argument is similar: cusp triangles of $\Delta'$ are only (possibly) visited by $\bigcup_{i\in J}\gamma_i$ as $\gamma_i$ \emph{enters} a new ladder (the rest of the time $\gamma_i$ travels up or down fans); for example in Figure \ref{fig:horizontal-deformation} the very first visible triangle crossed by $\gamma$ is such a hinge. Again, $\gamma_i$ decreases the $\frac{\pi}{2}$-angle of $\Delta'$ clockwise from the wide angle if it encounters $\Delta'$ in a descending ladder, and decreases the other $\frac{\pi}{2}$-angle of $\Delta'$ if it encounters $\Delta'$ in an ascending ladder. Since $\Delta'$ has exactly two cusp triangles in each type of ladder, $\Delta'$ is still nonnegative after applying $\sum_{i\in J}\frac{\pi}{4}D^{\gamma_i}$.
\end{proof}

\begin{remark}
A little more work would show that the slope of the curve $\gamma$ (like that of the ladderpoles) is also an invariant of the veering structure, \emph{i.e.}\ does not depend on the starting point and starting direction chosen to construct $\gamma$. In the layered (veering) case, it would be interesting to relate this slope to the one defined by the fiber.
\end{remark}

\bibliographystyle{hamsplain}
\bibliography{biblio}

\end{document}